\newtheorem{theorem}{Theorem}[section]
\newtheorem{lemma}[theorem]{Lemma}
\newtheorem{proposition}[theorem]{Proposition}
\newtheorem{corollary}[theorem]{Corollary}
\theoremstyle{definition}
\newtheorem{definition}[theorem]{Definition}
\newtheorem{example}[theorem]{Example}
\newtheorem{question}[theorem]{Question}
\newtheorem{remark}[theorem]{Remark}
\newcommand{\Z}{\mathbb{Z}}
\newcommand{\N}{\mathbb{N}}
\newcommand{\T}{\mathbb{T}}
\newcommand{\F}{\mathbb{F}}
\newcommand{\C}{\mathbb{C}}
\newcommand{\R}{\mathbb{R}}
\renewcommand{\L}{\mathcal{L}}
\title{$C^*$-irreducibility for reduced twisted group $C^*$-algebras}
\author{Erik B\'edos}
\address{Department of Mathematics\\University of Oslo\\NO-0316 Oslo\\Norway}
\email{bedos@math.uio.no}
\author{Tron Omland}
\address{Norwegian National Security Authority}
\email{tron.omland@gmail.com}
\date{\today}
\begin{document}

\maketitle

\begin{abstract}
We study $C^*$-irreducibility of inclusions of reduced twisted group $C^*$-algebras and of reduced group $C^*$-algebras.
We characterize  $C^*$-irreducibility in the case of an inclusion arising from a normal subgroup, and 
exhibit many new examples of $C^*$-irreducible inclusions.
\end{abstract}

\section{introduction}

Let $A$ be a unital $C^*$-algebra and 
 $B\subseteq A$ be a unital inclusion, i.e., $B$ is a $C^*$-subalgebra of $A$ containing the unit $1$ of $A$.
 A $C^*$-algebra $C$ such that
$B\subseteq C\subseteq A$ 
 is called an intermediate $C^*$-algebra of $B\subseteq A$.
Inspired by several previous works, including \cite{ILP}, \cite{AK} and \cite{AU}, 
R\o rdam  recently introduced \cite{Rordam} the notion of $C^*$-irreducibility for such inclusions of $C^*$-algebras: 
$B\subseteq A$ is said to be \emph{$C^*$-irreducible} 
if every intermediate $C^*$-algebra of $B\subseteq A$ is simple.
Besides giving an intrinsic characterization of $C^*$-irreducibility, R\o rdam presents in \cite{Rordam} several examples of $C^*$-irreducible inclusions arising in various settings, such as groups, dynamical systems, inductive limits and tensor products. New examples have since appeared in \cite{ER, KKLRU, KS}.
The related problem of determining all intermediate $C^*$-algebras of a given inclusion has attracted a lot of attention over the years, also in its von Neumann algebraic version. As a sample, we refer to \cite{Amrutam, CS, Cameron-Smith, Choda, ILP, Suz, Zac, Zsi}.

Some other properties of unital inclusions of $C^*$-algebras have also been studied. In \cite{Ursu}, Ursu 
says that $B \subseteq A$ is \emph{relatively simple}  if any unital completely positive map on $A$ which is a $*$-homomorphism on $B$ is faithful on $A$.
Every relatively simple inclusion is $C^*$-irreducible \cite[Proposition~3.6]{Ursu}. On the other hand, inspired by the terminology used for von~Neumann algebras, a unital inclusion $B\subseteq A$ of $C^*$-algebras is said to be \emph{irreducible} if $B' \cap A=\C1$.  Equivalently, as we will see later, this amounts to require that every intermediate $C^*$-algebra of $B\subseteq A$ has trivial center.
Every $C^*$-irreducible inclusion is irreducible, as explained in \cite[Remark~3.8]{Rordam}, but the converse does not hold in general.
Another property of a unital inclusion $B\subseteq A$
  that has been of interest in the past is the \emph{relative Dixmier property},
 which says that for any $a \in A$ the norm closure of the convex hull of $\{ uau^*: u \text{ unitary in } B\}$ contains a scalar (see \cite{Popa, Rordam} and references therein). It is stronger than $C^*$-irreducibility when $A$ has a faithful tracial state, and always stronger than irreducibility, cf.~\cite[Proposition~3.12]{Rordam}.

Now, let $G$ be a discrete group and $\sigma$ a (circle-valued) two-cocycle on $G$. One may then form the associated reduced twisted group $C^*$-algebra $C_r^*(G, \sigma)$ and the twisted group von Neumann algebra $\L(G, \sigma)$.  Our main focus in this article is to study $C^*$-irreducibility  of inclusions of the form $C^*_r(H,\sigma)\subseteq C^*_r(G,\sigma)$, where $H$ is a subgroup of $G$. A necessary condition for this to happen is clearly that both $C^*_r(H,\sigma)$ and $C^*_r(G,\sigma)$ are simple. 
When $\sigma$ is trivial, this means that $H$ and $G$ have to be $C^*$-simple.
However, despite the recent 
breakthroughs  \cite{BKKO, Haa, Ken} in the theory of $C^*$-simple groups,
the problem of determining when a reduced twisted group $C^*$-algebra is simple is more complicated,
see for example our discussion in \cite{SUT}. This indicates that obtaining an intrinsic characterization of the $C^*$-irreducibility of $C^*_r(H,\sigma)\subseteq C^*_r(G,\sigma)$ is probably also a challenging problem in general. 
Anyhow, when $H$ is normal in $G$, we are able to show in Theorem \ref{relative-simplicity} that $C^*_r(H,\sigma)\subseteq C^*_r(G,\sigma)$ is $C^*$-irreducible if and only if $C_r^*(H, \sigma)$ is simple and $(H\leq G, \sigma)$ satisfies the so-called \emph{relative Kleppner condition}. Moreover, if $H$ is FC-hypercentral or $C^*$-simple, then  we get that $C^*_r(H,\sigma)\subseteq C^*_r(G,\sigma)$ is $C^*$-irreducible if and only if  $(H\leq G, \sigma)$ satisfies the relative Kleppner condition, if and only if $C^*_r(H,\sigma)\subseteq C^*_r(G,\sigma)$ satisfies the relative Dixmier property. The relative Kleppner condition, which is of purely combinatorial nature, has its origin from Kleppner's work \cite{Kleppner} on factoriality of twisted group von Neumann algebras. As we show in Proposition \ref{kleppner-commutant}, 
it is equivalent to the irreducibility of  $C_r^*(H, \sigma) \subseteq C_r^*(G, \sigma)$ (resp.~$\L(H, \sigma) \subseteq \L(G, \sigma)$).
When $\sigma$ is trivial, this condition just says that $G$ is icc relatively to $H$, and we obtain in Theorem \ref{FCGH} that if $H$ is normal in $G$, then $C^*_r(H)\subseteq C^*_r(G)$ is $C^*$-irreducible if and only if $H$ is $C^*$-simple and the centralizer $C_G(H)$ of $H$ in $G$ is trivial. As shown by Ursu in \cite{Ursu}, this is in turn equivalent to $C_r^*(H) \subseteq C_r^*(G)$ being relatively simple.  Our Theorem \ref{FCGH} should be seen in light of \cite[Theorem 1.4]{BKKO}, which says that $G$ is $C^*$-simple if and only if both $H$ and $C_G(H)$ are $C^*$-simple. 

We pay some special attention to the case where $H$ is a normal subgroup which is \emph{prime}, in the sense that the FC-center of $H$ is torsion-free, cf.~Definition \ref{primedef}. Under this assumption, we characterize  when $(H \leq G, \sigma)$ satisfies Kleppner's condition in Theorem \ref{sigma-centralizer2}, and show in Corollary \ref{prime-irred} that $C_r^*(H, \sigma) \subseteq C_r^*(G, \sigma)$ is $C^*$-irreducible if and only if $C_r^*(H, \sigma)$ is simple and 
the \emph{twisted} centralizer $C^\sigma_G(H)$ of $H$ in $G$ is trivial. This last result takes an even simpler form if $H$ is also assumed to be FC-hypercentral, or if $H$ is $C^*$-simple, cf.~Corollary \ref{FC-C*-simple-sigma}.

When $H$ is a normal subgroup of $G$, our approach is to decompose $C_r^*(G, \sigma)$ as the reduced twisted $C^*$-crossed product 
of $C_r^*(H, \sigma)$ by a twisted action of the quotient group $G/H$, and combine this fact with a study of $C^*$-irreducibility for reduced twisted $C^*$-crossed products.  As a bonus,  the intermediate $C^*$-algebras of  the inclusion $C_r^*(H, \sigma) \subseteq C_r^*(G, \sigma)$ can be described by making use of Cameron and Smith's result \cite[Theorem~4.4]{Cameron-Smith} for simple reduced twisted $C^*$-crossed products. When $H$ is not normal in $G$, such a decomposition is not available, and we point out in~Remark \ref{counterexample} that Theorem \ref{relative-simplicity} does not necessarily hold in this case, even if $\sigma$ is trivial.

Our paper is organized as follows. Section \ref{prel} is devoted to some preliminary material on reduced twisted group $C^*$-algebras and twisted group von Neumann algebras. In the next section we discuss when a group $G$ is  icc relatively to a subgroup $H$ and introduce the relative Kleppner condition for $(H\leq G, \sigma)$, where $\sigma$ is a two-cocycle on $G$. 
 The main goal of Section \ref{irred-prim} is to show the equivalence of the relative Kleppner condition being satisfied and the irreducibility of the associated inclusions of twisted group algebras, as mentioned above. We also show that this is equivalent to the primeness of every intermediate $C^*$-algebra of  $C_r^*(H, \sigma) \subseteq C_r^*(G, \sigma)$. In Section \ref{irred-crpro} we first point out that R\o rdam's result \cite[Theorem 5.8]{Rordam}, which characterizes the $C^*$-irreducibility of the inclusion $A \subseteq A \rtimes_r G$, where $A \rtimes_r G$ is the reduced $C^*$-crossed product associated to an action of $G$ on a unital $C^*$-algebra $A$, is still valid in the case of a twisted action. Assuming that $H$ is a normal subgroup of $G$, we obtain in Theorem \ref{outer-quotient} a characterization of the $C^*$-irreducibility  of   $ A \rtimes_r H \subseteq  A \rtimes_r G$ for a twisted action. Section \ref{irred-rtgc} contains our characterization of the $C^*$-irreducibility of $C_r^*(H, \sigma) \subseteq C_r^*(G, \sigma)$, hence of $C_r^*(H) \subseteq C_r^*(G)$,
and some of the consequences that may be drawn from it.
We illustrate our findings in Section \ref{irred-examples}, where we present various new examples of $C^*$-irreducible inclusions, involving noncommutative tori, the discrete Heisenberg group, the braid group on infinitely many strands, and wreath products. In Section \ref{irred-trees}, we apply our results to produce  $C^*$-irreducible inclusions associated to groups acting on trees, e.g., amalgamated free products and HNN-extensions. 
Our final section is a sequel where we have included a related result  on the simplicity of $C_r^*(G, \sigma)$ in the presence of a normal subgroup.  

\section{Preliminaries}\label{prel}
Let $G$ be a discrete group with identity $e$. By a (normalized) \emph{two-cocycle on $G$} we will mean in this article a 
map $\sigma\colon G\times G\to\T$ satisfying
\[
\begin{gathered}
	\sigma(g,h)\sigma(gh,k)=\sigma(g,hk)\sigma(h,k), \\
	\sigma(g,e)=\sigma(e,g)=1
\end{gathered}
\]
for all $g,h,k\in G$. 
We assume throughout this paper that $G$ and  $\sigma$ are given. 
We recall that if $\sigma'$ is also a two-cocycle on $G$, then $\sigma'$ is said to be \emph{similar to $\sigma$} if there exists a function $\beta\colon G\to \T$ such that $\beta(e) = 1$ and \[\sigma'(r,s) = \beta(r)\beta(s)\overline{\beta(rs)} \sigma(r,s)\quad \text{ for all $r,s \in G$}.\] 
The (left) \emph{regular $\sigma$-projective representation} $\lambda_\sigma\colon G\to B(\ell^2(G))$ is defined by
\[
\lambda_\sigma(g)\xi(h)=\sigma(g,g^{-1}h)\xi(g^{-1}h)
\]
for $g, h\in G$ and $\xi\in \ell^2(G)$. 
The \emph{reduced twisted group $C^*$-algebra $C^*_r(G,\sigma)$} is the $C^*$-subalgebra of $B(\ell^2(G))$ generated by $\lambda_\sigma(G)$,
and the \emph{twisted group von~Neumann algebra $\L(G,\sigma)$} is the von~Neumann subalgebra of $B(\ell^2(G))$ generated by $\lambda_\sigma(G)$. The unit in both these algebras is the identity operator on $\ell^2(G)$.
If $\sigma'$ is a two-cocycle on $G$ which is similar to $\sigma$ via a map $\beta\colon G\to \T$, then, as is well-known and easy to check, $C_r^*(G, \sigma')$ (resp.~$\L(G,\sigma')$) is $*$-isomorphic to $C_r^*(G,\sigma)$ (resp.~$\L(G,\sigma)$) via a map $\Phi$ sending $\lambda_{\sigma'}(g)$ to $\beta(g)\lambda_\sigma(g)$ for each $g\in G$. 
There is a canonical faithful tracial state $\tau$ on $\L(G, \sigma)$, hence also on $C^*_r(G,\sigma)$, namely the restriction of the vector state on $B(\ell^2(G))$ associated to the characteristic function $\delta_e$ of $\{e\}$ in $G$.

Let the map  $\widetilde{\sigma}\colon G\times G \to \T$ be defined by
\[
\widetilde{\sigma}(h, g)=\sigma(h, g)\overline{\sigma(hgh^{-1},h)},
\]
 Then a simple computation gives that
 \[\lambda_\sigma(h)\lambda_\sigma(g)\lambda_\sigma(h)^* =\widetilde{\sigma}(h,g) \lambda_\sigma(hgh^{-1})\] 
  for all $h,g\in G$.
 Some further computations using the cocycle identity give that for all $r,s, t\in G$ we have:
\begin{gather}
\widetilde{\sigma}(rs,t)=\widetilde{\sigma}(r,sts^{-1})\widetilde{\sigma}(s,t) \label{left-product} \\
\widetilde{\sigma}(r,st)=\overline{\sigma(s,t)}\sigma(rsr^{-1},rtr^{-1})\widetilde{\sigma}(r,s)\widetilde{\sigma}(r,t) \label{right-product}
\end{gather}
These identities are similar to those  stated in \cite[(1.1)-(1.2)]{PR}, but beware that  
our definition of $\widetilde{\sigma}$ is different from theirs:   
if $\widetilde{\sigma}_\textup{PR}$ denotes this function from \cite{PR},
then $\widetilde{\sigma}(r,s)=\widetilde{\sigma}_\textup{PR}(s,r^{-1})$. From \eqref{left-product} and \eqref{right-product} we immediately get
\begin{gather}
\widetilde{\sigma}(rs,t)=\widetilde{\sigma}(r,t)\widetilde{\sigma}(s,t) \quad  \text{ whenever $st =ts$},\label{left-product2} \\
\quad \quad \quad \widetilde{\sigma}(r,st)=\widetilde{\sigma}(r,s)\widetilde{\sigma}(r,t) \quad \text{ whenever $rs =sr$ and $rt = tr$}. \label{right-product2}
\end{gather}

As usual, when $\sigma$ is trivial (i.e., $\sigma(g,h) =1$ for all $g,h\in G$), we skip it in our notation and from our terminology. So, for example, $\L(G)$ denotes the group von Neumann algebra of $G$.
As is well-known, $\L(G)$ is a factor if and only if $G$ is icc, i.e., every nontrivial conjugacy class in $G$ is infinite.
The twisted version of this fact, which is due to Kleppner \cite{Kleppner}, is as follows. 
An element $g\in G$ is called \emph{$\sigma$-regular} if $\sigma(g,h)=\sigma(h,g)$ whenever $h \in G$ and $gh=hg$. As $\sigma$-regularity is a property of conjugacy classes, it makes sense to  
say that $(G,\sigma)$ satisfies \emph{Kleppner's condition} if there is no nontrivial finite $\sigma$-regular conjugacy class in $G$. As shown in \cite{Kleppner}, $\L(G, \sigma)$ is a factor if and only if $(G,\sigma)$ satisfies Kleppner's condition. 
It can also be shown that this is equivalent to $C^*_r(G,\sigma)$ having a trivial center
(see \cite[Theorem~2.7]{primeness}).

The pair $(G,\sigma)$ is said to be \emph{$C^*$-simple} (resp.~to have \emph{the unique trace property}) when $C^*_r(G,\sigma)$ is simple (resp.~has a unique tracial state). Any pair satisfying one of these conditions must necessarily satisfy Kleppner's condition, but the converse implications do not always hold. If $G$ is $C^*$-simple, then, as shown in \cite{BK}, $(G, \sigma)$ is $C^*$-simple and has the unique trace property. We refer to \cite{SUT} for other examples of pairs satisfying one or both of these conditions. 

A large class of amenable groups for which $C^*$-simplicity (resp.~the unique trace property) of $(G,\sigma)$ is known to be equivalent to Kleppner's condition being satisfied is the class of FC-hypercentral groups, i.e., groups having no icc quotient group other than the trivial one, cf.~\cite{FCH} and references therein. This class contains all abelian groups, all FC-groups, all finitely generated groups having polynomial growth and, more generally, all virtually nilpotent groups. Countable FC-hypercentral groups have recently been characterized as groups with the Choquet-Deny property \cite{FHTF}, and as strongly amenable groups \cite{FTF}. 

\section{The relative Kleppner condition} \label{rKc}

Let $H$ be a subgroup of $G$. We will often write $H\leq G$ to indicate this.
The \emph{$H$-conjugacy class} of $g$ in $G$ is defined as \[g^H:=\{ hgh^{-1} : h\in H\}.\]
Thus $g^H$ is the orbit of $g$ under the action of $H$ on $G$ by conjugation, and we have that $\lvert g^H\rvert = \big[H: C_H(g)\big]$, where $C_H(g) := \{ h \in H : hg = gh\}$ is the centralizer of $g$ in $H$. 

We recall that $G$ is said to be an \emph{icc group relatively to $H$} if every nontrivial $H$-conjugacy class in $G$  is infinite. 
We will denote the centralizer of $H$ in $G$ by $C_G(H)$. 
Thus, $C_G(H)=\{ g\in G : \lvert g^H\rvert =1 \}$.
Similarly, we define the \emph{FC-centralizer of $H$ in $G$} as the subgroup of $G$ given by  
\[
FC_G(H)=\{ g\in G : \lvert g^H\rvert <\infty \}.
\]
Clearly, $C_G(H) \subseteq FC_G(H)$, and saying that $G$ is icc relatively to $H$ means precisely that $FC_G(H)$ is trivial. 

\begin{example}
Let $H, K$ be groups and assume that $\alpha\colon K \to \operatorname{Aut}(H)$ is an action of $K$ on $H$ by automorphisms. Let $G= H \rtimes K$ denote the associated semidirect product. As usual, we consider $H$ as a normal subgroup of $G$ and $K$ as a subgroup of $G$, so that $H\cap K = \{e\}$ and $\alpha_k(h)= k h k ^{-1}$ for all $ h\in H$, $k \in K$. Then we have that $G$ is icc relatively to $H$ if and only if $H$ is icc and $\alpha$ is outer, i.e., $\alpha_k$ is an outer automorphism of $H$ for each $k\in K\setminus \{e\}$. 
Indeed, if $H$ is icc and $\alpha$ is outer, then using \cite[Lemma 3.4]{Bedos91} it is not difficult to deduce that  $G$ is icc relatively to $H$. Alternatively, one may check that $C_G(H)$ is trivial and apply Proposition \ref{icc-rel}. The converse implication is straightforward.
On the other hand,  $G$ is icc relatively to $K$ if and only if $K$ is icc and $\{\alpha_k(h): k \in K\}$ is infinite for every $h\in H\setminus \{e\}$. The reader should have no trouble in verifying 
this assertion.
\end{example}
 
 \begin{lemma}\label{icc-normal}
Assume $H$ is icc and normal in $G$. Then $FC_G(H) = C_G(H)$. 
\end{lemma}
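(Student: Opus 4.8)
The inclusion $C_G(H) \subseteq FC_G(H)$ holds for any subgroup, so the plan is to establish the reverse inclusion: every $g \in FC_G(H)$ already lies in $C_G(H)$, i.e.\ commutes with all of $H$. Fix such a $g$; by definition $g^H$ is finite, equivalently $C_H(g)$ has finite index in $H$ (using $\lvert g^H\rvert = [H:C_H(g)]$). Since $H$ is normal in $G$, conjugation by $g$ restricts to an automorphism $\phi$ of $H$, and I want to show $\phi=\mathrm{id}_H$. To this end I fix an arbitrary $h\in H$ and set $a:=ghg^{-1}h^{-1}=\phi(h)h^{-1}\in H$; since $H$ is icc, it will suffice to prove that the $H$-conjugacy class of $a$ is finite, for then $a=e$, that is, $ghg^{-1}=h$.

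The first key observation is that the \emph{displacement} map $y\mapsto \phi(y)y^{-1}=gyg^{-1}y^{-1}=g\,(ygy^{-1})^{-1}$ takes only finitely many values: as $y$ ranges over $H$, the element $ygy^{-1}$ ranges over the finite set $g^H$, so the image of this map is contained in the finite set $\{g z^{-1} : z\in g^H\}$. The second observation is a commutator identity: for $x\in C_H(g)$, commuting $x$ past $g$ and $g^{-1}$ gives
\[
xax^{-1}=g\,(xhx^{-1})\,g^{-1}(xhx^{-1})^{-1}=\phi(xhx^{-1})(xhx^{-1})^{-1}.
\]
Combining the two, the $C_H(g)$-conjugacy class $\{xax^{-1}:x\in C_H(g)\}$ is contained in the finite set $\{gz^{-1}:z\in g^H\}$, hence is finite.

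Finally, since $C_H(g)$ has finite index in $H$, the full $H$-conjugacy class of $a$ is the union, over finitely many coset representatives $x_1,\dots,x_m$ of $C_H(g)$ in $H$, of the sets $x_i\{xax^{-1}:x\in C_H(g)\}x_i^{-1}$, each a conjugate of the finite $C_H(g)$-conjugacy class of $a$; thus the $H$-conjugacy class of $a$ is finite. The icc hypothesis on $H$ then forces $a=e$, i.e.\ $ghg^{-1}=h$, and since $h\in H$ was arbitrary we conclude $g\in C_G(H)$.

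The conceptual content is simply that finiteness of the orbit $g^H$ controls the displacement $\phi(y)y^{-1}$, and that finite index lets one pass from $C_H(g)$-conjugacy to $H$-conjugacy. I expect the only delicate point to be the verification of the commutator identity $xax^{-1}=\phi(xhx^{-1})(xhx^{-1})^{-1}$ for $x\in C_H(g)$, but this is a routine manipulation relying on nothing beyond the fact that $x$ commutes with $g$.
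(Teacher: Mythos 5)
Your proof is correct, but it takes a different route from the paper's. The paper's argument is a three-line affair that leverages two structural facts: (a) $FC_G(H)$ is a \emph{subgroup} of $G$ (stated in Section 3 of the paper as part of the definition), and (b) icc of $H$ forces $FC_G(H)\cap H=\{e\}$, since that intersection is exactly the FC-center of $H$. Given $g\in FC_G(H)$ and $k=hgh^{-1}\in g^H$, the paper notes $k\in FC_G(H)$ (same $H$-class as $g$), hence $g^{-1}k\in FC_G(H)$ by the subgroup property, while $g^{-1}k\in H$ by normality; so $g^{-1}k=e$ and $\lvert g^H\rvert=1$. You instead work elementwise: for each $h\in H$ you show the commutator $a=ghg^{-1}h^{-1}\in H$ has finite $H$-conjugacy class, via the observation that the displacement map $y\mapsto gyg^{-1}y^{-1}$ has image inside the finite set $\{gz^{-1}:z\in g^H\}$, combined with the finite index of $C_H(g)$ in $H$ to pass from $C_H(g)$-conjugates to $H$-conjugates; icc then kills $a$. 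Both proofs ultimately use icc to annihilate a commutator of $g$ with elements of $H$, but where the paper invokes the subgroup property of $FC_G(H)$ wholesale, your displacement-plus-finite-index computation essentially re-proves, by hand, the one instance of that property that is needed. The trade-off: the paper's proof is shorter modulo a standard fact, while yours is fully self-contained and makes the finiteness mechanism explicit.
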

\begin{proof}
Since $H$ is icc, we have  that $FC_G(H)\cap H=\{e\}$. Let $g\in FC_G(H)$ and $k \in g^H$. Then $k^H= g^H$, so $k\in FC_G(H)$. Hence $g^{-1}k \in FC_G(H)$. 
But, since $H$ is normal, $g^{-1}k \in H$. Thus, $g^{-1}k = e$, i.e., $k=g$. This means that $\lvert g^H\rvert = 1$, i.e., $g\in C_G(H)$. 
 \end{proof}
Using this lemma, we immediately get:
 \begin{proposition}\label{icc-rel}
 If $H$ is normal in $G$, then $G$ is icc relatively to $H$ if and only if $H$ is icc and $C_G(H)$ is trivial. 
 \end{proposition}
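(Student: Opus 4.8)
The plan is to use the reformulation noted just before the statement, that $G$ is icc relatively to $H$ precisely when $FC_G(H)$ is trivial, and then to read off both implications from this together with Lemma \ref{icc-normal}.

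For the forward implication I would assume $FC_G(H) = \{e\}$ and argue that each of the two conclusions drops out directly, without even invoking normality. Since $C_G(H) \subseteq FC_G(H)$ always holds, triviality of $FC_G(H)$ immediately forces $C_G(H)$ to be trivial. For the icc property of $H$, the key observation is that for $h \in H$ its $H$-conjugacy class $h^H = \{khk^{-1} : k \in H\}$ coincides with the ordinary conjugacy class of $h$ inside $H$; hence if some nontrivial $h \in H$ had a finite conjugacy class in $H$, then $\lvert h^H \rvert < \infty$ would place $h$ in $FC_G(H)$, contradicting $FC_G(H) = \{e\}$. Thus every nontrivial conjugacy class of $H$ is infinite, i.e.\ $H$ is icc.

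For the converse implication, this is where normality enters through the lemma. Assuming $H$ is icc and $C_G(H)$ is trivial, the hypotheses of Lemma \ref{icc-normal} are met (namely $H$ icc and normal in $G$), so it yields $FC_G(H) = C_G(H) = \{e\}$, whence $G$ is icc relatively to $H$.

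I do not anticipate a genuine obstacle here: the proposition is essentially a repackaging of Lemma \ref{icc-normal}, with normality used only in the converse direction. The one point requiring a moment's care is the identification of $h^H$ with the conjugacy class of $h$ in $H$, which is what lets the icc statement for $H$ be phrased interchangeably in terms of $H$-conjugacy classes of elements of $H$.
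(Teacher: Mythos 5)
Your proposal is correct and is exactly the paper's argument: the paper deduces the proposition immediately from Lemma \ref{icc-normal}, with the forward implication following directly from triviality of $FC_G(H)$ (which needs no normality) and the converse being precisely the content of that lemma. The identification of $h^H$ with the conjugacy class of $h$ in $H$, which you rightly flag, is the only detail the paper leaves implicit.
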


The condition $FC_G(H)= C_G(H)$ is trivially satisfied when 
$H$ is contained in the center of $G$.
It is also satisfied in some other situations.
\begin{example}\label{R-g}
Let $H\leq G$ and assume that $H$ has no nontrivial finite quotient, or that $H$ is an $R$-group (meaning that if $h_1, h_2 \in H$ are such that $h_1^n=h_2^n$ for some $n\in \N$, then $h_1=h_2$). Then $FC_G(H)=C_G(H)$. 

Indeed, suppose (for contradiction) that $FC_G(H)\neq  C_G(H)$. Then there exists $g\in G\setminus \{e\}$ such that
\begin{equation}\label{finiteCC} 
1<\lvert g^H\rvert=[H:C_H(g)]<\infty.
\end{equation}
Thus,  $H$ has a proper subgroup of finite index, and therefore a normal proper subgroup of finite index (just take the normal core). Hence $H$ has a nontrivial finite quotient.
Moreover, \eqref{finiteCC} gives that there must exist some $h\in H\setminus \{e\}$ such that $hgh^{-1}\neq g$. 
It also implies that we must have $h^ngh^{-n}=g$ for some $n\in \N$,
that is $h^n=gh^ng^{-1}=(ghg^{-1})^n$. So $H$ is not an R-group.

 All torsion-free, locally nilpotent groups are known to be $R$-groups (see for example \cite[2.1.2]{LR}). It is also known that every group having a bi-invariant total order is an $R$-group (see e.g.~\cite[Lemma 7.7]{KT}). Combining this with \cite[Theorem 7.8]{KT} one deduces that  all pure braid groups are  also $R$-groups.
 
\end{example}

The following definition was given in \cite{SUT} in the case where $H$ is normal, but it makes sense without this assumption.
\begin{definition}
An element $g\in G$ is said to be \emph{$\sigma$-regular w.r.t.~$H$} if $\sigma(g,h)=\sigma(h,g)$ whenever $h\in H$ and $gh=hg$.
\end{definition}
In other words, $g\in G$ is $\sigma$-regular w.r.t.~$H$ if and only if $\widetilde{\sigma}(h,g) =1$ (resp.~$\widetilde{\sigma}(g,h) =1$)  whenever $h\in H$ and $gh=hg$. 
It follows from the argument given in the proof of \cite[Lemma~4.4]{SUT} that 
$\sigma$-regularity w.r.t.~$H$ is a property of $H$-conjugacy classes.
Also, if $\sigma'$ is a two-cocycle on $G$ which is similar to $\sigma$, then it is easy to check that $g\in G$ is $\sigma$-regular w.r.t.~$H$ if and only if it is $\sigma'$-regular w.r.t.~$H$. The twisted analogue of $G$ being icc relatively to $H$ is as follows.
\begin{definition}\label{rel K}
Let $H\leq G$.Then $(H \leq G, \sigma)$ is said to satisfy the \emph{relative Kleppner condition} if every nontrivial $H$-conjugacy class in $G$ that is $\sigma$-regular w.r.t.~$H$ is infinite.
\end{definition}
Clearly, this property depends on $\sigma$ only up to similarity, and   
$(H \leq G, \sigma)$ automatically satisfies the relative Kleppner condition whenever $G$ is icc relatively to $H$. Also, if $(H \leq G, \sigma)$
satisfies the relative Kleppner condition, then both $(H, \sigma)$ and $(G, \sigma)$ satisfy Kleppner's condition. The converse implication does not necessarily hold, even if $H$ is normal in $G$ and $\sigma$ is trivial. Indeed, if $G= H\times K$ where both $H$ and $K$ are icc groups and $K$ is nontrivial, then $G$ is icc, but not icc relatively to $H$. 

\begin{remark}
The reader should be aware of the discrepancy between Definition \ref{rel K} and 
the relative Kleppner condition introduced in 
\cite[Definition~4.5]{SUT}, where only $H$-conjugacy classes in $G\setminus H$ are involved;
the difference is whether or not the assumption that $(H,\sigma)$ satisfies Kleppner's condition
is part of the definition: in this paper, it is.
\end{remark}

\begin{remark} 
If $H \leq G$ is such that $FC_G(H)=C_G(H)$, 
then $(H \leq G, \sigma)$ satisfies the relative Kleppner condition if and only if  
for every $g\in C_G(H)\setminus\{e\}$, there exists some $h\in H$ such that $\sigma(g, h) \neq \sigma(h, g)$. 
\end{remark} 

 When $H$ is a normal subgroup of $G$, one may wonder if the relative Kleppner property for $(H \leq G, \sigma)$ can be characterized in a way similar to the one obtained in Proposition \ref{icc-rel} when $\sigma$ is trivial.  
 This is a nontrivial problem in general, but we will provide a positive answer for a large class of normal subgroups in Theorem \ref{sigma-centralizer2}. 

We define
\begin{align*}
C_G^\sigma(H) :&= \{ g \in G : \lvert g^H\rvert=1 \text{ and  $g$ is $\sigma$-regular w.r.t.~$H$} \}\\
&= \{ g\in C_G(H) : \widetilde\sigma(g, h) =1 \, \text{ for all } h\in H\}.
\end{align*}
Using  \eqref{left-product2}, one readily checks that $C_G^\sigma(H)$ is a subgroup of $C_G(H)$. Moreover, if $H$ is  normal in $G$, then so is $C_G^\sigma(H)$.
Next, we define
\begin{align*}
FC_G^\sigma(H) :&= \{ g\in G : \lvert g^H\rvert < \infty \text{ and $g$ is $\sigma$-regular w.r.t.~$H$} \}\\
&= \{ g\in FC_G(H) : \text{$g$ is $\sigma$-regular w.r.t.~$H$} \}.
\end{align*}
Clearly, $C_G^\sigma(H) \subseteq FC_G^\sigma(H)$,
and  $(H \leq G, \sigma)$ satisfies the relative Kleppner condition if and only if $FC_G^\sigma(H)$ is trivial.
If $\sigma$ is nontrivial, then $FC_G^\sigma(H)$ is not necessarily a subgroup of $G$. However, the following two properties hold.
\begin{lemma} \label{sigma-inverse}
$FC_G^\sigma(H)$ is closed under the inverse operation.
\end{lemma}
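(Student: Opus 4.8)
The plan is to check the two defining conditions for membership in $FC_G^\sigma(H)$ separately, that is, to show that if $g\in FC_G^\sigma(H)$ then $g^{-1}$ has a finite $H$-conjugacy class and that $g^{-1}$ is $\sigma$-regular w.r.t.~$H$.

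For the finiteness of $\lvert (g^{-1})^H\rvert$, the key observation I would make is that an element $h\in H$ satisfies $hg=gh$ if and only if $hg^{-1}=g^{-1}h$, so that $C_H(g^{-1})=C_H(g)$. Then $\lvert (g^{-1})^H\rvert = [H:C_H(g^{-1})]=[H:C_H(g)] = \lvert g^H\rvert$, which is finite by hypothesis. This step is immediate and poses no difficulty.

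For the $\sigma$-regularity of $g^{-1}$, I would fix $h\in H$ with $hg^{-1}=g^{-1}h$ (equivalently $hg=gh$) and aim to show $\widetilde{\sigma}(h,g^{-1})=1$. The main idea is to exploit the multiplicativity of $\widetilde{\sigma}$ in its second variable in the presence of commutation: since $h$ commutes with both $g$ and $g^{-1}$, identity \eqref{right-product2} applied with $r=h$, $s=g$, $t=g^{-1}$ yields
\[
\widetilde{\sigma}(h,e)=\widetilde{\sigma}(h,gg^{-1})=\widetilde{\sigma}(h,g)\,\widetilde{\sigma}(h,g^{-1}).
\]
A short computation from the definition of $\widetilde{\sigma}$ together with the normalization $\sigma(h,e)=\sigma(e,h)=1$ gives $\widetilde{\sigma}(h,e)=1$, while the $\sigma$-regularity of $g$ gives $\widetilde{\sigma}(h,g)=1$. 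Combining these forces $\widetilde{\sigma}(h,g^{-1})=1$, as desired.

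There is no serious obstacle here; the only point requiring a little care is to make sure the commutation hypotheses needed to invoke \eqref{right-product2} are genuinely met, which they are because $h$ commutes with $g$ precisely when it commutes with $g^{-1}$. Assembling the two parts then shows $g^{-1}\in FC_G^\sigma(H)$.
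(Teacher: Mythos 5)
Your proof is correct and takes essentially the same approach as the paper: where the paper writes $1=\widetilde{\sigma}(g^{-1}g,h)$ and splits it using the first-variable identity \eqref{left-product2}, you write $1=\widetilde{\sigma}(h,gg^{-1})$ and split it using the second-variable identity \eqref{right-product2} --- mirror images of the same argument. Both versions rest on exactly the same two facts, namely multiplicativity of $\widetilde{\sigma}$ under the commutation hypotheses (which, as you note, hold since $h$ commutes with $g$ iff it commutes with $g^{-1}$) and the vanishing $\widetilde{\sigma}(g,h)=1$ (equivalently $\widetilde{\sigma}(h,g)=1$) coming from the $\sigma$-regularity of $g$.
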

\begin{proof}
Let $g \in FC_G^\sigma(H)$. Then $g\in FC_G(H)$, so $g^{-1} \in FC_G(H)$. 
Assume that $h\in H$ commutes with $g^{-1}$. 
Then $h$ also commutes with $g$, so $\widetilde{\sigma}(g,h)=1$.
Therefore, using \eqref{left-product2}, we obtain
\[1 =  \widetilde{\sigma}(g^{-1}g, h) = \widetilde{\sigma}(g^{-1}, h)\widetilde{\sigma}(g,h) = \widetilde{\sigma}(g^{-1}, h).\]
This shows that $g^{-1}$ is $\sigma$-regular w.r.t.~$H$. Thus, $g^{-1} \in FC_G^\sigma(H)$.
\end{proof}

\begin{lemma}\label{sigma-product}
Let $r \in \N$ and suppose that $g_1,\dotsc,g_r\in FC_G^\sigma(H)$.
Then there exists an $n\in \N$ such that $(g_1\dotsm g_r)^{mn}\in FC_G^\sigma(H)$ for all $m\in \N$.
\end{lemma}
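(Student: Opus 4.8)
The plan is to reduce the statement to a single $\sigma$-regularity condition on one well-chosen power of $g:=g_1\cdots g_r$, and then to settle that by a homomorphism-to-$\T$ argument. First I would dispose of the finiteness half of membership in $FC_G^\sigma(H)$: since $FC_G(H)$ is a subgroup of $G$ (the centralizers $C_H(x)$ of finitely many elements of finite $H$-index intersect in a finite-index subgroup), every power of $g$ automatically lies in $FC_G(H)$. Thus only the $\sigma$-regularity w.r.t.~$H$ of the powers $g^{mn}$ has to be arranged.

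Next I would record a stabilization of centralizers. The subgroups $C_H(g^k)$ all contain $C_H(g)$, have finite index in $H$, and satisfy $C_H(g^a)\subseteq C_H(g^{ab})$; since only finitely many subgroups of $H$ contain the finite-index subgroup $C_H(g)$, I can pick $n_0$ for which $C_H(g^{n_0})$ is maximal, so that $C_H(g^{mn_0})=C_H(g^{n_0})=:L$ for every $m\in\N$. Using \eqref{left-product2}, $\widetilde\sigma(g^{mn_0},h)=\widetilde\sigma(g^{n_0},h)^m$ for $h\in L$ (as $h$ commutes with $g^{n_0}$), so the whole problem collapses to producing a single power $g^n$, with $n$ a multiple of $n_0$ (hence $C_H(g^n)=L$), that is $\sigma$-regular w.r.t.~$H$; all the $g^{mn}$ then follow.

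The heart of the argument --- and the step I expect to be the main obstacle, since $\sigma$-regularity genuinely becomes harder to maintain as $L$ grows beyond $C_H(g)$ --- is to show that $g^{n_0}$ is $\sigma$-regular ``up to bounded torsion''. Here I would use the hypothesis that each $g_i$ is $\sigma$-regular. Let $K:=\bigcap_{i=1}^r C_H(g_i)$, a finite-index subgroup of $H$, and let $K_0\trianglelefteq H$ be its normal core, still of finite index and contained in $C_H(g)$. For $h\in K_0$ each $g_i$ commutes with $h$, so $\widetilde\sigma(g_i,h)=1$, and \eqref{left-product2} gives $\widetilde\sigma(g,h)=\prod_{i}\widetilde\sigma(g_i,h)=1$; hence the map $f\colon L\to\T$, $f(h):=\widetilde\sigma(g^{n_0},h)$, is trivial on $K_0$. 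By \eqref{right-product2}, $f$ is a group homomorphism (every $h\in L$ commutes with $g^{n_0}$). Being trivial on the finite-index normal subgroup $K_0$, it factors through the finite group $L/K_0$, so $f(h)^{t}=f(h^{t})=1$ for all $h\in L$, where $t:=[L:K_0]$.

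Finally I would set $n:=t\,n_0$. Then $C_H(g^n)=L$, and for $h\in L$ equation \eqref{left-product2} yields $\widetilde\sigma(g^n,h)=\widetilde\sigma(g^{n_0},h)^{t}=f(h)^{t}=1$, so $g^n$ is $\sigma$-regular w.r.t.~$H$, i.e.~$g^n\in FC_G^\sigma(H)$. Combining this with the stabilization from the second paragraph, for every $m\in\N$ one has $C_H(g^{mn})=L$ and $\widetilde\sigma(g^{mn},h)=\widetilde\sigma(g^{n},h)^{m}=1$ for all $h\in L$, whence $g^{mn}\in FC_G^\sigma(H)$, as required. The only computations involved are the routine inductions on \eqref{left-product2}--\eqref{right-product2}; beyond these, nothing more than Lagrange's theorem is needed.
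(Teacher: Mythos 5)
Your proof is correct and follows essentially the same route as the paper's: you stabilize the centralizers $C_H(g^k)$ at a suitable power (your maximality argument is exactly the paper's $\kappa$/$\ell$ device), use the $\sigma$-regularity of the $g_i$ together with \eqref{left-product2}--\eqref{right-product2} to produce a homomorphism into $\T$ that is trivial on a fixed finite-index subgroup, and then kill its finite image by raising to the power of the index. The only cosmetic differences are that you vary the right-hand slot of $\widetilde{\sigma}$ (the paper varies the left-hand slot via $x \mapsto \widetilde{\sigma}(x,(gk)^n)$), you treat general $r$ directly rather than reducing to $r=2$, and you pass to the normal core, which the paper avoids by using multiplicativity of the index.
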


\begin{proof}
Let $g,k\in FC_G^\sigma(H)$. Since $FC_G(H)$ is a group,  we have that $(gk)^n \in FC_G(H)$
for every $n\geq 1$.
Set $R:=C_H(g)\cap C_H(k)=C_H(\{g,k\})$, and for each $n\geq 1$, set $Q_n:=C_H((gk)^n)$.
	
Clearly, $R$ is contained in $Q_n$ for every $n$, and it has finite index in $H$ since it is an intersection of finite index subgroups.
Moreover, if $m$ divides $n$, then $Q_m\subseteq Q_n$, so
\[
[Q_1 : R] \leq [Q_m : R] \leq [Q_n : R] \leq [H : R] < \infty.
\]
It follows from \eqref{left-product2} that
\[
\widetilde{\sigma}(xy,(gk)^n)=\widetilde{\sigma}(x,(gk)^n)\, \widetilde{\sigma}(y,(gk)^n)
\]
for all $x\in G$ and $y\in Q_n$. In particular, for every $n\geq 1$, the map $\nu_n\colon Q_n\to\T$ given by 
$x\mapsto\widetilde{\sigma}(x,(gk)^n)$ is a homomorphism. 
Define
\[
\begin{split}
		\kappa &:= \max\{ [Q_n : R] : n\geq 1\}, \\
		\ell &:= \min\{ n : [Q_n : R]=\kappa \}.
	\end{split}
\]
We note that $[Q_n : R]=\kappa$ and $Q_n = Q_\ell$ whenever $n$ is a multiple of $\ell$, 
since we then have that
\[ \kappa\geq	[Q_n : R] = [Q_n:Q_\ell]\,  [Q_\ell:R]  = [Q_n:Q_\ell]\, \kappa\, \geq \,\kappa.\]
By applying \eqref{right-product2} repeatedly, we get that the homomorphism $\nu_\ell\colon Q_\ell\to\T$ maps $R$ to $\{1\}$. Indeed, for every $y \in R$, we have $\widetilde{\sigma}(y,g)= \widetilde{\sigma}(y,k)=1$, hence
\begin{align*}
\widetilde{\sigma}(y, gk) &= 
\widetilde{\sigma}(y,g)\widetilde{\sigma}(y,k) = 1,
\end{align*}
and therefore
\begin{align*} 
\nu_\ell(y) = \widetilde{\sigma}(y, (gk)^\ell) &= 
(\widetilde{\sigma}(y,gk))^\ell = 1.
\end{align*}
Since $[Q_\ell : R]=\kappa$, this means that the image of $\nu_\ell$ 
is a finite subgroup of $\T$ with at most $\kappa$ elements. 
Hence, for each $m\geq 1$ and $x\in Q_{m\kappa\ell}=Q_\ell$,  applying again \eqref{right-product2} repeatedly,  we get that
\[
\widetilde{\sigma}(x,(gk)^{m\kappa\ell})=\widetilde{\sigma}(x,(gk)^\ell)^{m\kappa}= (\nu_\ell(x))^{m\kappa} = (\nu_\ell(x)^\kappa)^m=1.
\]
In other words, $\sigma(x,(gk)^{m\kappa\ell}) =\sigma((gk)^{m\kappa\ell}, x) $ for every $x\in H$ which commutes with $(gk)^{m\kappa\ell}$, i.e., $(gk)^{m\kappa\ell}$ is $\sigma$-regular w.r.t~$H$. Thus, $(gk)^{m\kappa\ell} \in FC^\sigma_G(H)$ for all $m\geq 1$. 

This shows that the conclusion holds when $r=2$. It also holds if $r=1$ (by setting $g_2 := e$). The proof when $r>2$ proceeds in essentially the same way, now with $R:= \cap_{j=1}^r C_H(g_j)$ and $Q_n= C_H((g_1\cdots g_r)^n)$.  
\end{proof}

We recall that the FC-center $FC(G)$ of $G$ consists of the elements of $G$ having a finite $G$-conjugacy class. In other words, $FC(G) = FC_G(G)$. 
The equivalence between conditions (i) and (ii) in the following proposition is pointed out by Connell in \cite[p.~675]{Connell}.

\begin{proposition}\label{prime-def}
The following conditions are equivalent:
\begin{itemize}
	\item[(i)] $G$ has no finite normal subgroup except $\{e\}$.
	\item[(ii)] The FC-center of $G$   is a torsion-free group.
	\item[(iii)] The FC-center of $G$ is a torsion-free abelian group.
\end{itemize}
\end{proposition}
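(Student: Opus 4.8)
The plan is to prove the cycle of implications (iii) $\Rightarrow$ (ii) $\Rightarrow$ (i) $\Rightarrow$ (iii). Two preliminary observations drive everything: first, $FC(G) = FC_G(G)$ is a characteristic (in particular normal) subgroup of $G$; second, it is itself an FC-group, since the conjugacy class of any $x \in FC(G)$ computed within $FC(G)$ is contained in its finite $G$-conjugacy class. The implication (iii) $\Rightarrow$ (ii) is immediate. For (ii) $\Rightarrow$ (i), I would observe that any finite normal subgroup $N$ of $G$ is contained in $FC(G)$, because each $x \in N$ has $x^G \subseteq N$ finite; since $N$ is finite, all its elements have finite order, so torsion-freeness of $FC(G)$ forces $N = \{e\}$.

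The substance lies in (i) $\Rightarrow$ (iii), which I would split into showing that $FC(G)$ is torsion-free and that it is abelian. For torsion-freeness the tool is Dietzmann's lemma: if $g \in FC(G)$ has finite order, then its $G$-conjugacy class $g^G$ is a finite subset of $G$ that is closed under conjugation and consists of elements of finite order (conjugate elements have equal order), so the subgroup $\langle g^G \rangle$ it generates, namely the normal closure of $g$, is a finite normal subgroup of $G$. By (i) this subgroup is trivial, whence $g = e$, and $FC(G)$ is torsion-free.

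For abelianness I would invoke the classical fact that the derived subgroup of any FC-group is periodic. The standard route: every finitely generated subgroup $H \leq FC(G)$ is again an FC-group, and for such an $H$ the center $Z(H) = \bigcap_i C_H(h_i)$, the intersection being over a finite generating set, has finite index in $H$; hence $H/Z(H)$ is finite, and Schur's theorem gives that $H'$ is finite, so periodic. Since every element of $(FC(G))'$ lies in $H'$ for some finitely generated $H$, the derived subgroup $(FC(G))'$ is periodic. Combined with the torsion-freeness just established, this forces $(FC(G))' = \{e\}$, so $FC(G)$ is abelian, which completes (iii).

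The main obstacle is the abelianness half of (i) $\Rightarrow$ (iii): torsion-freeness follows cleanly from a single application of Dietzmann's lemma, but ruling out nonabelian behaviour requires the structure theory of FC-groups, specifically the periodicity of the derived subgroup obtained via Schur's theorem applied to finitely generated subgroups. Once that classical input is in place the argument is short, so the real work is recalling and correctly applying the right FC-group machinery rather than carrying out any delicate estimate.
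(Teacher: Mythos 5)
Your proof is correct, and its logical skeleton coincides with the paper's: both arguments rest on the two facts that a nontrivial torsion element of $FC(G)$ generates, via its finite $G$-conjugacy class, a nontrivial finite normal subgroup of $G$, and that commutators of elements of an FC-group are torsion, so that torsion-freeness forces abelianness. The difference lies in how these facts are obtained. The paper invokes Robinson's structure theorem for FC-groups, \cite[Theorem~4.32]{Robinson}, as a black box: the torsion elements of an FC-group form a locally finite subgroup, so the normal closure $\langle r^G\rangle$ of a torsion element $r$, being finitely generated and locally finite, is finite; and commutators of FC-elements are torsion because the quotient of an FC-group by its torsion subgroup is torsion-free abelian. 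You instead derive both facts from more primitive classical inputs: Dietzmann's lemma yields the finiteness of $\langle g^G\rangle$ in one stroke, and the periodicity of the derived subgroup $(FC(G))'$ comes from Schur's theorem (center of finite index implies finite derived subgroup) applied to finitely generated subgroups of $FC(G)$, each of which is FC with center of finite index. What your route buys is self-containedness---it essentially unwinds the proof of Robinson's theorem, exhibiting exactly which classical FC-group machinery is needed---at the cost of being longer; the paper's version is shorter but leans entirely on the citation. The slightly different organization (your cycle (iii)~$\Rightarrow$~(ii)~$\Rightarrow$~(i)~$\Rightarrow$~(iii) versus the paper's (i)~$\Rightarrow$~(ii)~$\Rightarrow$~(iii) together with (iii)~$\Rightarrow$~(ii) and (ii)~$\Rightarrow$~(i)) is immaterial, and your verification of the easy implications matches the paper's.
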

\begin{proof}
The equivalences betweeen (i), (ii), and (iii) can be deduced from e.g.~\cite[Theorem~4.32]{Robinson}. Indeed, assume that there exists some $r\in FC(G)\setminus \{e\}$ having torsion.  Then the normal subgroup $N$ of $G$ generated by the $G$-conjugacy class $r^G$  is contained in $FC(G)$. Hence, $N$ is locally finite by  \cite[Theorem~4.32, ii)]{Robinson}. Since $N$ is finitely generated, this means that it is finite. This shows that (i) $\Rightarrow $ (ii). Moreover, if $r, s \in FC(G)$, then \cite[Theorem~4.32, ii)]{Robinson} also gives that $rsr^{-1}s^{-1}$ is a torsion element of $FC(G)$. So if (ii) holds, then we get that $rs=sr$. Thus, (ii) $\Rightarrow $ (iii), and the converse implication is trivial. Finally, assume that $G$ has a nontrivial finite normal subgroup $N$. Then $N$ is contained $FC(G)$. Since $N$ is finite, we get that $FC(G)$ contains nontrivial torsion elements. This shows that (ii) $\Rightarrow $ (i).
 \end{proof}
Connell calls the group $G$ \emph{prime}  whenever  condition (i) (or (ii)) holds, and shows in \cite[Theorem 8]{Connell} that 
$G$ is prime if and only if the group algebra $\C[G]$ is prime.
We will adopt his terminology.
\begin{definition} \label{primedef} The group $G$ is said to be \emph{prime} when it satisfies any of the equivalent conditions in Proposition \ref{prime-def}. 
\end{definition}
Clearly, every icc group and every torsion-free group is prime. An $R$-group (cf.~Example \ref{R-g}) is prime if and only if its center is torsion-free. As shown by Gelander and Glasner \cite[Theorem 1.15]{GG}, a countable non-elementary convergence group (e.g., a subgroup of a Gromov hyperbolic group, or a Kleinian group) is prime if and only if it admits a faithful primitive action on a set.

Our next result is a twisted analogue of Lemma \ref{icc-normal} in the case where $H$ is prime.

\begin{lemma}\label{sigma-centralizer}
Let $H$ be a normal subgroup of $G$.
Assume that $H$ is prime and that $(H,\sigma)$ satisfies Kleppner's condition.
Then
\[
C_G^\sigma(H) = FC_G^\sigma(H).
\]
\end{lemma}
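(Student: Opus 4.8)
The plan is to adapt the proof of the untwisted Lemma~\ref{icc-normal}, replacing the hypothesis that $H$ is icc by the combination of primeness of $H$ and Kleppner's condition for $(H,\sigma)$. Since the inclusion $C_G^\sigma(H)\subseteq FC_G^\sigma(H)$ is automatic, it suffices to fix $g\in FC_G^\sigma(H)$ and prove that $g\in C_G(H)$, i.e.\ that $\lvert g^H\rvert=1$; as $g$ is already $\sigma$-regular w.r.t.~$H$, this then places it in $C_G^\sigma(H)$. So I would fix an arbitrary $h_0\in H$, set $k:=h_0gh_0^{-1}\in g^H$ and $t:=g^{-1}k$, and aim to show $t=e$, which forces $k=g$ and hence (by arbitrariness of $h_0$) $g^H=\{g\}$.

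First I would record two facts. Because $H$ is normal, $t=g^{-1}h_0gh_0^{-1}$ lies in $H$. Because $\sigma$-regularity w.r.t.~$H$ is a property of $H$-conjugacy classes and $k^H=g^H$, the element $k$ again lies in $FC_G^\sigma(H)$; together with $g^{-1}\in FC_G^\sigma(H)$ (Lemma~\ref{sigma-inverse}) this lets me invoke Lemma~\ref{sigma-product} with $g_1=g^{-1}$, $g_2=k$ to produce some $n\in\N$ with $t^n=(g^{-1}k)^n\in FC_G^\sigma(H)$. Since $t\in H$, this gives $t^n\in FC_G^\sigma(H)\cap H$. I would then observe that an element $h\in H$ lying in $FC_G^\sigma(H)$ has finite $H$-conjugacy class and is $\sigma$-regular in $H$, so Kleppner's condition for $(H,\sigma)$ forces $h=e$; that is, $FC_G^\sigma(H)\cap H=\{e\}$. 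Consequently $t^n=e$, so $t$ is a torsion element of $H$.

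It then remains to show $t\in FC(H)$, after which primeness of $H$ (torsion-freeness of $FC(H)$, cf.\ Definition~\ref{primedef}) immediately yields $t=e$. For this I would note that any $h\in H$ commuting with both $g$ and $k$ also commutes with their product $t=g^{-1}k$, so $C_H(g)\cap C_H(k)\subseteq C_H(t)$; since $g,k\in FC_G(H)$, both $C_H(g)$ and $C_H(k)$ have finite index in $H$, hence so does $C_H(t)$, giving $\lvert t^H\rvert<\infty$. The main obstacle, and the place where the twisted setting genuinely differs from Lemma~\ref{icc-normal}, is that $FC_G^\sigma(H)$ need not be a subgroup when $\sigma$ is nontrivial, so one cannot simply assert $t\in FC_G^\sigma(H)$ and intersect with $H$ as in the untwisted case. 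Passing instead to a suitable power via Lemma~\ref{sigma-product} (to stay inside $FC_G^\sigma(H)$), and then reconnecting the resulting torsion information to $FC(H)$ through the finite-index centralizer argument, is the crux of the proof.
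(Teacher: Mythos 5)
Your proof is correct and follows essentially the same route as the paper's: both fix $k\in g^H$, use Lemmas \ref{sigma-inverse} and \ref{sigma-product} to place a power $(g^{-1}k)^n$ in $FC_G^\sigma(H)$, use normality of $H$ to see $g^{-1}k\in H$, invoke Kleppner's condition for $(H,\sigma)$ to force $(g^{-1}k)^n=e$, and finish with torsion-freeness of $FC(H)$ from primeness. The only cosmetic differences are that you phrase the Kleppner step as $FC_G^\sigma(H)\cap H=\{e\}$ and re-derive $g^{-1}k\in FC(H)$ via the finite-index centralizer intersection, whereas the paper gets this directly from the fact that $FC_G(H)$ is a subgroup.
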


\begin{proof}
Suppose that $g\in FC_G^\sigma(H)$ and $k\in g^H$.
Then $k \in FC_G^\sigma(H)$, and $g^{-1} \in FC_G^\sigma(H)$ by Lemma \ref{sigma-inverse}. So Lemma~\ref{sigma-product} gives that $(g^{-1}k)^n$ is $\sigma$-regular w.r.t.~$H$ for some $n\geq 1$.
Moreover, $g^{-1}k \in H$ (since $H$ is normal and $k = hgh^{-1}$ for some $h\in H$).
As $(g^{-1}k)^H$ is finite, we get that $g^{-1}k\in FC(H)$, hence that $(g^{-1}k)^n \in FC(H)$.
 Since $(H,\sigma)$ satisfies Kleppner's condition, this means that $(g^{-1}k)^n=e$.
As $H$ is assumed to be prime, $FC(H)$ is torsion-free, so we must have $g^{-1}k=e$, that is, $k=g$. This shows that $\lvert g^H\rvert=1 $. Hence, $FC_G^\sigma(H) =C_G^\sigma(H)$.
\end{proof}
\begin{remark} 
When 
$C_G^\sigma(H) = FC_G^\sigma(H)$,
e.g., when $H$ satisfies the assumptions in Lemma \ref{sigma-centralizer}, 
then by modifying the argument of Kleppner in \cite[Lemma~4]{Kleppner}, one may deduce that  $C^*_r(H,\sigma)' \cap C^*_r(G,\sigma) = C^*_r(C_G^\sigma(H),\sigma)$ and $\L(H,\sigma)' \cap \L(G,\sigma) = \L(C_G^\sigma(H),\sigma)$. 

\end{remark}

As an immediate consequence of Lemma \ref{sigma-centralizer}, we get:
\begin{theorem} \label{sigma-centralizer2}
Let $H$ be a normal subgroup of $G$ and assume that $H$ is prime.
Then $(H\leq G,\sigma)$ satisfies the relative Kleppner condition  if and only if  $(H,\sigma)$ satisfies Kleppner's condition and $C_G^\sigma(H)$ is trivial. 
\end{theorem}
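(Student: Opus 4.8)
The plan is to deduce both directions from the single characterization recorded just before Lemma \ref{sigma-inverse}, namely that $(H\leq G,\sigma)$ satisfies the relative Kleppner condition precisely when $FC_G^\sigma(H)=\{e\}$, together with the containment $C_G^\sigma(H)\subseteq FC_G^\sigma(H)$ and Lemma \ref{sigma-centralizer}. In effect, the substantive work has already been carried out in the preceding lemmas, so the theorem is a repackaging of their conclusions rather than a new argument.

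For the forward implication, I would assume that $(H\leq G,\sigma)$ satisfies the relative Kleppner condition, so that $FC_G^\sigma(H)$ is trivial. Since $C_G^\sigma(H)$ is a subgroup of $FC_G^\sigma(H)$, it follows at once that $C_G^\sigma(H)=\{e\}$. Moreover, as noted immediately after Definition \ref{rel K}, the relative Kleppner condition for $(H\leq G,\sigma)$ forces both $(H,\sigma)$ and $(G,\sigma)$ to satisfy Kleppner's condition; in particular $(H,\sigma)$ does. This gives the two asserted consequences.

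For the converse, I would assume that $(H,\sigma)$ satisfies Kleppner's condition and that $C_G^\sigma(H)$ is trivial. Since $H$ is normal and prime and $(H,\sigma)$ satisfies Kleppner's condition, the hypotheses of Lemma \ref{sigma-centralizer} are met, whence $FC_G^\sigma(H)=C_G^\sigma(H)=\{e\}$. By the characterization recalled above, this is exactly the statement that $(H\leq G,\sigma)$ satisfies the relative Kleppner condition.

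There is no genuine obstacle at this level: all the difficulty is concentrated in Lemma \ref{sigma-centralizer}, whose proof in turn rests on the stability of $FC_G^\sigma(H)$ under inversion (Lemma \ref{sigma-inverse}), the delicate index-and-homomorphism argument showing that suitable powers of products of elements remain $\sigma$-regular (Lemma \ref{sigma-product}), and the torsion-freeness of $FC(H)$ supplied by the primeness of $H$. Once those ingredients are in place, the theorem follows by the bookkeeping above.
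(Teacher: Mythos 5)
Your proof is correct and follows essentially the same route as the paper, which derives the theorem as an immediate consequence of Lemma \ref{sigma-centralizer}: the forward direction uses only the inclusion $C_G^\sigma(H)\subseteq FC_G^\sigma(H)$ and the observation after Definition \ref{rel K} that the relative Kleppner condition forces $(H,\sigma)$ to satisfy Kleppner's condition, while the converse is exactly the equality $FC_G^\sigma(H)=C_G^\sigma(H)$ supplied by that lemma. Your unpacking of the bookkeeping is accurate and complete.
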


\section{Irreducibility and primeness} \label{irred-prim}
Let  $H$ be a subgroup of $G$. We will denote the restriction of $\sigma$ to $H\times H$ by the same symbol $\sigma$. 
 It is well-known that $C^*_r(H,\sigma)$ (resp.~$\L(H,\sigma)$) may be identified with the $C^*$-subalgebra of $C_r^*(G,\sigma)$ (resp.~the von Neumann subalgebra of $\L(G,\sigma)$) generated by $\lambda_\sigma(H)$, cf.~\cite[Subsection 6.46]{ZM}.
 
The following two lemmas are straightforward generalizations of \cite[Lemmas~2.2 and~2.3]{primeness} (the assumption that $H$ is normal is not used in the proof of these two results).
\begin{lemma}\label{center}
Let $H\leq G$,  $T \in \L(G,\sigma)$ and set $f_T=T\delta_e \in \ell^2(G)$. 
Then the following conditions are equivalent:
\begin{itemize}
\item[(i)] $T$ belongs to $\L(H,\sigma)'\cap \L(G,\sigma)$.
\item[(ii)] $f_T(hgh^{-1}) = \widetilde{\sigma}(h,g) f_T(g)$ for all $h\in H$ and $g\in G$.
\end{itemize}
Moreover, $f_T$ can be nonzero only on the finite $H$-conjugacy classes.
\end{lemma}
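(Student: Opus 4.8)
The plan is to reduce condition (i) to a pointwise functional equation for $f_T$ by rephrasing commutation with $\L(H,\sigma)$ in terms of the inner $*$-automorphisms $\operatorname{Ad}\lambda_\sigma(h)\colon T\mapsto \lambda_\sigma(h)T\lambda_\sigma(h)^*$ of $\L(G,\sigma)$, and then feeding in the already-recorded identity $\lambda_\sigma(h)\lambda_\sigma(g)\lambda_\sigma(h)^* =\widetilde{\sigma}(h,g)\lambda_\sigma(hgh^{-1})$. First I would record the elementary facts that $\lambda_\sigma(g)\delta_e=\delta_g$, so that $f_T(g)=\langle T\delta_e,\delta_g\rangle$, and that the assignment $T\mapsto f_T$ is injective on $\L(G,\sigma)$, since $\tau(T^*T)=\langle T^*T\delta_e,\delta_e\rangle=\|f_T\|_2^2$ and $\tau$ is faithful. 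Since $\lambda_\sigma(H)$ generates $\L(H,\sigma)$, condition (i) is equivalent to $T$ commuting with every $\lambda_\sigma(h)$, $h\in H$, i.e.\ to $\lambda_\sigma(h)T\lambda_\sigma(h)^*=T$ for all $h\in H$.

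The key step is to prove the transformation formula
\[
f_{\lambda_\sigma(h)T\lambda_\sigma(h)^*}(x)=\widetilde{\sigma}(h,h^{-1}xh)\,f_T(h^{-1}xh)\qquad(x\in G,\ h\in H)
\]
for every $T\in\L(G,\sigma)$. I would first verify it for $T=\lambda_\sigma(g)$ directly from the conjugation identity: both sides vanish unless $x=hgh^{-1}$, and on the surviving term the scalar is $\widetilde{\sigma}(h,g)=\widetilde{\sigma}(h,h^{-1}xh)$. By linearity this extends to the $*$-subalgebra spanned by $\lambda_\sigma(G)$. To reach all of $\L(G,\sigma)=\lambda_\sigma(G)''$, I would note that each side, as a function of $T$, is a matrix coefficient of the form $T\mapsto\langle T\xi,\eta\rangle$ for fixed vectors (for the left-hand side, $\xi=\lambda_\sigma(h)^*\delta_e$ and $\eta=\lambda_\sigma(h)^*\delta_x$), hence weak-operator continuous, while the spanning $*$-algebra is weak-operator dense.

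Combining the reduction with injectivity of $T\mapsto f_T$, condition (i) holds if and only if $f_T(x)=\widetilde{\sigma}(h,h^{-1}xh)\,f_T(h^{-1}xh)$ for all $h\in H$ and $x\in G$; the substitution $x=hgh^{-1}$, for which $h^{-1}xh=g$, converts this into precisely the relation $f_T(hgh^{-1})=\widetilde{\sigma}(h,g)\,f_T(g)$ of (ii). For the final assertion, since $\lvert\widetilde{\sigma}\rvert\equiv 1$, equation (ii) forces $\lvert f_T\rvert$ to be constant along each $H$-conjugacy class; as $f_T\in\ell^2(G)$, it cannot take a nonzero value on an infinite $H$-conjugacy class, so $f_T$ is supported on the finite $H$-conjugacy classes.

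The one genuinely delicate point I expect is the extension of the transformation formula from the spanning $*$-algebra to all of $\L(G,\sigma)$: it hinges on handling $\operatorname{Ad}\lambda_\sigma(h)$ and the "evaluate at $x$" functional simultaneously within a single continuity argument, in a topology in which the spanning algebra is dense. Everything else is bookkeeping, and in particular all the cocycle manipulations are already absorbed into the given conjugation identity, so no separate cocycle computation is needed.
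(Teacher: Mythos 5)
Your proof is correct, and it reaches the statement by a genuinely different route than the one the paper relies on. The paper gives no self-contained argument: it cites \cite{primeness} (Lemmas 2.2 and 2.3 there), where the functional equation is obtained by a direct matrix-coefficient computation resting on the right-convolution description of $\L(G,\sigma)$ --- i.e.\ on the fact that every $T\in\L(G,\sigma)$ commutes with the right regular ($\bar\sigma$-)projective representation, which is what lets one express vectors like $T\delta_{h^{-1}}$ in terms of $f_T$. You avoid introducing the right regular representation altogether: you verify the transformation formula $f_{\lambda_\sigma(h)T\lambda_\sigma(h)^*}(x)=\widetilde{\sigma}(h,h^{-1}xh)\,f_T(h^{-1}xh)$ on the unital $*$-algebra $\operatorname{Span}\lambda_\sigma(G)$, where it is immediate from the displayed conjugation identity, and then extend it to all of $\L(G,\sigma)$ by noting that both sides are matrix coefficients in $T$ (hence weak-operator continuous) and that the spanning $*$-algebra is weak-operator dense by the double commutant theorem; injectivity of $T\mapsto f_T$, which you correctly deduce from faithfulness of the canonical trace $\tau$, then converts the operator identity $\lambda_\sigma(h)T\lambda_\sigma(h)^*=T$ into the pointwise condition (ii), in both directions at once. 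The trade-off: the classical Kleppner-style computation yields explicit formulas for each fixed $T$ but requires setting up the commutant picture of $\L(G,\sigma)$, whereas your density argument uses only ingredients already displayed in the paper (the conjugation identity and faithfulness of $\tau$), at the cost of the continuity-plus-density step you rightly flag as the delicate point. Your handling of the final assertion --- $\lvert f_T\rvert$ is constant on each $H$-conjugacy class since $\lvert\widetilde{\sigma}\rvert\equiv 1$, and square-summability then kills any nonzero value on an infinite class --- coincides with the standard argument.
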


\begin{lemma}\label{function-exists}
Let $H\leq G$ and $C$ be an $H$-conjugacy class in $G$.
Then the following conditions are equivalent:
\begin{itemize}
\item[(i)] $C$ is $\sigma$-regular w.r.t.~$H$.
\item[(ii)] There is a function $f\colon G\to \C$ satisfying:
\begin{itemize}
\item[1.] $f(g)\neq 0$ for all $g\in C$.
\item[2.] $f(hgh^{-1}) = \widetilde{\sigma}(h,g)
f(g)$ for all $g\in C$ and all $h\in H$.
\end{itemize}
\end{itemize}
Moreover, $f$ can be chosen in $\ell^2(G)$ if and only if $C$ is finite.
\end{lemma}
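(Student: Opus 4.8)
The plan is to prove (i) $\Leftrightarrow$ (ii) by explicitly constructing the function of (ii) out of (i), and then to read off both the converse and the $\ell^2$-statement from the equivariance relation in condition~2. The only computational input I would need is the twisted cocycle identity \eqref{left-product}, namely $\widetilde{\sigma}(rs,t)=\widetilde{\sigma}(r,sts^{-1})\widetilde{\sigma}(s,t)$, together with the fact (noted in the excerpt) that $\sigma$-regularity w.r.t.~$H$ is a property of $H$-conjugacy classes.

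For (i) $\Rightarrow$ (ii) I would fix a base point $g_0\in C$, so that $C=g_0^H$ and the stabilizer of $g_0$ under the $H$-conjugation action is exactly $C_H(g_0)$. The natural candidate is to put $f:=0$ off $C$ and define $f(h g_0 h^{-1}):=\widetilde{\sigma}(h,g_0)$ for $h\in H$ (so $f(g_0)=1$), which is $\T$-valued and nowhere zero on $C$, giving condition~1. The one point to verify is well-definedness: if $h_1 g_0 h_1^{-1}=h_2 g_0 h_2^{-1}$, then $h:=h_2^{-1}h_1\in C_H(g_0)$, and applying \eqref{left-product} with $h g_0 h^{-1}=g_0$ yields $\widetilde{\sigma}(h_1,g_0)=\widetilde{\sigma}(h_2,g_0)\,\widetilde{\sigma}(h,g_0)$. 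Thus $f$ is well defined precisely when $\widetilde{\sigma}(h,g_0)=1$ for every $h\in C_H(g_0)$, which is exactly the assertion that $g_0$ is $\sigma$-regular w.r.t.~$H$; assumption (i) supplies this. Condition~2 then follows by one more use of \eqref{left-product}: for $g=h g_0 h^{-1}\in C$ and $h'\in H$, writing $h' g h'^{-1}=(h'h)g_0(h'h)^{-1}$ gives $f(h' g h'^{-1})=\widetilde{\sigma}(h'h,g_0)=\widetilde{\sigma}(h',g)\,\widetilde{\sigma}(h,g_0)=\widetilde{\sigma}(h',g)\,f(g)$.

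For (ii) $\Rightarrow$ (i), since $\sigma$-regularity is a class property it suffices to check that $g_0$ is $\sigma$-regular w.r.t.~$H$. Given $f$ as in (ii) and any $h\in C_H(g_0)$, condition~2 with $g=g_0$ gives $f(g_0)=f(h g_0 h^{-1})=\widetilde{\sigma}(h,g_0)\,f(g_0)$, and since $f(g_0)\neq 0$ by condition~1 we conclude $\widetilde{\sigma}(h,g_0)=1$; as $h$ ranges over $C_H(g_0)$ this is exactly (i).

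Finally, for the moreover I would observe that condition~2 forces $\lvert f\rvert$ to be constant on $C$: because $\widetilde{\sigma}$ is $\T$-valued, $\lvert f(h g h^{-1})\rvert=\lvert f(g)\rvert$ for all $g\in C$, $h\in H$, so $\lvert f(g)\rvert=\lvert f(g_0)\rvert>0$ for every $g\in C$. Hence if $C$ is infinite, then $\sum_{g\in C}\lvert f(g)\rvert^2=\infty$ for any $f$ satisfying conditions~1 and~2, so no such $f$ lies in $\ell^2(G)$; conversely, if $C$ is finite, the function built above is finitely supported, so it belongs to $\ell^2(G)$. I expect the only genuine obstacle to be the well-definedness step in (i) $\Rightarrow$ (ii), where \eqref{left-product} and $\sigma$-regularity must be combined correctly; the remaining verifications are routine bookkeeping with the same identity.
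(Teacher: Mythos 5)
Your proof is correct. The paper gives no written argument for this lemma — it simply declares it a straightforward generalization of Lemmas~2.2 and~2.3 of the cited primeness paper — and your construction (fix $g_0\in C$, set $f(hg_0h^{-1}):=\widetilde{\sigma}(h,g_0)$, observe via \eqref{left-product} that well-definedness is exactly $\sigma$-regularity of $g_0$ w.r.t.~$H$, and read off the $\ell^2$ statement from $\lvert f\rvert$ being constant on $C$) is precisely that standard argument, so the two approaches coincide.
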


The twisted analogue of \cite[Proposition 5.1]{Rordam} is the following:
\begin{proposition}\label{kleppner-commutant}
Let $H\leq G$. Then the following assertions are equivalent:
\begin{itemize}
\item[(i)] $(H \leq G, \sigma)$ satisfies the relative Kleppner condition.

\item[(ii)] $\L(H,\sigma)'\cap \,\L(G,\sigma)=\C1$, i.e., the  inclusion $\L(H,\sigma)\subseteq \L(G,\sigma)$
 is irreducible.
\item[(iii)] $C^*_r(H,\sigma)'\cap C^*_r(G,\sigma)=\C1$,
i.e., the  inclusion $C_r^*(H,\sigma)\subseteq C_r^*(G,\sigma)$ is irreducible.
\end{itemize}

\smallskip \noindent Moreover, if $G$ is countable, then $\L(H, \sigma) \subseteq \L(G, \sigma)$ is $C^*$-irreducible if and only if $(H \leq G, \sigma)$ satisfies the relative Kleppner condition and $[G:H] < \infty$.
\end{proposition}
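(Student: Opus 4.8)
The plan is to derive the equivalence of (i)--(iii) directly from Lemmas \ref{center} and \ref{function-exists}, and then to treat the \emph{Moreover} clause by separating three implications: the necessity of the relative Kleppner condition, the sufficiency of the relative Kleppner condition together with finite index, and the necessity of finite index.

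For the equivalence I would first prove (i)$\Leftrightarrow$(ii). Given $T\in\L(H,\sigma)'\cap\L(G,\sigma)$, Lemma \ref{center} attaches to it the function $f_T=T\delta_e$ satisfying the covariance relation $f_T(hgh^{-1})=\widetilde\sigma(h,g)f_T(g)$, which is supported on finite $H$-conjugacy classes. If (i) holds and $f_T(g)\neq0$ for some $g\neq e$, then $\lvert\widetilde\sigma\rvert=1$ forces $f_T$ to be nonzero on the whole finite class $g^H$, and Lemma \ref{function-exists} then shows $g^H$ is $\sigma$-regular w.r.t.\ $H$, contradicting the relative Kleppner condition; hence $f_T$ is supported on $\{e\}$, i.e.\ $T\in\C1$. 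Conversely, if (i) fails there is a nontrivial finite $\sigma$-regular class $C$, and Lemma \ref{function-exists} produces a finitely supported $f\in\ell^2(G)$ obeying the covariance relation and nonzero on $C$; the converse direction of Lemma \ref{center} turns $f$ into an element of $\L(H,\sigma)'\cap\L(G,\sigma)$ outside $\C1$. For (ii)$\Leftrightarrow$(iii) I would use that the commutant of a self-adjoint set equals the commutant of the von Neumann algebra it generates, so $C^*_r(H,\sigma)'=\L(H,\sigma)'$; thus (ii) trivially gives (iii), while the element constructed when (i) fails is a finite combination $\sum_g f(g)\lambda_\sigma(g)$, hence lies in $C^*_r(G,\sigma)$ and witnesses the failure of (iii).

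Turning to the \emph{Moreover} clause, one implication is soft: a $C^*$-irreducible inclusion is irreducible by \cite[Remark~3.8]{Rordam}, so it satisfies (ii), hence the relative Kleppner condition by the equivalence above. For the sufficiency I would argue through the relative Dixmier property. The relative Kleppner condition gives $\L(H,\sigma)'\cap\L(G,\sigma)=\C1$, so both algebras are factors, and $[G:H]<\infty$ yields the finite Jones index $[\L(G,\sigma):\L(H,\sigma)]=[G:H]$. Popa's results on the relative Dixmier property \cite{Popa} apply to such an irreducible finite-index inclusion, and since $\tau$ is a faithful tracial state on $\L(G,\sigma)$, \cite[Proposition~3.12]{Rordam} upgrades the relative Dixmier property to $C^*$-irreducibility.

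The hard part will be the necessity of $[G:H]<\infty$, which I would prove by contraposition: assuming $[G:H]=\infty$, I would exhibit a nonzero positive $a\in\L(G,\sigma)$ that cannot be averaged up by $\L(H,\sigma)$, i.e.\ with no $x_1,\dots,x_n\in\L(H,\sigma)$ satisfying $\sum_i x_i^*ax_i\geq1$, thereby violating R\o rdam's intrinsic averaging characterization of $C^*$-irreducibility (equivalently, producing a non-simple intermediate $C^*$-algebra). The point is that the Pimsner--Popa index of $\L(H,\sigma)\subseteq\L(G,\sigma)$ equals $[G:H]=\infty$: applying the trace-preserving conditional expectation $E\colon\L(G,\sigma)\to\L(H,\sigma)$ controls only the diagonal part $\sum_i x_i^*E(a)x_i$, which is always dominatable since $E(a)>0$ and $\L(H,\sigma)$ is a factor, so the genuine obstruction lies in the off-diagonal remainder $\sum_i x_i^*(a-E(a))x_i$. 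The main obstacle is therefore to choose $a$ with the right spectral behaviour relative to $E$, with spectrum accumulating at $0$ along infinitely many $E$-orthogonal directions, so that this remainder cannot be compensated; this is precisely the step where the hypothesis $[G:H]=\infty$ must enter in an essential, quantitative way, and where I expect the real work to be.
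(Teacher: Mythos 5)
Your proof of the equivalence of (i), (ii) and (iii) is correct and essentially identical to the paper's: (i)~$\Rightarrow$~(ii) from Lemmas \ref{center} and \ref{function-exists} (plus the standard fact, which you gloss over, that $\delta_e$ is separating for $\L(G,\sigma)$, so $T\delta_e\in\C\delta_e$ forces $T\in\C1$), (ii)~$\Rightarrow$~(iii) from $C^*_r(H,\sigma)'=\L(H,\sigma)'$, and (iii)~$\Rightarrow$~(i) via the finite sum $\sum_{g\in C}f(g)\lambda_\sigma(g)$, which is exactly the element the paper exhibits. In the \emph{Moreover} clause, your first step ($C^*$-irreducible $\Rightarrow$ irreducible $\Rightarrow$ relative Kleppner) also matches the paper, and your sufficiency argument --- irreducibility plus finite Jones index $[\L(G,\sigma):\L(H,\sigma)]=[G:H]$ gives the relative Dixmier property by Popa, which upgrades to $C^*$-irreducibility by \cite[Proposition~3.12]{Rordam} since $\tau$ is a faithful trace --- is a legitimate unpacking of one direction of the result the paper simply cites, namely \cite[Theorem~4.4]{Rordam} (the Pop--Popa theorem). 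Be aware, though, that the subfactor results you invoke are for factors with separable predual: this is exactly where the countability of $G$ enters the hypothesis, and your write-up never uses it; also, the relative Dixmier theorem for irreducible finite-index subfactors is in Popa's von Neumann algebra paper (Ann.\ Sci.\ \'Ec.\ Norm.\ Sup\'er., 1999) rather than in the $C^*$-paper \cite{Popa} cited here.

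The genuine gap is the necessity of $[G:H]<\infty$. What you offer there is not a proof but a description of where a proof would have to do work: you neither construct the positive element $a$ with the claimed spectral behaviour, nor show that no finite family $x_1,\dots,x_n\in\L(H,\sigma)$ can achieve $\sum_i x_i^*ax_i\geq 1$, and you say explicitly that this is ``where I expect the real work to be.'' Passing from infinite Pimsner--Popa index to the failure of R{\o}rdam's averaging criterion (equivalently, to a non-simple intermediate $C^*$-algebra) is precisely the hard direction of the Pop--Popa theorem; it is not a routine estimate, and your off-diagonal heuristic does not by itself exclude clever choices of the $x_i$ (they need not be unitaries or normalized, and $\sum_i x_i^*(a-E(a))x_i$ carries no a priori sign or size constraint). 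The paper sidesteps all of this: having the equivalence (i)~$\Leftrightarrow$~(ii), it reduces the \emph{Moreover} clause to the statement that the irreducible inclusion of separable II$_1$ factors $\L(H,\sigma)\subseteq\L(G,\sigma)$ is $C^*$-irreducible if and only if its Jones index is finite, and then quotes \cite[Theorem~4.4]{Rordam} together with the classical fact that this Jones index equals $[G:H]$. Replacing your contraposition sketch by that citation --- one direction of which you are in effect re-proving anyway --- is what is needed to close the gap.
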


\begin{proof}
(i) $\Longrightarrow$ (ii) follows readily from Lemmas~\ref{center} and~\ref{function-exists}. 

(ii) $\Longrightarrow$ (iii) is obvious.

(iii) $\Longrightarrow$ (i):
If there is a finite nontrivial $H$-conjugacy class $C$ in $G$ that is $\sigma$-regular w.r.t.~$H$,
then Lemma~\ref{function-exists} ensures that there exists a function $f\colon C\to\C$ such that $\sum_{g\in C}f(g)\lambda_\sigma(g)$ is nonscalar and belongs to 
$C^*_r(H,\sigma)'\cap C^*_r(G,\sigma)$.

Finally, assume that $G$ is countable. In view of the equivalence of (i) and (ii), we have to show that 
$\L(H, \sigma) \subseteq \L(G, \sigma)$ is $C^*$-irreducible if and only if $\L(H,\sigma)\subseteq \L(G,\sigma)$
 is irreducible and $[G:H] < \infty$. We may then assume that both $\L(H, \sigma)$ and $\L(G, \sigma)$ are separable II$_1$-factors. Then the desired equivalence follows from \cite[Theorem 4.4]{Rordam} (which R\o rdam attributes to Pop and Popa), taking into account that it is well-known that the Jones index $[\L(G, \sigma) : \L(H, \sigma)]$ is equal to $[G:H]$.
\end{proof}

For  regular irreducible inclusions of twisted group von~Neumann algebras, intermediate von Neumann algebras have a particularly simple description:
\begin{proposition}\label{galois-vN}
Assume that $H$ is normal in $G$ and  $(H \leq G, \sigma)$ satisfies the relative Kleppner condition, i.e., the inclusion $\L(H,\sigma)\subseteq \L(G,\sigma)$ is irreducible. Then the map $\Gamma \mapsto \L(\Gamma,\sigma)$ gives a
bijective correspondence between groups $\Gamma$ satisfying $H \leq \Gamma \leq G$, and  von Neumann algebras $N$ satisfying $\L(H,\sigma) \subseteq N\subseteq \L(G,\sigma)$. 
\end{proposition}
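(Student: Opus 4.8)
The plan is to set $M := \L(H,\sigma)$ and to exploit the fact that, since $H$ is normal in $G$, each $\lambda_\sigma(g)$ normalizes $M$: indeed $\lambda_\sigma(g)\lambda_\sigma(h)\lambda_\sigma(g)^* = \widetilde{\sigma}(g,h)\lambda_\sigma(ghg^{-1})$ with $ghg^{-1}\in H$, so $\operatorname{Ad}\lambda_\sigma(g)$ restricts to an automorphism $\alpha_g$ of $M$, $\lambda_\sigma(g)M = M\lambda_\sigma(g)$, and $g\mapsto\alpha_g$ is an \emph{honest} homomorphism $G\to\operatorname{Aut}(M)$ (the cocycle scalars $\sigma(g,g')$ disappearing under $\operatorname{Ad}$). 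Consequently $L^2(\L(G,\sigma))$ decomposes as an orthogonal direct sum of the $M$--$M$ sub-bimodules $\mathcal H_C := \overline{M\lambda_\sigma(g_C)}$, indexed by the cosets $C = g_C H\in G/H$, and the orthogonal projection onto $\mathcal H_C$ implements the coset-slice map $E_C(T) = E_H(T\lambda_\sigma(g_C)^*)\lambda_\sigma(g_C)$, where $E_H$ is the trace-preserving conditional expectation onto $M$. Note that $M$ is a factor, since $\Z(M) = M'\cap M\subseteq M'\cap\L(G,\sigma) = \C1$ by the assumed irreducibility (Proposition \ref{kleppner-commutant}). The map $\Gamma\mapsto\L(\Gamma,\sigma)$ is plainly well defined and order preserving, and it is injective because $\lambda_\sigma(g)\in\L(\Gamma,\sigma)$ if and only if $g\in\Gamma$ (compare Fourier coefficients), so that $\Gamma$ is recovered as $\{g : \lambda_\sigma(g)\in\L(\Gamma,\sigma)\}$. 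Everything therefore reduces to surjectivity.

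So let $M\subseteq N\subseteq\L(G,\sigma)$ be an intermediate von Neumann algebra and put $\Gamma := \{g\in G : \lambda_\sigma(g)\in N\}$. Since $\lambda_\sigma(g)\lambda_\sigma(g')$ and $\lambda_\sigma(g)^*$ are scalar multiples of $\lambda_\sigma(gg')$ and $\lambda_\sigma(g^{-1})$, the set $\Gamma$ is a subgroup of $G$; it contains $H$ because $M\subseteq N$, and $\L(\Gamma,\sigma)\subseteq N$ is immediate. I claim $N = \L(\Gamma,\sigma)$, for which it suffices to show that every $T\in N$ has its Fourier coefficients supported on $\Gamma$. The argument rests on two facts. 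First, for each coset $C = g_C H$ the set $J_C := \{x\in M : x\lambda_\sigma(g_C)\in N\}$ is a weakly closed two-sided ideal of $M$: it is weakly closed and a left ideal because $N$ is weakly closed and $M\subseteq N$, and it is a right ideal because $x\lambda_\sigma(g_C)a = x\alpha_{g_C}(a)\lambda_\sigma(g_C)$ with $\alpha_{g_C}$ a surjective endomorphism of $M$. As $M$ is a factor, $J_C$ is either $\{0\}$ or $M$, and $J_C = M$ exactly when $1\in J_C$, i.e. when $\lambda_\sigma(g_C)\in N$, i.e. when $C\subseteq\Gamma$. Second, and this is the crux, I will show that $N$ is invariant under every coset-slice map, $E_C(N)\subseteq N$. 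Granting this, fix $T\in N$ and a coset $C$ with $C\cap\Gamma = \emptyset$; then $E_C(T) = x\lambda_\sigma(g_C)\in N$ with $x = E_H(T\lambda_\sigma(g_C)^*)\in M$, so $x\in J_C = \{0\}$ and hence $E_C(T) = 0$. Thus $T$ carries no Fourier mass off $\Gamma$, giving $T\in\L(\Gamma,\sigma)$, as wanted.

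The invariance $E_C(N)\subseteq N$ is the main obstacle, and it is exactly where irreducibility enters beyond factoriality. My plan is to deduce it by showing that the coset bimodules $\mathcal H_C$ are pairwise disjoint as $M$--$M$ bimodules, i.e.\ $\operatorname{Hom}_{M\text{-}M}(\mathcal H_C,\mathcal H_{C'}) = 0$ whenever $C\neq C'$. A bounded bimodule intertwiner $\mathcal H_C\to\mathcal H_{C'}$ is given by right multiplication by some $b\in M$ satisfying $\alpha_{g_C}(a)\,b = b\,\alpha_{g_{C'}}(a)$ for all $a\in M$; applying $\alpha_{g_C}^{-1}$ and using that $\alpha$ is a homomorphism (so $\alpha_{g_C}^{-1}\alpha_{g_{C'}} = \alpha_{g_C^{-1}g_{C'}}$), a short computation turns such a $b$ into the element $Z := \alpha_{g_C}^{-1}(b)\,\lambda_\sigma(g_C^{-1}g_{C'})$, which lies in $M'\cap\L(G,\sigma)$ and is nonzero precisely when $b\neq 0$. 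Its Fourier support lies in the coset $C^{-1}C'\neq eH$, whereas $M'\cap\L(G,\sigma) = \C1$ is supported on $eH$; hence $Z = 0$ and $b = 0$.

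With disjointness in hand, the orthogonal projections $P_C$ onto the $\mathcal H_C$ are central in the commutant of the $M$--$M$ action on $L^2(\L(G,\sigma))$ (they are the isotypic projections of a multiplicity-free family). Since $L^2(N)$ is an $M$--$M$ sub-bimodule, its Jones projection $e_N$ belongs to that commutant and therefore commutes with each central $P_C$; thus $P_C$ preserves $L^2(N)$, which is precisely the statement $E_C(N)\subseteq N$. Combining this with the ideal dichotomy completes the proof that $N = \L(\Gamma,\sigma)$, and the displayed bijection follows. The only delicate point is the identification of bimodule intertwiners with the relative commutant in the third paragraph, which is routine given the identities \eqref{left-product2} and \eqref{right-product2} together with the fact that $g\mapsto\alpha_g$ is a homomorphism.
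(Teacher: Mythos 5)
Your proof is correct, but it takes a genuinely different route from the paper's. The paper gives a short Choda-style argument working with the trace-preserving conditional expectation $E_\tau^N$ onto the intermediate algebra $N$ itself: for $x\in\L(H,\sigma)$, normality of $H$ gives $\alpha_g(x)E_\tau^N(\lambda_\sigma(g))=E_\tau^N(\lambda_\sigma(g))x$, whence $\lambda_\sigma(g)^*E_\tau^N(\lambda_\sigma(g))\in\L(H,\sigma)'\cap\L(G,\sigma)=\C1$; thus $E_\tau^N(\lambda_\sigma(g))$ is a scalar multiple of $\lambda_\sigma(g)$, vanishing unless $g\in\Gamma_N:=\{g\in G:\lambda_\sigma(g)\in N\}$, and then $N=E_\tau^N(\L(G,\sigma))\subseteq\L(\Gamma_N,\sigma)\subseteq N$. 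You instead expect onto $\L(H,\sigma)$ via the coset Fourier slices $E_C$, prove disjointness of the coset bimodules $\mathcal{H}_C$ by converting an intertwiner into an element of the relative commutant supported off $H$, deduce that the projections $P_C$ are central in the commutant of the bimodule action so that the Jones projection $e_N$ commutes with them, and close with the ideal dichotomy in the factor $\L(H,\sigma)$. Both proofs spend irreducibility and normality exactly once each, but on different objects (the element $\lambda_\sigma(g)^*E_\tau^N(\lambda_\sigma(g))$ versus your element $Z$); expecting onto $N$ rather than onto $\L(H,\sigma)$ is what makes the paper's proof so much shorter, since it renders both your Fact 1 (factoriality plus the weakly closed ideal argument) and your Fact 2 (the whole bimodule apparatus) unnecessary. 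What your route buys is that it is precisely the pattern behind the Galois-correspondence results for crossed products that the paper mentions as alternative proofs (Izumi--Longo--Popa, Cameron--Smith), and it isolates reusable structural facts: disjointness of the coset bimodules and centrality of the $P_C$ in the commutant of the $\L(H,\sigma)$-bimodule action. One point you should make explicit: the passage from ``$P_C$ preserves $L^2(N)$'' to ``$E_C(N)\subseteq N$'', and likewise the final step from ``Fourier support in $\Gamma$'' to ``$T\in\L(\Gamma,\sigma)$'', both rest on the standard lemma that a bounded $T\in\L(G,\sigma)$ with $T\delta_e\in L^2(N)$ lies in $N$ (combine $e_N(T\delta_e)=E_\tau^N(T)\delta_e$ with injectivity of $T\mapsto T\delta_e$); this is routine, but it is a lemma invoked twice, not a tautological restatement.
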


\begin{proof} 
This result could be proven by using results from \cite{ILP} or \cite{CS}, or by decomposing $\L(G, \sigma)$ as a twisted crossed product and applying \cite[Lemma 3.3]{Jiang}. For the ease of the reader, we sketch a direct proof, close in spirit to the proof of \cite[Corollary 4]{Choda}. 

Clearly, if $H \leq \Gamma \leq G$, then $N_\Gamma:=  \L(\Gamma,\sigma)$ is a subfactor of  $\L(G,\sigma)$ containing $\L(H,\sigma)$. Conversely, let $N$ be a von Neumann algebra satisfying
$\L(H,\sigma) \subseteq N\subseteq \L(G,\sigma)$, and let $E_\tau^N$ denote the faithful normal conditional expectation from $\L(G,\sigma)$ onto $N$ associated to $\tau$.
 We note that for any $g\in G$, we have that $\lambda_\sigma(g)^*E_\tau^N(\lambda_\sigma(g)) \in \C 1$.
 
Indeed, let $x \in \L(H, \sigma)$. Then  $\alpha_g(x): = \lambda_\sigma(g) x \lambda_\sigma(g)^* \in \L(H, \sigma)$ (because $H$ is normal in G), so we get
\[\alpha_g(x) E_N^\tau(\lambda_\sigma(g)) = E_N^\tau(\alpha_g(x)\lambda_\sigma(g))=E_N^\tau(\lambda_\sigma(g)x) = E_N^\tau(\lambda_\sigma(g)) x.\]
 This implies that  \[x \, \lambda_\sigma(g)^* E_N^\tau(\lambda_\sigma(g)) = \lambda_\sigma(g)^*E_N^\tau(\lambda_\sigma(g))\, x.\]  
Thus we get that  $ \lambda_\sigma(g)^* E_N^\tau(\lambda_\sigma(g)) \in \L(H,\sigma)'\cap \,\L(G,\sigma)=\C1$, as asserted.

Now, set $\Gamma_N:= \{ g\in G : \lambda_\sigma(g) \in N\}$. Then $\Gamma_N$ is easily seen to be a subgroup of $G$ containing $H$, and the observation above gives that
$E_\tau^N(\lambda_\sigma(g)) = 0$ whenever $g\not \in \Gamma_N$. 
This readily  implies that $E_\tau^N$ maps $\L(G, \sigma)$ into $\L(\Gamma_N, \sigma)$, hence that 
\[ N= E_N^\tau(\L(G, \sigma)) \subseteq \L(\Gamma_N, \sigma) \subseteq N. \] Thus $N=\L(\Gamma_N, \sigma) =N_{\Gamma_N}$.  This shows that the map  $\Gamma \mapsto N_\Gamma$ is surjective. 
To see that it is injective, it suffices to show that $\Gamma= \Gamma_{N_\Gamma}$ whenever  $H \leq \Gamma \leq G$. By definition, the inclusion $\subseteq $ holds. 
On the other hand, let $g \in \Gamma_{N_\Gamma}$, i.e., $g\in G$ and $\lambda_\sigma(g)  \in N_\Gamma = \L(\Gamma, \sigma)$. Then we have \[\delta_g = \lambda_\sigma(g)\delta_e \in \L(\Gamma, \sigma)\delta_e \subseteq \{\xi \in \ell^2(G) : \xi(s) = 0 \text{ for all } s \in G\setminus \Gamma\},\] which is possible only  if $g\in \Gamma$. 
  \end{proof}
  \begin{remark} The assumption in Proposition \ref{galois-vN} that $H$ is normal can not be removed. 
   This can be seen by using an observation due to Jiang in \cite[Section 4]{Jiang}.  Let $K$ be an infinite simple group, and set $G:= K\times K$ and $H=\{(k, k) : k\in K\}$. Then $H$ is a maximal subgroup of $G$, and as shown by Jiang, $\L(H)$ is not maximal in $\L(G)$, i.e., there exists a von Neumann subalgebra $N$  of $\L(G)$ containing $\L(H)$ such that $\L(H)\neq N \neq \L(G)$. Since $K$ is icc, it is easy to verify that $G$ is icc relatively to $H$.  
  \end{remark}
  
Recall that a $C^*$-algebra is said to be \emph{prime} if nonzero ideals have nonzero intersection.
As shown in \cite[Theorem~2.7]{primeness}, $(G,\sigma)$ satisfies Kleppner's condition if and only if $C^*_r(G,\sigma)$ is prime, if and only if $C^*_r(G,\sigma)$ has trivial center.
In this connection, we note that if  $B\subseteq A$ is a unital inclusion of $C^*$-algebras, then the following conditions are equivalent:
\begin{itemize}
	\item[(i)] The inclusion $B\subseteq A$ is irreducible.
	\item[(ii)] Every $C^*$-algebra $C$ satisfying $B\subseteq C\subseteq A$ has trivial center.
\end{itemize}
Indeed, if (i) holds and $C$ is an intermediate $C^*$-algebra, then $C' \cap C\subseteq B' \cap A=\C1$.
Conversely, if (i) does not hold, and we choose a nonscalar $a\in B' \cap A$, then the $C^*$-algebra $C$ generated by $B$ and $a$ has a nontrivial center.

Using Proposition~\ref{kleppner-commutant} we get the following generalization:

\begin{corollary} The following conditions are equivalent:
\begin{itemize}
\item[(i)] $(H \leq G, \sigma)$ satisfies the relative Kleppner condition.

\item[(ii)] Every $C^*$-algebra $A$ satisfying $C^*_r(H,\sigma)\subseteq A\subseteq C^*_r(G,\sigma)$ is prime.

\item[(iii)] Every $C^*$-algebra $A$ satisfying $C^*_r(H,\sigma)\subseteq A\subseteq C^*_r(G,\sigma)$ has trivial center.
\end{itemize}
\end{corollary}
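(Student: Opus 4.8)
The plan is to prove the cycle (i) $\Rightarrow$ (ii) $\Rightarrow$ (iii) $\Rightarrow$ (i), two links of which are nearly immediate. For (iii) $\Rightarrow$ (i) I would argue contrapositively: if (i) fails, then by Proposition \ref{kleppner-commutant} the inclusion $C^*_r(H,\sigma) \subseteq C^*_r(G,\sigma)$ is not irreducible, so there is a nonscalar self-adjoint $a \in C^*_r(H,\sigma)' \cap C^*_r(G,\sigma)$; the $C^*$-subalgebra generated by $C^*_r(H,\sigma)$ and $a$ is then an intermediate algebra having $a$ in its center, so (iii) fails. This is exactly the converse direction of the observation preceding the corollary. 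For (ii) $\Rightarrow$ (iii) I would invoke the elementary fact that a prime unital $C^*$-algebra has trivial center: a nonscalar self-adjoint central element $z$ produces, via continuous functional calculus, nonzero central elements $z_1, z_2$ with $z_1 z_2 = 0$, and then $\overline{A z_1}$ and $\overline{A z_2}$ are nonzero closed ideals with zero product, contradicting primeness.

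The substance of the corollary is the implication (i) $\Rightarrow$ (ii). Here I would work inside the standard representation of $C^*_r(G,\sigma)$ on $\ell^2(G)$. Let $A$ be any intermediate $C^*$-algebra and let $N := A''$ be its weak closure in $B(\ell^2(G))$. Since $C^*_r(H,\sigma) \subseteq A \subseteq C^*_r(G,\sigma)$, taking bicommutants yields $\L(H,\sigma) \subseteq N \subseteq \L(G,\sigma)$. By Proposition \ref{kleppner-commutant}, the relative Kleppner condition gives $\L(H,\sigma)' \cap \L(G,\sigma) = \C 1$, and therefore $Z(N) = N \cap N' \subseteq \L(H,\sigma)' \cap \L(G,\sigma) = \C 1$; that is, $N$ is a factor.

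The step I expect to be the main obstacle is then to deduce that $A$ itself is prime from the fact that its weak closure $N$ is a factor. Given nonzero closed two-sided ideals $I, J$ of $A$, I would pass to their weak closures $\overline I, \overline J$ inside $N$; these are weakly closed two-sided ideals of $N$, the ideal property being checked using that left and right multiplication by a fixed operator are weakly continuous. Because $N$ is a factor, each such ideal is either $\{0\}$ or $N$, and since $I \neq 0$ forces $\overline I = N$ (and likewise $\overline J = N$), the hypothesis $IJ = 0$ would give, for every fixed $x \in I$, that $x \overline J = xN = 0$ and hence $x = 0$ (as $1 \in N$); thus $I = 0$, a contradiction. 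Consequently $I \cap J = \overline{IJ} \neq 0$, so $A$ is prime. The single nonelementary ingredient I would rely on is the standard fact that the weakly closed two-sided ideals of a factor are trivial; the rest is bookkeeping with the separate weak continuity of multiplication.
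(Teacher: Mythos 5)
Your proposal is correct and follows essentially the same route as the paper: the equivalence with irreducibility via Proposition \ref{kleppner-commutant}, the bicommutant argument showing $A''$ is a factor for (i) $\Rightarrow$ (ii), and the observation preceding the corollary for (iii) $\Rightarrow$ (i). The only difference is that you spell out the standard facts the paper cites as well known (a $C^*$-algebra whose weak closure is a factor is prime, and a prime unital $C^*$-algebra has trivial center), and your arguments for these are correct.
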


\begin{proof}
(i)~$\Longrightarrow$~(ii): Assume (i) holds. Then Proposition~\ref{kleppner-commutant} gives that $\L(H,\sigma)\subseteq \L(G,\sigma)$ is irreducible. Now, if $A$ is a $C^*$-algebra satisfying $C^*_r(H,\sigma)\subseteq A\subseteq C^*_r(G,\sigma)$, then we get $\L(H,\sigma)\subseteq A'' \subseteq \L(G,\sigma)$, so $A''$ is a factor; as is well-known, this implies that $A$ is prime. Hence, (ii) holds. 

(ii)~$\Longrightarrow$~(iii): Since every prime $C^*$-algebra has trivial center, this is clear.

(iii)~$\Longrightarrow$~(i):  Assume (iii) holds. Then, as observed above,
$C^*_r(H,\sigma)\subseteq C^*_r(G,\sigma)$ is irreducible. Hence, (i) holds by Proposition~\ref{kleppner-commutant}.
\end{proof}

\section{$C^*$-irreducibility and reduced twisted $C^*$-crossed products} \label{irred-crpro}

In this section, we assume that $(A, G, \alpha,\sigma)$ is a  unital, discrete, twisted 
C$^*$-dynamical system, i.e., $A$ is a $C^*$-algebra with unit $1$, 
$G$ is a discrete group with identity $e$ and $(\alpha,\sigma)$ is a  twisted
action of $G$ on $A$, that is, $\alpha$ is a map from $G$ into the group of $*$-automorphisms of $A$,
and  $\sigma$ is a map from $G \times G$ into the unitary group of $A$, satisfying
\[
\begin{gathered}
\alpha_g \circ \alpha_h = \operatorname{Ad}(\sigma(g, h)) \circ  \alpha_{gh}, \\
\sigma(g,h) \sigma(gh,k) = \alpha_g(\sigma(h,k)) \sigma(g,hk), \\
\sigma(g,e)  = \sigma(e,g) = 1,
\end{gathered}
\]
for all $g,h,k \in G$, where $\operatorname{Ad}(v)$ denotes the (inner) $*$-automorphism  of $A$ implemented by a unitary $v$ in $A$.

For simplicity, we will denote the reduced crossed product associated to $(A,G,\alpha,\sigma)$ by $A\rtimes_r G$. (If confusion may arise, we will denote it by $A\rtimes_{(\alpha, \sigma),r} G$.)   
As a $C^*$-algebra, $A\rtimes_r G$ is generated  by (a copy of) $A$  and a family $\{u(g) : g\in G\}$ of unitaries satisfying
\[
\alpha_g(a) = u(g)a u(g)^* \quad \text{ and } \quad u(g) u(h) = \sigma(g,h) u(gh)
\]
for all $g,h \in G$ and $a \in A$. Moreover, 
there exists a faithful conditional expectation $E \colon A\rtimes_r G \to A$ such that $E(u(g)) =0$ for  all $g \in G\,,\,  g \neq e\,.$

We recall that $(\alpha, \sigma)$ is said to be \emph{outer} if $\alpha_g$ is outer for each $g\in G\setminus\{e\}$. 
The twisted version of \cite[Theorem~5.8]{Rordam} is as follows:
\begin{theorem}\label{twisted-outer}
The following conditions are equivalent:
\begin{itemize}
\item[(i)] $A\subseteq A\rtimes_r G$ is $C^*$-irreducible.
\item[(ii)] $A$ is simple and $(\alpha, \sigma)$ is outer.
\item[(iii)] $A$ is simple and $A' \cap (A\rtimes_r G) = \C1$.
\end{itemize}
Moreover, if $A$ has the Dixmier property, then any of the above conditions implies that $A\subseteq A\rtimes_r G$ has the relative Dixmier property $($as defined in \cite{Popa}$)$.
\end{theorem}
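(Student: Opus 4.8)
The plan is to run the cycle (i)~$\Rightarrow$~(iii)~$\Rightarrow$~(ii)~$\Rightarrow$~(i) and then treat the relative Dixmier clause separately, keeping track at each stage of where, if anywhere, the cocycle $\sigma$ intervenes. The first implication is immediate: taking $C=A$ exhibits $A$ as an intermediate $C^*$-algebra, so $C^*$-irreducibility forces $A$ to be simple, while the general fact that $C^*$-irreducible inclusions are irreducible \cite[Remark~3.8]{Rordam} yields $A'\cap(A\rtimes_r G)=\C1$. For (iii)~$\Rightarrow$~(ii) I would argue contrapositively: if $\alpha_g=\operatorname{Ad}(v)$ for some $g\neq e$ and some unitary $v\in A$, then the unitary $w:=v^*u(g)$ commutes with every $a\in A$ (a one-line computation using $u(g)au(g)^*=\alpha_g(a)$ and $\alpha_g(a)=vav^*$), so $w\in A'\cap(A\rtimes_r G)$; since $E(w)=v^*E(u(g))=0$ while $w\neq0$, the element $w$ is nonscalar, contradicting (iii). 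Observe that $\sigma$ is invisible to both of these steps.

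The substance of the theorem is (ii)~$\Rightarrow$~(i), where I would follow R\o rdam's proof of \cite[Theorem~5.8]{Rordam}. The reduction is to the following relative fullness property: for every nonzero $z\in(A\rtimes_r G)_+$ and every $\varepsilon>0$ there exist finitely many $a_1,\dots,a_n\in A$ with $\|\sum_i a_i^*za_i-1\|<\varepsilon$. This property suffices for $C^*$-irreducibility, since given an intermediate $C^*$-algebra $C$ and a nonzero ideal $J$ of $C$ one picks a nonzero positive $z\in J$; the $a_i$ then lie in $A\subseteq C$, so $\sum_i a_i^*za_i\in J$ is within $\varepsilon<1$ of $1$, hence invertible in $C$, forcing $J=C$ and thus $C$ simple. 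To build the $a_i$ I would set $d:=E(z)$, a nonzero positive element of $A$ by faithfulness of $E$, and split the construction into an averaging step and a fullness step. The averaging step produces a positive contraction $e\in A$ with $\|eze-ede\|<\varepsilon$ and $\|ede\|>\|d\|-\varepsilon$, thereby suppressing the off-diagonal part of $z$; the fullness step uses simplicity of $A$ to realise the nonzero positive element $ede$ as a full element, so that $1=\sum_j b_j^*(ede)b_j$ for suitable $b_j\in A$. Putting $a_j:=eb_j$ and combining the estimates (choosing $\varepsilon$ small relative to the fullness data) gives $\sum_j a_j^*za_j$ as close to $1$ as we please.

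The main obstacle is the averaging step, which is where outerness is used and where one must check that the twisting causes no harm. The relevant tool is a Kishimoto-type lemma: since $A$ is simple, outerness of $\alpha_g$ coincides with proper outerness, and this yields, for $z$ of finite Fourier support, a positive contraction $e$ making $\|e\,z_g\,\alpha_g(e)\|$ small for every $g\neq e$ in the support while keeping $\|eze\|$ near $\|d\|$. The point I would emphasise is that $\sigma$ enters the crossed product only through the products $u(g)u(h)=\sigma(g,h)u(gh)$ and never through the single-variable relation $u(g)au(g)^*=\alpha_g(a)$; consequently the compression $e\,z_g u(g)\,e=e\,z_g\,\alpha_g(e)\,u(g)$, and hence the whole estimate, depends only on $\alpha$, so Kishimoto's argument carries over unchanged once $z$ has been approximated by an element of finite Fourier support.

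For the final assertion, suppose in addition that $A$ has the Dixmier property. I would obtain the relative Dixmier property by running the same averaging over the unitary group $\mathcal U(A)$. A diagonal argument suppressing the off-diagonal coefficients $z_g u(g)$ for all $g\neq e$ simultaneously shows that $E(z)$ lies in the norm-closed convex hull $K$ of $\{uzu^*:u\in\mathcal U(A)\}$. Applying the Dixmier property of $A$ to $E(z)$ then places an element of $Z(A)=\C1$ (trivial because $A$ is simple) in the norm-closed convex hull of $\{vE(z)v^*:v\in\mathcal U(A)\}$, which is contained in $K$ because $K$ is convex, norm-closed and invariant under conjugation by $\mathcal U(A)$. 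Hence $K\cap\C1\neq\emptyset$, which is precisely the relative Dixmier property for $A\subseteq A\rtimes_r G$ in the sense of \cite{Popa}.
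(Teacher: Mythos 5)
Your proof is correct and follows essentially the same route as the paper: the easy implications via R\o rdam's Remark~3.8 and the commutant element $v^*u(g)$ killed by $E$, then for (ii)$\Rightarrow$(i) the Kishimoto-type averaging (with the same key observation that $\sigma$ never enters the relation $u(g)au(g)^*=\alpha_g(a)$) combined with fullness from simplicity of $A$, and a Popa-style averaging argument for the relative Dixmier clause. The only difference is one of packaging: where the paper simply cites R\o rdam's pinching machinery (Definition~3.13 and Proposition~3.15) and Popa's Corollary~4.1, you unpack those citations into explicit arguments.
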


\begin{proof}
	
(i) $\Longrightarrow$ (iii): Follows from \cite[Remark~3.8]{Rordam}.

(iii) $\Longrightarrow$ (ii): Same argument as in the proof of \cite[Theorem~5.8]{Rordam}.

(ii) $\Longrightarrow$ (i):	Assume that (ii) holds. As shown in \cite[Theorem~3.2]{Bedos91}, Kishimoto's result that $A\rtimes_r G$ is simple remains true for twisted actions. In fact, arguing as in the proof of \cite[Theorem~5.8]{Rordam}, one may show that $A\subseteq A\rtimes_r G$ is $C^*$-irreducible.  Indeed, let $B_0$ denote the $*$-algebra generated by $A$ and $\{u(g):g \in G\}$, i.e., $B_0=\operatorname{Span}\{au(g) : a \in A, g\in G\}$.  Using \cite[Lemma 3.2]{Kish} (or \cite[Lemma 7.1]{OP} if $A$ is separable), and the denseness of $B_0$ in $A\rtimes_r G$, one deduces that  $A \subseteq A\rtimes_r G$ has the pinching property with respect to $E$ (cf.~\cite[Definition~3.13]{Rordam}). The conclusion follows then from  \cite[Proposition~3.15]{Rordam}.
	
The last statement follows from \cite[Corollary~4.1]{Popa}.
\end{proof}

\medskip 
Cameron and Smith's result about intermediate $C^*$-algebras in simple reduced twisted $C^*$-crossed products \cite[Theorem~4.4]{Cameron-Smith} may now be reformulated as follows.
\begin{theorem}\label{Cameron-Smith}
Assume that $A\subseteq A \rtimes_r G$ is $C^*$-irreducible. 
Then there is a bijective correspondence between subgroups $\Gamma$ of $G$ and $C^*$-algebras $B$ satisfying $A\subseteq B\subseteq A\rtimes_r G$, given by
\[
\Gamma \mapsto A\rtimes_r \Gamma.
\]
\end{theorem}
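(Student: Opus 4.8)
The plan is to derive the statement directly from \cite[Theorem~4.4]{Cameron-Smith} once its hypotheses have been matched to the present situation. By Theorem~\ref{twisted-outer}, the assumption that $A \subseteq A \rtimes_r G$ is $C^*$-irreducible is equivalent to $A$ being simple and $(\alpha,\sigma)$ being outer, and in particular $A \rtimes_r G$ is then simple by \cite[Theorem~3.2]{Bedos91}. These are exactly the standing hypotheses under which Cameron and Smith establish their Galois-type correspondence, so the essential part of the work lies in reading off their result through this dictionary.

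First I would verify that the assignment $\Gamma \mapsto A \rtimes_r \Gamma$ is well defined and injective. For a subgroup $\Gamma \leq G$, restricting the twisted action to $\Gamma$ gives a twisted $C^*$-dynamical system whose reduced crossed product $A \rtimes_r \Gamma$ is canonically identified with the $C^*$-subalgebra of $A \rtimes_r G$ generated by $A$ and $\{u(g) : g \in \Gamma\}$; since $e \in \Gamma$, this contains $A$, so the map takes values among the intermediate $C^*$-algebras. Injectivity is elementary: one recovers $\Gamma$ from $B = A \rtimes_r \Gamma$ as $\{g \in G : u(g) \in B\}$, because the faithful conditional expectation $E$ restricts to the canonical expectation $A \rtimes_r \Gamma \to A$, whence every $x \in A \rtimes_r \Gamma$ satisfies $E(x\, u(h)^*) = 0$ for $h \notin \Gamma$; applying this to $x = u(g)$ and using $E(u(g)\,u(g)^*) = 1 \neq 0$ forces $g \in \Gamma$.

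Surjectivity is where the real content resides, and it is precisely what \cite[Theorem~4.4]{Cameron-Smith} supplies: given an intermediate $C^*$-algebra $B$ with $A \subseteq B \subseteq A \rtimes_r G$, their theorem produces a subgroup $\Gamma$ of $G$ for which $B = A \rtimes_r \Gamma$. I therefore expect the main obstacle to be not a fresh computation but a careful comparison of hypotheses and notation: confirming that the regularity and outerness requirements in \cite{Cameron-Smith} are subsumed by $C^*$-irreducibility via Theorem~\ref{twisted-outer}, and that their parametrization of intermediate algebras agrees with the map $\Gamma \mapsto A \rtimes_r \Gamma$. Once this is in place, combining well-definedness, injectivity, and their surjectivity yields the asserted bijection.
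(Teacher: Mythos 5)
Your proposal is correct and follows essentially the same route as the paper, whose entire proof consists of invoking Theorem~\ref{twisted-outer} to conclude that $A$ is simple and $(\alpha,\sigma)$ is outer, and then citing \cite[Theorem~4.4]{Cameron-Smith}. Your additional explicit verification of well-definedness and injectivity via the conditional expectation $E$ is sound but is subsumed in Cameron and Smith's statement, so the paper leaves it implicit.
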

\begin{proof} Since $A$ is simple and  $(\alpha,\sigma)$ is outer by Theorem \ref{twisted-outer}, the conclusion follows from \cite[Theorem~4.4]{Cameron-Smith}.
\end{proof}

Assume that $H$ is a normal subgroup of $G$ and set $K:= G/H$. By restricting $\alpha$ to $H$ and $\sigma$ to $H\times H$, we get a twisted $C^*$-dynamical system  $(A,H,\alpha,\sigma)$. The associated reduced crossed product $A\rtimes_{(\alpha, \sigma),r} H$ may then be identified with the $C^*$-subalgebra of $A\rtimes_{(\alpha, \sigma),r} G$ generated by $A$ and $\{u(h) : h \in H\}$.  We recall from \cite[Theorem~2.1]{Bedos91} that there exists an induced twisted action $(\beta, \omega)$ of $K$ on $A\rtimes_{(\alpha, \sigma),r} H$ such that 
\[ (A\rtimes_{(\alpha, \sigma),r} H) \rtimes_{(\beta, \omega), r} K \, \simeq  \, A\rtimes_{(\alpha, \sigma),r} G\] under a $*$-isomorphism which maps $A\rtimes_{(\alpha, \sigma),r} H$ onto its canonical copy in $A\rtimes_{(\alpha, \sigma),r} G$.

Combining this decomposition result with Theorem \ref{twisted-outer} and Theorem \ref{Cameron-Smith}, we obtain:

 \begin{theorem} \label{outer-quotient}
Let $H$ be a normal subgroup of $G$.
Then the following conditions are equivalent:
\begin{itemize}
\item[(i)] $A\rtimes_r H\subseteq A\rtimes_r G$ is $C^*$-irreducible.
\item[(ii)] $A\rtimes_r H$ is simple and the induced twisted action of $G/H$ on $A\rtimes_r H$ is outer.
\item[(iii)] $A\rtimes_r H$ is simple and $(A\rtimes_r H)' \cap (A\rtimes_r G) = \C1$.
\end{itemize}
Moreover, if $A\rtimes_r H \subseteq A\rtimes_r G$ is $C^*$-irreducible, 
then there is a bijective correspondence between groups $\Gamma$ satisfying $H \leq \Gamma \leq G$, and $C^*$-algebras $B$ satisfying $A\rtimes_r  H \subseteq B\subseteq A\rtimes_r G$, 
given by
\[
\Gamma \mapsto A\rtimes_r \Gamma.
\]
\end{theorem}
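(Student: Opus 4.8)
The plan is to exploit the decomposition of $A\rtimes_r G$ as an iterated reduced twisted crossed product so that everything reduces to Theorem \ref{twisted-outer} and Theorem \ref{Cameron-Smith}. Write $B:=A\rtimes_{(\alpha,\sigma),r}H$ and $K:=G/H$. The result quoted from \cite[Theorem~2.1]{Bedos91} furnishes an induced twisted action $(\beta,\omega)$ of $K$ on $B$ together with a $*$-isomorphism
\[
B\rtimes_{(\beta,\omega),r}K \;\simeq\; A\rtimes_{(\alpha,\sigma),r}G
\]
that carries the canonical copy of $B=A\rtimes_r H$ inside $B\rtimes_r K$ onto the canonical copy of $A\rtimes_r H$ inside $A\rtimes_r G$. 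Consequently the inclusion $A\rtimes_r H\subseteq A\rtimes_r G$ is isomorphic, as an inclusion, to $B\subseteq B\rtimes_{(\beta,\omega),r}K$, and it suffices to analyze the latter.

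First I would apply Theorem \ref{twisted-outer} to the twisted $C^*$-dynamical system $(B,K,\beta,\omega)$. This gives the equivalence of the three statements: $B\subseteq B\rtimes_r K$ is $C^*$-irreducible; $B$ is simple and $(\beta,\omega)$ is outer; $B$ is simple and $B'\cap(B\rtimes_r K)=\C1$. Transporting these across the isomorphism above yields exactly conditions (i), (ii) and (iii) of the theorem, since $B=A\rtimes_r H$, the action $(\beta,\omega)$ is by construction the induced twisted action of $G/H$ on $A\rtimes_r H$, and $B'\cap(B\rtimes_r K)$ corresponds to $(A\rtimes_r H)'\cap(A\rtimes_r G)$. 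Thus the equivalence of (i)--(iii) requires no work beyond this translation.

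For the final assertion, I would assume (i), so that $B\subseteq B\rtimes_r K$ is $C^*$-irreducible, and invoke Theorem \ref{Cameron-Smith} for $(B,K,\beta,\omega)$. This produces a bijective correspondence $L\mapsto B\rtimes_r L$ between subgroups $L$ of $K=G/H$ and the intermediate $C^*$-algebras of $B\subseteq B\rtimes_r K$. Composing with the correspondence-theorem bijection between subgroups $L\leq G/H$ and subgroups $\Gamma$ with $H\leq\Gamma\leq G$ (via $L=\Gamma/H$) gives a bijection between such $\Gamma$ and the intermediate $C^*$-algebras of $A\rtimes_r H\subseteq A\rtimes_r G$. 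It then remains to identify $B\rtimes_r(\Gamma/H)$ with $A\rtimes_r\Gamma$ inside $A\rtimes_r G$: the subalgebra $B\rtimes_{(\beta,\omega),r}(\Gamma/H)$ is generated by $B=A\rtimes_r H$ together with the implementing unitaries indexed by $\Gamma/H$, which (for a choice of coset representatives) correspond under the isomorphism to the unitaries $u(g)$ with $g$ ranging over those representatives; since $A\rtimes_r H$ already contains $\{u(h):h\in H\}$, the generated subalgebra is precisely the one generated by $A$ and $\{u(g):g\in\Gamma\}$, namely $A\rtimes_r\Gamma$.

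The only genuinely delicate point is this last identification. One must verify that restricting the induced twisted action $(\beta,\omega)$ of $G/H$ to the subgroup $\Gamma/H$ yields exactly the induced twisted action attached to the pair $H\trianglelefteq\Gamma$, so that $B\rtimes_r(\Gamma/H)$ is not merely abstractly $*$-isomorphic to $A\rtimes_r\Gamma$ but coincides with its canonical copy as a subalgebra of $A\rtimes_r G$. This is a naturality property of the induced-action construction of \cite[Theorem~2.1]{Bedos91}, and I would establish it by tracking the implementing unitaries through the isomorphism rather than by recomputing the twisting data.
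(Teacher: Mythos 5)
Your proposal is correct and follows exactly the paper's route: the paper likewise decomposes $A\rtimes_r G$ as $(A\rtimes_r H)\rtimes_{(\beta,\omega),r}(G/H)$ via \cite[Theorem~2.1]{Bedos91} and then cites Theorem \ref{twisted-outer} for the equivalence of (i)--(iii) and Theorem \ref{Cameron-Smith} for the Galois correspondence. The only difference is that you spell out the identification of $B\rtimes_r(\Gamma/H)$ with the canonical copy of $A\rtimes_r\Gamma$, a point the paper leaves implicit.
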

In turn, combining this result with \cite[Lemmas 3.3 and 3.4]{Bedos91}, we obtain the following strengthening of \cite[Theorem 3.5]{Bedos91} (which only asserts the simplicity of $A\rtimes_r G$):
\begin{theorem} \label{icc-subgroup}
Let $H$ be a normal subgroup of $G$, and assume that the following conditions are satisfied:
\begin{itemize}
\item $A\rtimes_r H$ is simple. 
\item $A$ has a faithful $G$-invariant state.
\item $H$ is  icc and  $C_G(H)$ is trivial \textup{(}i.e., $G$ is icc relatively to $H$ by Proposition \ref{icc-rel}\textup{)}.
\end{itemize}
Then 
$A\rtimes_r H\subseteq A\rtimes_r G$ is $C^*$-irreducible.
\end{theorem}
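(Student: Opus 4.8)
The plan is to deduce this from Theorem \ref{outer-quotient}. Since $A\rtimes_r H$ is simple by hypothesis, it suffices to verify condition (iii) there, namely that the relative commutant $(A\rtimes_r H)'\cap(A\rtimes_r G)$ reduces to $\C1$ (equivalently, by the equivalence (ii)$\Leftrightarrow$(iii), that the induced twisted action of $G/H$ on $A\rtimes_r H$ is outer). The faithful $G$-invariant state on $A$, call it $\phi$, enters through the faithful state $\tau:=\phi\circ E$ on $A\rtimes_r G$, where $E\colon A\rtimes_r G\to A$ is the canonical conditional expectation. A short computation using the $G$-invariance of $\phi$ shows that $\tau$ is invariant under each automorphism $\operatorname{Ad}(u(g))$, since $E(u(g)xu(g)^*)=\alpha_g(E(x))$ and $\phi\circ\alpha_g=\phi$.

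The analytic heart of the argument is a relative Fourier analysis, in the spirit of Lemma \ref{center} but now with coefficients in $A$. Given $c\in(A\rtimes_r H)'\cap(A\rtimes_r G)$, I would record its Fourier coefficients $c_g:=E(cu(g)^*)\in A$ and observe that in the GNS representation of $\tau$ the subspaces $\overline{Au(g)}$ are mutually orthogonal with $\|au(g)\|_\tau=\phi(a^*a)^{1/2}$, so that
\[
\sum_{g\in G}\phi(c_g^*c_g)=\tau(c^*c)\le\|c\|^2<\infty.
\]
Commuting with $u(h)$ for $h\in H$ means $\operatorname{Ad}(u(h))c=c$; since $\operatorname{Ad}(u(h))$ preserves $\tau$ and carries the $g$-th subspace onto the $(hgh^{-1})$-th one, it is isometric from mode to mode, whence $\phi(c_g^*c_g)$ is constant along each $H$-conjugacy class. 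Combined with the finiteness of the sum above, this forces $c_g=0$ (using faithfulness of $\phi$) for every $g$ whose $H$-conjugacy class is infinite; that is, $c$ is supported on $FC_G(H)$. The bookkeeping behind the covariance of the coefficients and this summability estimate is exactly what \cite[Lemmas~3.3 and~3.4]{Bedos91} provide in the twisted setting, and I would cite them rather than redo the computation.

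To conclude, the hypotheses that $H$ is icc and $C_G(H)$ is trivial give, via Proposition \ref{icc-rel}, that $G$ is icc relatively to $H$, i.e.\ $FC_G(H)=\{e\}$. Hence $c_g=0$ for all $g\neq e$ and $c=c_e\in A$. As $c$ then commutes with all of $A\rtimes_r H$, it lies in the center of $A\rtimes_r H$, which is $\C1$ by simplicity; thus $(A\rtimes_r H)'\cap(A\rtimes_r G)=\C1$ and Theorem \ref{outer-quotient} yields the $C^*$-irreducibility of $A\rtimes_r H\subseteq A\rtimes_r G$.

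I expect the main obstacle to be the operator-valued two-cocycle $\sigma$: the relation $c_{hgh^{-1}}=\alpha_h(c_g)\,w$ produced by commuting with $u(h)$ carries a nontrivial unitary factor $w\in A$, so the conjugation-invariance of $g\mapsto\phi(c_g^*c_g)$ is not visible at the level of individual coefficients when $\phi$ is merely an invariant state rather than a trace. The point is to avoid manipulating $w$ explicitly and instead argue at the Hilbert-space level --- $\operatorname{Ad}(u(h))$ is a $\tau$-preserving automorphism permuting an orthogonal family of mode subspaces, hence isometric between matched modes --- which makes the invariance of the norms $\phi(c_g^*c_g)$ automatic. Keeping track of which cocycle identities are actually needed (and checking that $\tau$ is genuinely $\operatorname{Ad}(u(g))$-invariant despite these factors) is the delicate part, and is where the two lemmas from \cite{Bedos91} do the real work.
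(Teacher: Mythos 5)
Your proposal is correct and is essentially the paper's own proof: the paper obtains the theorem precisely by combining Theorem \ref{outer-quotient} with \cite[Lemmas 3.3 and 3.4]{Bedos91}, which encapsulate exactly the Fourier-coefficient argument relative to the faithful $G$-invariant state that you sketch and then cite. The only cosmetic difference is that you verify condition (iii) of Theorem \ref{outer-quotient} (triviality of $(A\rtimes_r H)'\cap(A\rtimes_r G)$), whereas the paper's citation targets the equivalent outerness condition (ii).
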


\begin{remark} The assumption in Theorem \ref{icc-subgroup} that $A$ has a faithful $G$-invariant state is of a technical nature for the proof of \cite[Lemmas 3.3]{Bedos91} to go through, and it might be redundant. Anyhow, it is for example satisfied when $A$ has a unique faithful tracial state.
\end{remark}

A noteworthy consequence of Theorem \ref{icc-subgroup} is:
\begin{corollary} \label{C*s-sbgp}
Let $H$ be a normal subgroup of $G$ and assume that the following conditions are satisfied:
\begin{itemize}
\item $A$ is $H$-simple \textup{(}i.e., the only $H$-invariant ideals of $A$ are $\{0\}$ and $A$\textup{)}.
\item $A$ has a faithful $G$-invariant state.
\item $H$ is $C^*$-simple and $C_G(H)$ is trivial.
\end{itemize}
Then $A\rtimes_r H\subseteq A\rtimes_r G$ is $C^*$-irreducible.
\end{corollary}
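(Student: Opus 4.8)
The plan is to deduce this corollary from Theorem \ref{icc-subgroup} by verifying that its three hypotheses hold under the present assumptions. The second hypothesis, that $A$ has a faithful $G$-invariant state, is literally one of our standing assumptions, so nothing needs to be done there. It remains to check that $A \rtimes_r H$ is simple, that $H$ is icc, and that $C_G(H)$ is trivial; the last of these is again assumed outright.

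For the icc condition, I would argue that $C^*$-simplicity of $H$ forces $H$ to be icc. Indeed, $H$ being $C^*$-simple means that $C^*_r(H)$ is simple, hence has trivial center, and by the untwisted version of Kleppner's theorem (cf.~\cite[Theorem~2.7]{primeness}) this is equivalent to $H$ being icc. Alternatively, one may simply invoke the well-known fact that every $C^*$-simple group is icc, cf.~\cite{BKKO}.

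The substantive step is establishing that $A \rtimes_r H$ is simple, and this is where the main input lies. Since $H$ is $C^*$-simple, the ideal intersection property for reduced twisted crossed products over $C^*$-simple groups, established by Bryder and Kennedy in \cite{BK}, guarantees that every nonzero closed two-sided ideal of $A \rtimes_r H$ has nonzero intersection with $A$. Because $A$ is assumed to be $H$-simple, any such intersection is an $H$-invariant ideal of $A$, hence equal to $A$, which forces the ideal to be all of $A \rtimes_r H$. Thus $A \rtimes_r H$ is simple.

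Having verified the three hypotheses, Theorem \ref{icc-subgroup} applies and yields that $A \rtimes_r H \subseteq A \rtimes_r G$ is $C^*$-irreducible. The only delicate point I anticipate is ensuring that the Bryder--Kennedy intersection property is invoked in the correct generality, namely for a twisted action with an $A$-valued two-cocycle $\sigma$; since their framework is precisely that of reduced twisted crossed products, this should cause no difficulty.
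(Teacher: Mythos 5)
Your proposal is correct and follows essentially the same route as the paper: the paper likewise deduces simplicity of $A\rtimes_r H$ from $H$-simplicity of $A$ together with $C^*$-simplicity of $H$ via Bryder--Kennedy \cite[Corollary~4.4]{BK}, notes that every $C^*$-simple group is icc, and then applies Theorem~\ref{icc-subgroup}. Your unpacking of the Bryder--Kennedy input as an ideal intersection property, and your worry about the twisted ($A$-valued cocycle) setting, are both fine, since their framework is exactly that of reduced twisted crossed products.
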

\begin{proof} As $A$ is $H$-simple and $H$ is $C^*$-simple, we get from \cite[Corollary 4.4]{BK} that $A\rtimes_r H$ is simple. As any $C^*$-simple group is icc, we can apply Theorem \ref{icc-subgroup}. \end{proof}
As will be seen in Theorem \ref{FCGH}, if $H$ is normal in $G$, then  $H$ is $C^*$-simple and $C_G(H)$ is trivial if and only if $H\leq G$ is $C^*$-irreducible.

\section{$C^*$-irreducibility and reduced twisted group $C^*$-algebras} \label{irred-rtgc}
 
We recall that $G$ is a discrete group, $H$ is a subgroup of $G$, and $\sigma$ is a two-cocycle on $G$. 

\begin{definition} We will say that \emph{$(H \leq G, \sigma)$ is $C^*$-irreducible} 
if  $C^*_r(H,\sigma)\subseteq C^*_r(G,\sigma)$ is $C^*$-irreducible.
\end{definition}
If $\sigma'$ is a two-cocycle on $G$ which is similar to $\sigma$, then it is clear that the canonical $*$-isomorphism $\Phi\colon C_r^*(G, \sigma') \to C_r^*(G,\sigma)$ maps  $C_r^*(H, \sigma')$ onto $C_r^*(H,\sigma)$, and it therefore follows that  $(H\leq G, \sigma')$ is  $C^*$-irreducible if and only if $(H\leq G, \sigma)$ is  $C^*$-irreducible. 

When $H$ is normal in $G$,  $C^*$-irreducibility of $(H \leq G,\sigma)$ can be characterized as follows: 
\begin{theorem}\label{relative-simplicity}
Assume  $H$ is  normal in $G$. Then the following conditions are equivalent:
\begin{itemize}
	\item[(i)] $(H \leq G,\sigma)$ is $C^*$-irreducible.

 	\item[(ii)] $(H,\sigma)$ is $C^*$-simple and $(H \leq G, \sigma)$ satisfies the relative Kleppner condition.
\end{itemize}

\noindent Both these conditions are satisfied if the following holds:
\begin{itemize}
\item[(iii)] $C^*_r(H,\sigma)\subseteq C^*_r(G,\sigma)$ has the relative Dixmier property.
\end{itemize}

\noindent Moreover, if $(H,\sigma)$ has the unique trace property, then all three conditions are equivalent.
 
 \medskip Finally, if $(H \leq G,\sigma)$ is $C^*$-irreducible, then the map $\Gamma \mapsto C_r^*(\Gamma,\sigma)$ gives a
bijective correspondence between groups $\Gamma$ satisfying $H \leq \Gamma \leq G$, and $C^*$-algebras $B$ satisfying $C_r^*(H,\sigma) \subseteq B\subseteq C_r^*(G,\sigma)$. 
\end{theorem}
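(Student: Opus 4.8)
The plan is to reduce the statement to the crossed-product results of Section~\ref{irred-crpro} by realizing the inclusion of twisted group algebras as a crossed-product inclusion with base algebra $\C$. First I would take $A=\C$, equip it with the trivial action $\iota$ of $G$, and regard the given two-cocycle $\sigma$ (via the canonical inclusion $\T\hookrightarrow\C$) as a unitary-valued cocycle, so that $(A,G,\iota,\sigma)$ is a unital twisted $C^*$-dynamical system. Then $A\rtimes_r G=C^*_r(G,\sigma)$ and $A\rtimes_r H=C^*_r(H,\sigma)$, so that the inclusion $C^*_r(H,\sigma)\subseteq C^*_r(G,\sigma)$ is precisely $A\rtimes_r H\subseteq A\rtimes_r G$. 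Since $H$ is normal in $G$, Theorem~\ref{outer-quotient} applies directly.

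Under this identification, the equivalence of conditions (i) and (iii) in Theorem~\ref{outer-quotient} reads: $(H\leq G,\sigma)$ is $C^*$-irreducible if and only if $C^*_r(H,\sigma)$ is simple and $C^*_r(H,\sigma)'\cap C^*_r(G,\sigma)=\C1$. The latter condition is exactly the irreducibility of the inclusion, which by Proposition~\ref{kleppner-commutant} is equivalent to $(H\leq G,\sigma)$ satisfying the relative Kleppner condition; and $C^*_r(H,\sigma)$ being simple means that $(H,\sigma)$ is $C^*$-simple. This yields the equivalence (i)$\Leftrightarrow$(ii). The concluding bijective correspondence $\Gamma\mapsto C^*_r(\Gamma,\sigma)$ likewise follows from the final assertion of Theorem~\ref{outer-quotient}, since $A\rtimes_r\Gamma=C^*_r(\Gamma,\sigma)$ for every intermediate group $H\leq\Gamma\leq G$.

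It remains to incorporate the relative Dixmier property. The implication (iii)$\Rightarrow$(i) is general and does not use normality: since $C^*_r(G,\sigma)$ carries the faithful tracial state $\tau$, the relative Dixmier property forces $C^*$-irreducibility by \cite[Proposition~3.12]{Rordam}. For the reverse implication under the unique trace hypothesis, I would invoke the final clause of Theorem~\ref{twisted-outer} applied to the decomposition $C^*_r(G,\sigma)\simeq C^*_r(H,\sigma)\rtimes_r(G/H)$ recorded before Theorem~\ref{outer-quotient}. Under either (i) or (ii), the pair $(H,\sigma)$ is $C^*$-simple, so $C^*_r(H,\sigma)$ is a simple unital $C^*$-algebra; if moreover $(H,\sigma)$ has the unique trace property, then $C^*_r(H,\sigma)$ is simple with a unique tracial state, and hence has the Dixmier property. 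Viewing $C^*_r(H,\sigma)$ as the base algebra of the crossed product by $G/H$, the final clause of Theorem~\ref{twisted-outer} then gives that the $C^*$-irreducible inclusion $C^*_r(H,\sigma)\subseteq C^*_r(G,\sigma)$ has the relative Dixmier property, i.e.\ (iii) holds.

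The step I expect to demand the most care is verifying that a simple unital $C^*$-algebra with a unique tracial state has the Dixmier property, which is what allows the base algebra of the crossed-product decomposition to satisfy the hypothesis of the final clause of Theorem~\ref{twisted-outer}; this is precisely the point where an external input, namely the Haagerup--Zsid\'o characterization of the Dixmier property, is needed. The remaining ingredients --- the identification $C^*_r(\,\cdot\,,\sigma)=\C\rtimes_r(\,\cdot\,)$ and the rewriting of the relative commutant condition through Proposition~\ref{kleppner-commutant} --- are routine once this dictionary between group algebras and crossed products is in place.
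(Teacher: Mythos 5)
Your proposal is correct and follows essentially the same route as the paper: the equivalence (i)$\Leftrightarrow$(ii) comes from Theorem~\ref{outer-quotient} with $A=\C$ combined with Proposition~\ref{kleppner-commutant}, the implication (iii)$\Rightarrow$(i) from \cite[Proposition~3.12]{Rordam}, the final bijective correspondence from Theorem~\ref{outer-quotient}, and (ii)$\Rightarrow$(iii) from the decomposition $C^*_r(G,\sigma)\simeq C^*_r(H,\sigma)\rtimes_r(G/H)$ with outer twisted action. Your only deviation is cosmetic: where the paper cites \cite[Examples~4.3]{Popa} directly for the unique-trace step, you unpack that citation into the final clause of Theorem~\ref{twisted-outer} together with the Haagerup--Zsid\'o fact that a simple unital $C^*$-algebra with a unique tracial state has the Dixmier property, which is precisely what Popa's example encapsulates.
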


\begin{proof}
The equivalence between (i) and (ii) follows from Theorem \ref{outer-quotient} in the case where $A=\C$, in combination with Proposition~\ref{kleppner-commutant}.
 The implication (iii) $\Longrightarrow$ (i) is a consequence of \cite[Proposition~3.12]{Rordam}.
If $(H,\sigma)$ has the unique trace property, then (ii) $\Longrightarrow$ (iii) follows from \cite[Examples~4.3]{Popa}. Indeed, the observation made there also works in the twisted case, by decomposing $C_r^*(G, \sigma)$ as a twisted crossed product $C^*_r(H,\sigma) \rtimes_{(\beta,\omega), r} (G/H)$ and noticing that the twisted action $(\beta, \omega)$ of $G/H$ is then outer by Theorem \ref{outer-quotient}.
The last assertion follows from Theorem \ref{outer-quotient}
\end{proof}

\begin{corollary}\label{FC-normal}
Let $H$ be a normal subgroup of $G$, and assume $H$ is FC-hypercentral or $C^*$-simple.
Then the following conditions are equivalent:
\begin{itemize}
	\item[(i)] $(H \leq G,\sigma)$ is $C^*$-irreducible.
		
\item[(ii)]  $(H \leq G, \sigma)$ satisfies the relative Kleppner condition.
 
\item[(iii)] $C^*_r(H,\sigma)\subseteq C^*_r(G,\sigma)$ has the relative Dixmier property.
\end{itemize}
\end{corollary}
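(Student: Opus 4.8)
The plan is to reduce everything to Theorem \ref{relative-simplicity} by showing that, under the additional hypothesis on $H$, the relative Kleppner condition already forces $(H,\sigma)$ to be $C^*$-simple with the unique trace property. Once this is established, conditions (i), (ii), (iii) of the corollary become precisely condition (i) of Theorem \ref{relative-simplicity}, the relative Kleppner part of its condition (ii), and its condition (iii), and the equivalence follows from that theorem together with its unique-trace addendum.

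The key observation I would isolate first is the following: \emph{if $(H \leq G,\sigma)$ satisfies the relative Kleppner condition, then $(H,\sigma)$ is $C^*$-simple and has the unique trace property}. I would prove this by splitting into the two cases of the hypothesis. If $H$ is $C^*$-simple, then by \cite{BK} the pair $(H,\sigma)$ is automatically $C^*$-simple and has the unique trace property for every $\sigma$, so nothing beyond the hypothesis is needed here. If instead $H$ is FC-hypercentral, I would invoke the observation made immediately after Definition \ref{rel K}, namely that the relative Kleppner condition for $(H\leq G,\sigma)$ implies that $(H,\sigma)$ satisfies Kleppner's condition; since for FC-hypercentral groups Kleppner's condition is equivalent to both $C^*$-simplicity and the unique trace property of $(H,\sigma)$, as recalled in Section \ref{prel}, this yields the claim.

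With this in hand I would run the argument as a cycle of implications. The implication (iii) $\Longrightarrow$ (i) is immediate from Theorem \ref{relative-simplicity} (the relative Dixmier property implies $C^*$-irreducibility), and (i) $\Longrightarrow$ (ii) is likewise immediate, since $C^*$-irreducibility gives condition (ii) of that theorem, which contains the relative Kleppner condition. The remaining step is (ii) $\Longrightarrow$ (iii): assuming the relative Kleppner condition, the observation above gives that $(H,\sigma)$ is $C^*$-simple with the unique trace property; the $C^*$-simplicity together with the relative Kleppner condition is exactly condition (ii) of Theorem \ref{relative-simplicity}, and since $(H,\sigma)$ has the unique trace property, the final part of that theorem asserts that all three of its conditions are equivalent, so in particular the relative Dixmier property (iii) holds. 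Closing the cycle (iii) $\Longrightarrow$ (i) $\Longrightarrow$ (ii) $\Longrightarrow$ (iii) establishes the equivalence of (i), (ii), (iii).

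The only genuine content beyond this bookkeeping reduction is the upgrade of the relative Kleppner condition to $C^*$-simplicity and the unique trace property of $(H,\sigma)$, and this is where the hypothesis that $H$ is FC-hypercentral or $C^*$-simple is essential. It is the step I would treat most carefully, since for a general normal $H$ the relative Kleppner condition only guarantees that $(H,\sigma)$ satisfies Kleppner's condition, which is strictly weaker than $C^*$-simplicity; thus without this hypothesis condition (ii) of the corollary would not in general recover condition (ii) of Theorem \ref{relative-simplicity}, and the equivalence could fail.
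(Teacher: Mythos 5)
Your proposal is correct and follows essentially the same route as the paper: both reduce the corollary to Theorem~\ref{relative-simplicity} by observing that, under the hypothesis on $H$, the relative Kleppner condition upgrades to $C^*$-simplicity and the unique trace property of $(H,\sigma)$ -- via \cite[Theorem~3.1]{FCH} in the FC-hypercentral case and \cite{BK} in the $C^*$-simple case. The paper states this more tersely, but the logical content, including the use of the unique-trace addendum of the theorem to obtain the relative Dixmier property, is identical to yours.
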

\begin{proof}
As shown in \cite[Theorem 3.1]{FCH}, if $H$ is FC-hypercentral, then $(H, \sigma)$ is $C^*$-simple if and only if $(H, \sigma)$ has the unique trace property, if and only if $(H, \sigma)$ satisfies Kleppner's condition.  
On the other hand, if $H$ is $C^*$-simple, then $(H, \sigma)$ is $C^*$-simple and has the unique trace property, cf.~\cite[Corollaries 4.5 and 5.3]{BK}.  
Hence, Theorem~\ref{relative-simplicity} gives the assertion.
\end{proof}
We will obtain a related result involving the twisted centralizer $C_G^\sigma(H)$ in Corollary \ref{FC-C*-simple-sigma}.
Breuillard, Kalantar, Kennedy and Ozawa have shown in \cite[Theorem 1.4]{BKKO} that if $H$ is normal in $G$, then $G$ is $C^*$-simple if and only if both $H$ and $C_G(H)$ are $C^*$-simple. In general, if $H$ is normal in $G$ and $G$ is $C^*$-simple, then the inclusion $H\leq G$ is not necessarily $C^*$-irreducible (consider for example $G= H\times K$ with $H$ and $K$ both $C^*$-simple). In fact, we have:

\begin{theorem}\label{FCGH}
Assume $H$ is normal in $G$. Then the following conditions are equivalent:
\begin{itemize}

 \item[(i)] $H\leq G$ is $C^*$-irreducible.
 
\item[(ii)] $H$ is $C^*$-simple and $G$ is icc relatively to $H$.

\item[(iii)] $H$ is $C^*$-simple and $C_G(H)$ is trivial.

\item[(iv)] $C^*_r(H)\subseteq C^*_r(G)$ has the relative Dixmier property.

\item[(v)] $H\leq G$ is relatively $C^*$-simple.
\end{itemize}
\end{theorem}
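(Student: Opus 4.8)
The plan is to reduce the entire statement to the twisted results already established by specializing to the trivial cocycle, and then to import Ursu's characterization of relative simplicity for the last condition. I would begin with the chain (i) $\Leftrightarrow$ (ii) $\Leftrightarrow$ (iii). Applying Theorem~\ref{relative-simplicity} with $\sigma$ trivial, condition (i) becomes ``$H$ is $C^*$-simple and $(H\leq G)$ satisfies the relative Kleppner condition''. Since every element of $G$ is $\sigma$-regular when $\sigma$ is trivial, the relative Kleppner condition is exactly the requirement that every nontrivial $H$-conjugacy class in $G$ be infinite, i.e.\ that $G$ be icc relatively to $H$; this gives (i) $\Leftrightarrow$ (ii) at once. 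For (ii) $\Leftrightarrow$ (iii) I would invoke Proposition~\ref{icc-rel}: as $H$ is normal, $G$ is icc relatively to $H$ if and only if $H$ is icc and $C_G(H)$ is trivial. Because every $C^*$-simple group is icc, under the hypothesis that $H$ is $C^*$-simple the clause ``$G$ is icc relatively to $H$'' collapses to ``$C_G(H)$ is trivial'', so (ii) and (iii) coincide.

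To bring in the relative Dixmier property (iv), I would note first that (iv) $\Rightarrow$ (i) always holds, by \cite[Proposition~3.12]{Rordam}. Conversely, if (i) holds then $C^*_r(H)$, being itself an intermediate $C^*$-algebra of the inclusion $C^*_r(H)\subseteq C^*_r(G)$, must be simple; hence $H$ is $C^*$-simple and therefore has the unique trace property. This is precisely the hypothesis needed to apply Corollary~\ref{FC-normal}, which then yields (i) $\Leftrightarrow$ (iv). Thus (i)--(iv) are mutually equivalent.

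It remains to incorporate (v). The implication (v) $\Rightarrow$ (i) is immediate from \cite[Proposition~3.6]{Ursu}, since every relatively simple inclusion is $C^*$-irreducible. The converse, say (iii) $\Rightarrow$ (v), is the delicate point: it is exactly Ursu's theorem in \cite{Ursu}, asserting that for $H$ normal in $G$ the inclusion $C^*_r(H)\subseteq C^*_r(G)$ is relatively simple precisely when $H$ is $C^*$-simple and $C_G(H)$ is trivial, and I would simply cite it. The main obstacle is therefore not the internal bookkeeping among (i)--(iv), which is essentially mechanical once Theorem~\ref{relative-simplicity}, Proposition~\ref{icc-rel} and Corollary~\ref{FC-normal} are in place, but the passage to relative simplicity, which genuinely requires Ursu's analysis of unital completely positive maps on $C^*_r(G)$ that restrict to $*$-homomorphisms on $C^*_r(H)$.
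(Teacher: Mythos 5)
Your proposal is correct and follows essentially the same route as the paper: specializing Theorem~\ref{relative-simplicity} to the trivial cocycle for (i)~$\Leftrightarrow$~(ii), invoking Proposition~\ref{icc-rel} together with the fact that $C^*$-simple groups are icc for (ii)~$\Leftrightarrow$~(iii), using the unique trace property of $C^*$-simple groups (via Corollary~\ref{FC-normal}, whose proof is itself the ``Moreover'' clause of Theorem~\ref{relative-simplicity} that the paper cites directly) for (iv), and citing Ursu's characterization for (v). The only cosmetic difference is that you route the Dixmier equivalence through Corollary~\ref{FC-normal} rather than through Theorem~\ref{relative-simplicity} itself, which amounts to the same argument.
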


\begin{proof}
 Theorem~\ref{relative-simplicity} gives 
 that (i) is equivalent to (ii).
Since any $C^*$-simple group is icc, the equivalence (ii)~$\Longleftrightarrow$~(iii) follows from Proposition \ref{icc-rel}.
 Taking into account that $H$ has the unique trace property whenever $H$ is $C^*$-simple, cf.~\cite[Theorem 1.3]{BKKO}, the equivalence between (i) and (iv) follows also from Theorem~\ref{relative-simplicity}. 
Finally, the equivalence (iii)~$\Longleftrightarrow$~(v) is a consequence of \cite[Theorem~1.3 and~1.6]{Ursu}. 
\end{proof}
\begin{remark}
A subgroup $H$ of $G$ is said to be \emph{plump} in $G$ if a relative version of Powers' averaging property holds 
(see \cite{Amrutam} and \cite[Definition~1.1]{Ursu}).
In general, plumpness of $H$ in $G$ implies $C^*$-irreducibility of $H\leq G$ \cite[Theorem~5.3]{Rordam},
and is equivalent to relative $C^*$-simplicity of $H\leq G$ when $H$ is normal
\cite[Theorem~1.6]{Ursu}. It therefore follows from Theorem \ref{FCGH} that plumpness of $H$ in $G$ is equivalent to $C^*$-irreducibility of $H\leq G$ when $H$ is normal.
\end{remark}
Let $\Gamma$ be a $C^*$-simple group, and let $K= \operatorname{Aut}(\Gamma)$ denote the group of all automorphisms of $\Gamma$. It is shown in \cite[Corollary 3.7]{Bedos91} that $K$ is $C^*$-simple too. As $\Gamma$ has trivial center, we may identify $\Gamma$ with the normal subgroup of $K$ consisting of all inner automorphisms of $\Gamma$, and $C_K(\Gamma)$ is then trivial. Theorem \ref{FCGH} therefore gives:
\begin{corollary} 
Let $\Gamma$ be a $C^*$-simple group. Then $\Gamma \leq \operatorname{Aut}(\Gamma)$ is $C^*$-irreducible.
\end{corollary}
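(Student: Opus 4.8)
The plan is to apply Theorem~\ref{FCGH} directly, since $\Gamma$ is $C^*$-simple (hence has trivial center) and we can realize $\Gamma$ as a normal subgroup of $K = \operatorname{Aut}(\Gamma)$ with trivial centralizer. First I would recall that a $C^*$-simple group is icc, and in particular has trivial center, so the conjugation map $\iota\colon \Gamma \to \operatorname{Inn}(\Gamma)\subseteq K$ sending $\gamma$ to the inner automorphism $\operatorname{Ad}(\gamma)$ is injective. This identifies $\Gamma$ with the subgroup $\operatorname{Inn}(\Gamma)$ of $K$, which is normal in $\operatorname{Aut}(\Gamma)$ by the standard fact that conjugating an inner automorphism by any automorphism yields another inner automorphism. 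So the hypothesis of Theorem~\ref{FCGH} that $H$ be normal in $G$ is satisfied with $H=\Gamma$ and $G=K$.

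Next I would verify the two conditions appearing in part (iii) of Theorem~\ref{FCGH}: that $H=\Gamma$ is $C^*$-simple, and that $C_K(\Gamma)$ is trivial. The first holds by assumption. For the second, the key computation is that an automorphism $\phi\in K$ centralizes $\operatorname{Inn}(\Gamma)$ precisely when $\phi\circ\operatorname{Ad}(\gamma) = \operatorname{Ad}(\gamma)\circ\phi$ for all $\gamma\in\Gamma$; unwinding this gives $\operatorname{Ad}(\phi(\gamma)) = \operatorname{Ad}(\gamma)$ for all $\gamma$, hence $\phi(\gamma)\gamma^{-1}$ lies in the center of $\Gamma$ for every $\gamma$. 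Since $\Gamma$ has trivial center, this forces $\phi(\gamma)=\gamma$ for all $\gamma$, i.e.\ $\phi=\operatorname{id}$. Thus $C_K(\Gamma)=\{\operatorname{id}\}$ is trivial.

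Finally, with both conditions of part (iii) of Theorem~\ref{FCGH} established, that theorem yields the equivalence with part (i), namely that $\Gamma\leq\operatorname{Aut}(\Gamma)$ is $C^*$-irreducible, which is the desired conclusion. I do not anticipate any genuine obstacle here, as all the structural inputs ($K$ being $C^*$-simple, the triviality of $C_K(\Gamma)$) are already recorded in the surrounding discussion or follow from the trivial-center property; the only point requiring a short argument is the centralizer computation, and that is the routine unwinding sketched above. The entire proof is essentially a verification that the hypotheses of Theorem~\ref{FCGH} hold in this particular configuration.
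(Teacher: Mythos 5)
Your proof is correct and takes essentially the same route as the paper: realize $\Gamma$ as the normal subgroup $\operatorname{Inn}(\Gamma)$ of $\operatorname{Aut}(\Gamma)$ (using that $C^*$-simplicity implies trivial center), observe that the centralizer $C_{\operatorname{Aut}(\Gamma)}(\Gamma)$ is trivial, and invoke the equivalence (i)$\Leftrightarrow$(iii) of Theorem~\ref{FCGH}. The only difference is that you spell out the centralizer computation, which the paper states without proof.
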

Similarly, if  $\Gamma$ is $C^*$-simple and  we let it act on itself by inner automorphisms, then we have that $\Gamma \rtimes \Gamma \simeq \Gamma \times \Gamma $ is $C^*$-simple, so we get that  $\Gamma \times \Gamma \leq \Gamma\rtimes \operatorname{Aut}(\Gamma)$ is $C^*$-irreducible (cf.~the proof of \cite[Corollary 3.8]{Bedos91}).

\begin{corollary} \label{twistedC*}
Assume $H$ is  normal in $G$ and $H \leq G$ is $C^*$-irreducible. Then  $(H\leq G, \sigma)$ is $C^*$-irreducible.
\end{corollary}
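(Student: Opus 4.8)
The plan is to deduce the twisted statement directly from the two characterizations already established: Theorem \ref{FCGH} for the untwisted inclusion $H \leq G$ and Theorem \ref{relative-simplicity} for the twisted inclusion $(H \leq G, \sigma)$. The only input beyond unwinding definitions will be the passage from $C^*$-simplicity of $H$ to $C^*$-simplicity of $(H, \sigma)$.

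First I would apply Theorem \ref{FCGH} to the hypothesis. Since $H$ is normal in $G$ and $H \leq G$ is $C^*$-irreducible, condition (iii) there tells us that $H$ is $C^*$-simple and that $C_G(H)$ is trivial; equivalently, by Proposition \ref{icc-rel}, $G$ is icc relatively to $H$, which by the discussion in Section \ref{rKc} means exactly that $FC_G(H) = \{e\}$.

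Next I would verify the two parts of condition (ii) in Theorem \ref{relative-simplicity}. For the first, since $H$ is $C^*$-simple, \cite[Corollary 4.5]{BK} gives that $(H, \sigma)$ is $C^*$-simple for every two-cocycle $\sigma$ on $H$; this is precisely the input already used in the proof of Corollary \ref{FC-normal}. For the second, recall that $(H \leq G, \sigma)$ satisfies the relative Kleppner condition if and only if $FC_G^\sigma(H)$ is trivial, and that $FC_G^\sigma(H) \subseteq FC_G(H)$ straight from the definition. Since $FC_G(H) = \{e\}$, we conclude that $FC_G^\sigma(H) = \{e\}$, so the relative Kleppner condition holds.

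With both parts of Theorem \ref{relative-simplicity}(ii) established, the implication (ii) $\Rightarrow$ (i) yields that $(H \leq G, \sigma)$ is $C^*$-irreducible. I do not anticipate any genuine obstacle: the argument is a short deduction, and the only step that is not purely formal is the appeal to \cite{BK} for the twisted $C^*$-simplicity of $H$. The essential and almost trivial observation driving the whole proof is that twisting can only shrink the relevant FC-centralizer, so that the icc-relative condition coming from untwisted $C^*$-irreducibility already forces the relative Kleppner condition for every $\sigma$.
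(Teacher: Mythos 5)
Your proof is correct, and it reaches the conclusion by a somewhat different route than the paper. Both arguments begin identically: Theorem \ref{FCGH} converts the hypothesis into the statement that $H$ is $C^*$-simple and $C_G(H)$ is trivial (equivalently, $G$ is icc relatively to $H$). From there the paper applies Corollary \ref{C*s-sbgp} with $A=\C$, which bundles together the simplicity of $C_r^*(H,\sigma)=\C\rtimes_r H$ (via \cite[Corollary 4.4]{BK}) and the passage from relative icc plus trivial centralizer to $C^*$-irreducibility (via Theorem \ref{icc-subgroup}). You instead verify condition (ii) of Theorem \ref{relative-simplicity} directly: $(H,\sigma)$ is $C^*$-simple by \cite[Corollary 4.5]{BK}, and the relative Kleppner condition holds because $FC_G^\sigma(H)\subseteq FC_G(H)=\{e\}$ straight from the definition of $FC_G^\sigma(H)$ --- a point the paper itself records just after Definition \ref{rel K}, namely that the relative Kleppner condition is automatic whenever $G$ is icc relatively to $H$. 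The two routes rest on the same foundations: both Theorem \ref{relative-simplicity} and Theorem \ref{icc-subgroup} are ultimately consequences of the crossed-product decomposition in Theorem \ref{outer-quotient}, and the two citations to \cite{BK} are essentially the same simplicity theorem for twisted constructions over $C^*$-simple groups. Your version has the merit of making explicit the elementary combinatorial observation driving everything --- twisting can only shrink the FC-centralizer, so untwisted relative icc forces the relative Kleppner condition for every cocycle --- and of invoking only the paper's main characterization theorem; the paper's version is shorter on the page because Corollary \ref{C*s-sbgp} absorbs the bookkeeping.
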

\begin{proof} By Theorem \ref{FCGH}, $H$ is $C^*$-simple and $C_G(H)$ is trivial. Using Corollary \ref{C*s-sbgp} with $A= \mathbb{C}$, we get that $(H\leq G, \sigma)$ is $C^*$-irreducible.
\end{proof}

\begin{remark}\label{counterexample}
When $H$ is not normal in $G$, the equivalence between (i) and (ii) in Theorem \ref{relative-simplicity} does not hold in general, even if $\sigma$ is trivial. Here is an example where $H$ and $G$ are both $C^*$-simple, $G$ is icc relatively to $H$, but $H\leq G$ is not $C^*$-irreducible. Consider
\[
H=\left\langle
\begin{bmatrix}
	1 & 2 \\ 0 & 1
\end{bmatrix},
\begin{bmatrix}
	1 & 0 \\ 2 & 1
\end{bmatrix}
\right\rangle
\leq
\operatorname{SL}(2,\Z).
\]
It is well-known that $H\simeq\F_2$ ($H$ is sometimes called the Sanov subgroup of $\operatorname{SL}(2,\Z)$).

Furthermore, set
\[
K=\Z^2\rtimes H \quad \text{ and } \quad G=K*\Z,
\]
where $H$ acts on $\Z^2$ in the natural way. Then $H\leq K\leq G$, both $H$ and $G$ are $C^*$-simple, but $K$ is not $C^*$-simple since it contains a nontrivial normal abelian subgroup.

We now check that $G$ is icc relatively to $H$.
Let $x=(x_1,x_2)\in\Z^2$ and let $u$ and $v$ be the generators of $H$ inside $K$.
Then
\[
u^nxu^{-n}=(x_1+2nx_2,x_2) \quad \text{ and } \quad v^nxv^{-n}=(x_1,x_2+2nx_1),
\]
and it follows that $x^H=\{yxy^{-1} : y\in H\}$ is infinite for all $x\in\Z^2\setminus\{0\}$.
Next, consider $xy\in K$ where $y\in H\setminus\{e\}$.
We can always pick $w$ to be one of $u,u^{-1},v,v^{-1}$ such that $w^nyw^{-n}$ has no cancellation for $n\geq 1$.
Then
\[
\lvert (xy)^H \rvert \geq \lvert \{ w^nxyw^{-n} : n\geq 1 \} \rvert = \lvert \{ w^nxw^{-n}\cdot w^nyw^{-n} : n\geq 1 \} \rvert = \infty.
\]
One checks in a similar way that any word in $K$ and $\Z$, with at least one letter from $\Z$, has an infinite conjugacy class w.r.t.~$H$.
Thus, $g^H$ is infinite for all $g\in G\setminus\{e\}$.
\end{remark}

\begin{corollary} \label{prime-irred} Let $H$ be a normal subgroup of $G$ and assume that $H$ is prime. Then $(H\leq G,\sigma)$ is $C^*$-irreducible if and only if  $(H,\sigma)$ is $C^*$-simple and $C_G^\sigma(H)$ is trivial.
\end{corollary}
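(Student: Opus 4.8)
The plan is to obtain this corollary as a formal consequence of the two characterizations already established, namely Theorem~\ref{relative-simplicity} and Theorem~\ref{sigma-centralizer2}. Since $H$ is assumed normal in $G$, Theorem~\ref{relative-simplicity} tells us that $(H\leq G,\sigma)$ is $C^*$-irreducible if and only if $(H,\sigma)$ is $C^*$-simple and $(H\leq G,\sigma)$ satisfies the relative Kleppner condition. The task thus reduces to rewriting the relative Kleppner condition under the extra standing hypothesis that $H$ is prime.

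For that step I would invoke Theorem~\ref{sigma-centralizer2}: because $H$ is normal and prime, $(H\leq G,\sigma)$ satisfies the relative Kleppner condition if and only if $(H,\sigma)$ satisfies Kleppner's condition and $C_G^\sigma(H)$ is trivial. The single observation that makes everything line up is recorded in Section~\ref{prel}: whenever $(H,\sigma)$ is $C^*$-simple, it automatically satisfies Kleppner's condition. Consequently, once $C^*$-simplicity of $(H,\sigma)$ is in force, the clause ``$(H,\sigma)$ satisfies Kleppner's condition'' appearing in Theorem~\ref{sigma-centralizer2} is redundant, and the relative Kleppner condition becomes equivalent to the triviality of $C_G^\sigma(H)$ alone.

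Putting the two directions together is then immediate. For the forward implication, $C^*$-irreducibility gives, via Theorem~\ref{relative-simplicity}, that $(H,\sigma)$ is $C^*$-simple and that the relative Kleppner condition holds; Theorem~\ref{sigma-centralizer2} then forces $C_G^\sigma(H)$ to be trivial. For the converse, assuming $(H,\sigma)$ is $C^*$-simple and $C_G^\sigma(H)$ is trivial, the $C^*$-simplicity supplies Kleppner's condition for $(H,\sigma)$, so Theorem~\ref{sigma-centralizer2} yields the relative Kleppner condition, and Theorem~\ref{relative-simplicity} then delivers the $C^*$-irreducibility of $(H\leq G,\sigma)$.

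I do not expect any genuine obstacle here; the corollary is essentially a bookkeeping combination of the two theorems, and the only nontrivial input, that $C^*$-simplicity of $(H,\sigma)$ entails Kleppner's condition, has already been secured earlier in the paper.
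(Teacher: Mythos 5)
Your proposal is correct and follows exactly the paper's own route: the paper proves Corollary~\ref{prime-irred} precisely by combining Theorem~\ref{relative-simplicity} with Theorem~\ref{sigma-centralizer2}. Your explicit remark that $C^*$-simplicity of $(H,\sigma)$ already entails Kleppner's condition is the (implicit) glue that the paper's one-line proof relies on, so your write-up is simply a more detailed version of the same argument.
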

\begin{proof}
The assertion follows by combining Theorem \ref{relative-simplicity} with Theorem \ref{sigma-centralizer2}. 
\end{proof}

\begin{corollary}\label{FC-C*-simple-sigma} 
 Let $H$ be a normal subgroup of $G$. 
 
If $H$ is prime and $FC$-hypercentral,
then $(H\leq G,\sigma)$ is $C^*$-irreducible if and only if $(H,\sigma)$ satisfies Kleppner's condition  and $C_G^\sigma(H)$ is trivial.

If $H$ is $C^*$-simple, then $(H\leq G,\sigma)$ is $C^*$-irreducible if and only if $C_G^\sigma(H)$ is trivial.

\end{corollary}
\begin{proof}
We note that if $H$ is $C^*$-simple, then $H$ is icc, so $H$ is  prime and $(H,\sigma)$ satisfies Kleppner's condition. Both assertions therefore follows by combining Corollary \ref{FC-normal} with Theorem \ref{sigma-centralizer2}.
\end{proof}
\section{Some examples}  \label{irred-examples}

\begin{example}[Noncommutative tori I] \label{nct-I}
Let $\theta \in \R$ and let $A_\theta$ be the associated noncommutative $2$-torus, i.e., the universal unital $C^*$-algebra generated by two unitaries $U_1, U_2$ satisfying the relation
	\[U_1 U_2 = e^{i2\pi \theta}U_2U_1 \]
	As is well-known, we may assume that $A_\theta= C_r^*(\Z^2, \sigma_\theta)$, where 
	$\sigma_\theta$ is the two-cocycle on $\Z^2$ given by 
	\[\sigma_\theta(\mathbf{x}, \mathbf{y}) = e^{i\pi\,\theta (x_1y_2-x_2y_1)}\]
	for all $\mathbf{x}=(x_1, x_2), \mathbf{y}=(y_1, y_2) \in \Z^2$. Moreover, $A_\theta$ is simple if and only if $(\Z^2, \sigma_\theta)$ satisfies Kleppner's condition, which happens if and only if $\theta$ is irrational.
	
	Now, let $p, q \in \N$ and set $H_{p,q} := p\Z\times q\Z \leq \Z^2$, $B_{p, q, \theta} := C_r^*(H_{p, q}, \sigma_\theta)$. One readily checks that $B_{p, q, \theta} \simeq A_{pq\theta}$. Further, we have that
	the inclusion $B_{p, q, \theta} \subseteq A_\theta$ is $C^*$-irreducible if (and only if) $\theta$ is irrational. 
	
	Indeed, let $\theta$ be irrational.  Since $H_{p,q}$ is abelian, it suffices to show that $(H_{p,q} \leq \Z^2, \sigma_\theta)$ satisfies the relative Kleppner condition, cf.~Corollary \ref{FC-normal}. Let $\mathbf{x} = (x_1, x_2) \in \Z^2$. Then a short computation gives that  $\mathbf{x}$ is $\sigma_\theta$-regular w.r.t.~$H_{p,q}$ if and only if \[ \theta \,(qx_1m_2-pm_1x_2) \in \Z\] for all 
	$  \mathbf{y}= (pm_1, qm_2) \in H_{p,q}$, which is clearly possible if and only if $\mathbf{x}=(0,0)$. Thus $(0,0)$ is the only element of $\Z^2$ which is $\sigma_\theta$-regular w.r.t.~$H_{p,q}$, and the desired conclusion follows. 
	Using the final assertion of Theorem \ref{relative-simplicity}, we get that there is a one-to-one correspondence between intermediate $C^*$-algebras of $B_{p, q, \theta} \subseteq A_\theta$ and subgroups of $\Z_p \times \Z_q$. In particular, there are no strict intermediate $C^*$-algebras in this inclusion if for example $p=1$ and $q$ is prime.
\end{example}
\begin{example}[Noncommutative tori II]
	Let $\mathbf{\Theta}= (\theta_1, \theta_2, \theta_3) \in \R^3$ and let $A=A_{\mathbf{\Theta}}$ be the associated noncommutative $3$-torus, i.e., the universal unital $C^*$-algebra generated by three unitaries $U_1, U_2, U_3$ satisfying the relations
	\[U_1 U_2 = e^{i2\pi \theta_3} U_2 U_1, \,\, U_2 U_3 = e^{i2\pi \theta_1} U_3 U_2, \,\, U_3 U_1 = e^{i2\pi \theta_2} U_1 U_3\,.\]
	Then $A$ is simple if and only if the dimension 
	$d(\mathbf{\Theta})$ of $\mathbb{Q} + \theta_1\mathbb{Q}+ \theta_2\mathbb{Q} + \theta_3\mathbb{Q}$ (as a vector space over $\mathbb{Q}$) is $3$ or $4$ (see e.g.~\cite{BedML}). 
	
	Now, let  $B$ be the $C^*$-subalgebra of $A$ generated by $U_1$ and $U_2$. We may then deduce from Theorem \ref{relative-simplicity} that the following statements are equivalent:
	\begin{itemize}
		\item[i)] $B \subseteq A$ is $C^*$-irreducible,
	
	\smallskip 	\item[ii)] $\theta_3 \not \in \mathbb{Q}$ and  $d(\mathbf{\Theta}) \in \{ 3, 4\}$.
	\end{itemize}
	
\smallskip \noindent 	Indeed, we may assume that $A= C_r^*(\Z^3, \sigma_ \mathbf{\Theta})$, where 
	$\sigma_ \mathbf{\Theta}$ is the two-cocycle on $\Z^3$ given by 
	\[\sigma_ \mathbf{\Theta}(\mathbf{x}, \mathbf{y}) = e^{i\pi\, \mathbf{\Theta}\cdot(\mathbf{x} \times \mathbf{y})}\]
	for all $\mathbf{x}, \mathbf{y} \in \Z^3$,
	$ \mathbf{\Theta}\cdot(\mathbf{x} \times \mathbf{y})$
	denoting the scalar triple product.
	
	Letting $ H$ be the subgroup of $\Z^3$ given by $H= \Z^2\times \{0\}= \{ (y_1, y_2, 0) : y_1, y_2 \in \Z\}$, 
	we have that $B = C_r^*(H, \sigma_\mathbf{\Theta})$ is isomorphic to the noncommutative $2$-torus $A_{\theta_3}$ associated to $\theta_3$. Thus $B$ is simple if and only if it $\theta_3 \not\in\mathbb{Q}$.   
	
	Since $\Z^3$ is abelian, $(H\leq \Z^3,  \sigma_\mathbf{\Theta})$ satisfies the relative Kleppner's condition if and only if 
	$(0,0,0)$ is the only element of $\Z^3$ which is $\sigma_ \mathbf{\Theta}$-regular w.r.t.~$H$. 
	Now, $\mathbf{x}=(x_1, x_2, x_3) \in \Z^3$ is $\sigma_ \mathbf{\Theta}$-regular w.r.t.~$H$ if and only if 
	\[ \sigma_ \mathbf{\Theta}(\mathbf{x}, \mathbf{y})= \sigma_ \mathbf{\Theta}(\mathbf{y}, \mathbf{x})\]
	for all $\mathbf{y} \in H$. This is equivalent to
	$e^{i 2\pi \,\mathbf{\Theta}\cdot (\mathbf{x}\times \mathbf{y})} = 1$
	for all $\mathbf{y} \in H$, i.e., 
	$\mathbf{\Theta}\cdot (\mathbf{x} \times \mathbf{y}) \in \Z$ for all $\mathbf{y} \in H$. 
	As $\mathbf{\Theta}\cdot (\mathbf{x} \times \mathbf{y}) = \mathbf{y}\cdot (\mathbf{\Theta}\times \mathbf{x})$, it follows readily that 
	$(H\leq\Z^3, \sigma_\mathbf{\Theta})$ satisfies the relative Kleppner's condition 
	if and only if  the only element $\mathbf{x}$ of $\Z^3$ satisfying
	\begin{equation} \label{integers} \theta_2 x_3 - \theta_3 x_2 \in \Z \, \text{ and }\, \theta_1 x_3 - \theta_3 x_1 \in \Z\,
	\end{equation}
	is $\mathbf{x}=(0,0,0)$. 
	
	Assume now that $(H\leq \Z^3, \sigma_\mathbf{\Theta})$ does not satisfy the relative Kleppner's condition. Using what we just have shown, we can find  $\mathbf{x}=(x_1, x_2, x_3) \neq (0,0,0)$ such that \eqref{integers} holds. If $x_3 \neq 0$, then we get that $\theta_1, \theta_2 \in \operatorname{Span}_{\mathbb{Q}}\{1, \theta_3\}$, hence that $d(\mathbf{\Theta}) \leq 2$. On the other hand, if $x_3=0$, then $x_1\neq 0$ or $x_2\neq 0$, and we get that $\theta_3 \in \mathbb{Q}$. This shows that Condition ii) does not hold. 
	
	Combining this with Theorem \ref{relative-simplicity}, we get the following chain of implications:
	\begin{align*} \quad \quad \text{Condition ii) holds } &\Rightarrow \, \, (H\leq \Z^3, \sigma_\mathbf{\Theta}) \, \text{ satisfies the relative Kleppner's condition}\\
		&\Rightarrow \, \, B \subseteq A \text{ is $C^*$-irreducible}\\
		&\Rightarrow \, \, B \text{ and } A \, \text{ are simple }\\
		&\Rightarrow \, \,  \text{Condition ii) holds},
	\end{align*}
	which proves the asserted equivalence. 	
	
	When Condition ii) holds, then the last assertion in Theorem \ref{relative-simplicity} gives that the intermediate $C^*$-algebras of the inclusion $B\subseteq A$ are the noncommutative tori of the form $C^*(U_1, U_2, U_3^{\, n})$ for 
	$n \in \N \cup \{0\}$.
	
	In the case where $A$ is an higher-dimensional noncommutative $n$-torus ($n\geq 4$), and  $B$ is the $C^*$-subalgebra of $A$ generated by some of the unitary generators of $A$,  it is not easy to describe explicitly when the inclusion $B\subseteq A$ will be $C^*$-irreducible. We note that in such a situation, the necessary assumption that both $A$ and $B$ are simple is not sufficient 
 in general. Indeed, if $\theta$ and $\theta'$ are irrational numbers, and we set $B = A_\theta$, $A= A_\theta\otimes A_{\theta'}$, then $A$ and $B$ are both simple, but $B\subseteq A$ is not $C^*$-irreducible.
	
\end{example}

\begin{example}[The Heisenberg group]\label{Heisenberg}
Let $G$ denote the discrete Heisenberg group, i.e., $G = \Z^3$ as a set, with multiplication given by \[(a_1,a_2,a_3)(b_1,b_2,b_3)=(a_1 +b_1,a_2 +b_2,a_3 +b_3 +a_1b_2).\]
Let $\sigma$ be a two-cocycle on $G$. As shown in \cite[Proposition 1.1]{Pac}, $\sigma$ is (up to similarity) determined by two parameters 
$\gamma, \theta \in [0,1)$ such that
\[ \sigma\big((a_1,a_2,a_3),(b_1,b_2,b_3)\big)=
e^{i2\pi \gamma(b_3a_1+\frac{1}{2}b_2a_1(a_1-1))}e^{i2\pi \theta(a_2(b_3+a_1b_2)+\frac{1}{2}a_1b_2(b_2-1))}
\] 
The restriction of $\sigma$ to $H := \{ (0, a_2, a_3) : a_2, a_3 \in \Z\} \simeq \Z^2$ depends only on $\theta$.
It is straightforward to check that the $H$-conjugacy class $(a_1,a_2,a_3)^H$ is finite if and only if $a_1= 0$, i.e., $(a_1,a_2,a_3)\in H$.
It follows readily that $(H\leq G,\sigma)$ satisfies the relative Kleppner condition  if and only if $\theta$ is irrational. Since $H$ is abelian,
 Corollary \ref{FC-normal} gives that $(H\leq G,\sigma)$ is $C^*$-irreducible
  if and only if $\theta$ is irrational. In this case, the intermediate algebras of $C_r^*(H, \sigma)\subseteq C_r^*(G, \sigma)$ are then classified by  $\N \cup \{0\}$ via the map $n\mapsto C_r(\Gamma_n, \sigma)$,  where $\Gamma_n:= \{ (na_1,a_2, a_3) : a_1, a_2, a_3 \in \Z\} \leq G$.

Alternatively, we may consider $S^\sigma(H) := \{h \in H : h \text{ is $\sigma$-regular w.r.t.~$H$}\}$. Since $H$ is abelian, we have that $(H, \sigma)$ satisfies Kleppner's condition if and only if $S^\sigma(H)$ is trivial. On the other hand, one readily checks that $C_G(H) = H$, so we get that $C_G^\sigma(H) = S^\sigma(H)$. As $H$ is prime and FC-hypercentral, Corollary \ref{FC-C*-simple-sigma} gives that $(H\leq G,\sigma)$ is $C^*$-irreducible if and only if $S^\sigma(H)$ is trivial. Now, a short computation gives that 
\[
S^\sigma(H) = \{ (0, a_2, a_3) \in H : e^{i2\pi \theta(a_2b_3-a_3b_2)} =1 \text{ for all $b_2, b_3\in \Z$}\},
\]
which is trivial if and only if $\theta$ is irrational, in accordance with what we found above.

Note that $C_r^*(H, \sigma)= C^*(\lambda_\sigma(0,1,0), \lambda_\sigma(0,0,1))\simeq A_\theta$. If $\gamma=0$, then $C_r^*(G, \sigma) \simeq A_\theta \rtimes_r \Z \simeq A_\theta \rtimes \Z$ for the action of $\Z$ on $A_\theta$ implemented by the $*$-automorphism 
  of $A_\theta$ given by conjugation with $\lambda_\sigma(1,0,0)$. 
In the natural action of $\operatorname{SL}(2, \Z)$ on $A_\theta$, this automorphism corresponds to the matrix $\left[\begin{array}{cc} 1 & 0\\  1& 1\end{array}\right]$. 
 Actions of $\Z$ on $A_\theta$ associated to other infinite cyclic subgroups of $\operatorname{SL}(2, \Z)$ are studied  in \cite[Section~5]{ER}.
\end{example}

\begin{example}[The braid group on infinitely many strands]
The braid group $B_\infty$ on infinitely many strands is the group generated by $\{s_i\}_{i=1}^\infty$ subject to relations
\[
\begin{gathered}
	s_is_{i+1}s_i=s_{i+1}s_is_{i+1} \quad \text{ for all } i\geq 1, \\
	s_is_j=s_js_i \quad \text{ when } \lvert i-j \rvert \geq 2.
\end{gathered}
\]
There is a surjection from $B_\infty$ onto the infinite symmetric group $S_\infty$, sending the generator $s_i$ to the permutation $(i,i+1)$, and thus $s_i^2$ to the identity for every $i$.
The pure braid group $P_\infty$ is defined as the kernel of the map $B_\infty \to S_\infty$.

It follows from \cite[Proof of Theorem~6.2]{braids} that $P_\infty$ is $C^*$-simple and $B_\infty$ is icc relatively to $P_\infty$.
Hence, $P_\infty\leq B_\infty$ is $C^*$-irreducible by Theorem~\ref{FCGH},
and the intermediate algebras $C^*_r(P_\infty)\subseteq B\subseteq C^*_r(B_\infty)$ are classified by the subgroups of $S_\infty$.
\end{example}

\begin{example}[Wreath products] Let $N$ and $K$ be nontrivial groups. Recall that the restricted wreath product  $N \wr K$ is defined as the semidirect product $(\bigoplus_K N) \rtimes K$, where $K$ acts by (left) translation on the index set $K$. The unrestricted wreath product is defined in a similar way by $N\, \bar{\wr} \,K= (\prod_K N) \rtimes K$.
Then the following conditions are equivalent:
\begin{itemize}
\item[(i)] $N$ is $C^*$-simple.
\item[(ii)] $N\wr K$ is $C^*$-simple.
\item[(iii)] $\bigoplus_K N \, \leq\,  N\wr K$ is $C^*$-irreducible.
\item[(iv)]  $\bigoplus_K N\, \leq \, N\, \bar{\wr} \,K$ is $C^*$-irreducible.
\end{itemize}
Indeed, the implications  (iv) $\Rightarrow $ (iii) $\Rightarrow $ (ii) are trivial and since $N$ is subnormal  in $N\wr K$ we get (ii)~$\Rightarrow$~(i).
Now, assume that (i) holds. 
Then \cite[Theorem 5.6]{Ursu}  implies that $\bigoplus_K N$ is relatively $C^*$-simple in $N\, \bar{\wr}\, K$, so Theorem \ref{FCGH} gives that (iv) holds.

This example is also discussed in \cite[Proposition~5.5]{SUT}, where (i)~$\Rightarrow$~(ii) is shown.
\end{example}

\begin{example} \label{F2Z2}
Let $G= \mathbb{F}_2 \times \Z_2$ with $\Z_2=\{ 0, 1\}$, and let $a, b$ denote the two generators of the free group $\mathbb{F}_2$. Any element of $\mathbb{F}_2$ may be written in an unique way in reduced form as
\[ x= a^{m_1} b^{n_1} \cdots a^{m_k} b^{n_k} \]
for some $k \geq 1$, where $m_1, n_k \in \Z$, and $m_2, \ldots , m_k, n_1, \ldots, n_{k-1} \in \Z\setminus\{0\}$ if $k\geq 2$. Define then 
\[\theta_1(x):= \sum_{j=1}^k m_j, \quad \theta_2(x):= \sum_{j=1}^k n_j \quad \text{and} \quad  \theta_3(x):= \sum_{j=1}^k (m_j + n_j).\]  
For  $j=1, 2, 3$ we may then define a two-cocycle $\sigma_j$ on $G$ by  
\[ \sigma_j\big((x, k), (y, l)\big)  = \begin{cases} -1 \quad \text{if } k =1  \text{ and } \, \theta_j(y)  \, \text{ is odd,}\\
\, \, \,1 \quad \quad \text{ otherwise} \end{cases}\]
for $(x, k), (y, k) \in \mathbb{F}_2 \times \Z_2$. (One may show that every two-cocycle on $G$ which is not similar to $1$ is similar to $\sigma_j$ for some $j\in \{1, 2, 3\}$, but we do not need this.)

Set $H= \mathbb{F}_2 \times \{0\}$. Then $H$ is  $C^*$-simple, and $C_G(H) = \{e\} \times \Z_2$. For  $y\in \mathbb{F}_2$ and $j \in \{1, 2, 3\}$, we have
\[  \sigma_j\big((e, 1), (y, 0)\big) = \begin{cases} -1 \quad \text{if } \theta_j(y)  \, \text{ is odd,}\\
\, \, \,1 \quad \quad \text{ otherwise} \end{cases}, \, \text{ while } \, \sigma_j\big((y, 0), (e, 1)\big)=1. \]
Thus $(e,1)$ is not $\sigma_j$-regular w.r.t.~$H$. It follows that $C_G^{\sigma_j}(H) = \{ (e, 0)\}$  for $j= 1, 2, 3$.
Hence, Corollary \ref{FC-C*-simple-sigma} gives that $(H\leq  G, \sigma_j)$ is $C^*$-irreducible for $j=1, 2, 3$. 
\end{example}

\section{$C^*$-irreducibility and groups acting on trees}  \label{irred-trees}

Let $T$ be a tree and $\partial T$ its boundary \cite{serre}. 
An automorphism $g\in \operatorname{Aut}(T)$ is \emph{elliptic} if it fixes a vertex of $T$,
is an \emph{inversion} if it does not fix any vertices, but permutes two adjacent vertices, i.e., inverts an edge,
and is called \emph{hyperbolic} if it is not elliptic nor an inversion.
The fixed point set $T^g$ of an elliptic automorphism $g$ of $T$ is a (possibly finite) subtree of $T$.
An hyperbolic automorphism $h$ does not fix any vertices,
but has an axis $L^h$, which is an infinite linear subtree on which $h$ acts by translation. 
Moreover, $h$ admits exactly two fixed points in $T\cup\partial T$, namely the two points in $\partial T$ arising from the $h$-invariant axis $L^h$.
Two hyperbolic automorphisms are said to be \emph{transverse} if they have disjoint fixed point sets.
We refer to \cite[Proposition~6.4.24]{serre} for details.

An action of a discrete group $G$ on a tree $T$ is \emph{minimal} if $T$ does not contain any proper $G$-invariant subtrees,
and of \emph{general type} (or \emph{strongly hyperbolic} \cite{HP}) if its image in  $\operatorname{Aut}(T)$ contains two transverse hyperbolic automorphisms of $T$. 
The following result can be found in \cite[Proposition 3]{LB};
see \cite[Lemma~2.10]{GGT} for a related result.

\begin{lemma}\label{normal-on-tree}
Let $G$ act faithfully on a tree $T$ and assume that the action is minimal and of general type.
Let $H$ be a nontrivial normal subgroup of $G$.
Then the action of $H$ on $T$ is also minimal and of general type.
\end{lemma}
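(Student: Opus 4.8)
The plan is to invoke the standard classification of group actions on a tree and to exploit normality of $H$ systematically: every non--general--type action carries a \emph{canonical} invariant object (a fixed subtree, a unique fixed end, or a unique stabilized pair of ends), and normality forces this object to be $G$-invariant as well, contradicting the general type of $G$. Recall (see \cite{serre} and the standard references on the Tits classification) that an action on a tree falls into exactly one of the following types: bounded (a point is fixed), horocyclic (a unique end is fixed, no hyperbolic element), lineal (a unique pair of ends is stabilized), focal (a unique end is fixed, but hyperbolic elements exist), and general type. Since general type is precisely the case of two transverse hyperbolic automorphisms, it suffices to show that the $H$-action is of general type and minimal. Throughout I use the two elementary consequences of $G$ being of general type: $G$ fixes no end of $T$, and $G$ stabilizes no pair of ends --- both follow at once from the fact that the two transverse hyperbolic elements of $G$ have four distinct endpoints in $\partial T$.

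First I would rule out that $H$ is bounded. If it were, its fixed-point set $\operatorname{Fix}(H)$ would be a nonempty subtree, and since $g\,\operatorname{Fix}(H)=\operatorname{Fix}(gHg^{-1})=\operatorname{Fix}(H)$ for every $g\in G$ by normality, $\operatorname{Fix}(H)$ would be a nonempty $G$-invariant subtree; minimality of the $G$-action then gives $\operatorname{Fix}(H)=T$, so $H$ acts trivially, contradicting faithfulness and $H\neq\{e\}$. Next, if $H$ were horocyclic or focal, it would fix a \emph{unique} end $\omega\in\partial T$; by normality the singleton $\{\omega\}$ is $G$-invariant, so $G$ fixes $\omega$, contradicting that $G$ is of general type. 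Finally, if $H$ were lineal, it would stabilize a unique pair of ends $\{\omega_1,\omega_2\}$; normality again makes this pair $G$-invariant, contradicting that $G$ stabilizes no pair of ends. Having excluded the four non--general types, $H$ must be of general type, and in particular contains two transverse hyperbolic automorphisms of $T$.

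It remains to prove minimality. A group of general type admits a unique minimal nonempty invariant subtree $T_H$, namely the union of the axes of its hyperbolic elements (equivalently, the convex hull of the endpoints of those axes). By normality, $g\,T_H=T_{gHg^{-1}}=T_H$ for every $g\in G$, so $T_H$ is a nonempty $G$-invariant subtree; since the $G$-action is minimal, $T_H=T$, that is, $H$ acts minimally on $T$. This completes the plan.

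The main obstacle is the careful treatment of the cases in which $H$ has no hyperbolic element (bounded or horocyclic) and, more generally, the reliance on \emph{uniqueness} of the canonical invariant objects. The whole argument hinges on the fact that in each non--general type the fixed subtree, the fixed end, or the stabilized pair of ends is canonically attached to the action, and is therefore permuted equivariantly by the conjugation $H\mapsto gHg^{-1}$; only then does normality upgrade it to a genuinely $G$-invariant object. Verifying these uniqueness statements (and the two stated consequences of general type) is where the real content lies, whereas the remaining steps are bookkeeping within the Tits classification.
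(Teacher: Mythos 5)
Your proof is correct. Note, however, that the paper does not actually prove Lemma \ref{normal-on-tree}: it simply cites \cite[Proposition~3]{LB} (pointing to \cite[Lemma~2.10]{GGT} for a related statement), so there is no internal proof to compare against. The argument you give --- the Tits classification of actions on trees (bounded, horocyclic, lineal, focal, general type) combined with the observation that normality of $H$ forces each canonical invariant object (the fixed subtree, the unique fixed end, the unique invariant pair of ends, and in the general-type case the unique minimal $H$-invariant subtree, namely the union of the axes of hyperbolic elements of $H$) to be $G$-invariant, whence a contradiction with minimality or general type of the $G$-action --- is precisely the standard proof, and is in substance the argument of the cited reference. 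Your supporting claims (uniqueness of the fixed end for horocyclic and focal actions, uniqueness of the invariant line for lineal actions, and the fact that a general-type group fixes no end and stabilizes no pair of ends because transverse hyperbolic elements have four distinct endpoints) are all correct. The only point worth flagging is that the paper's conventions allow inversions, so in the bounded case the fixed-point set should be taken in the barycentric subdivision (or one subdivides once and for all at the outset); this is a routine adjustment that does not affect the structure of your argument.
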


The following consequence of Lemma~\ref{normal-on-tree} is surely part of the folklore, but we include a proof for completeness.
\begin{proposition}\label{triv-cent}
Let $G$ act faithfully on a tree $T$ and assume that the action is minimal and of general type.
Let $H$ be a nontrivial normal subgroup of $G$.
Then the centralizer $C_G(H)$ of $H$ in $G$ is trivial.
\end{proposition}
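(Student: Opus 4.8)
The plan is to take an arbitrary element $g\in C_G(H)$ and to show that $g=e$ by ruling out, in turn, the three possible types of $g$ as an automorphism of $T$. The key elementary observation, used in all three cases, is the following: since $g$ commutes with every $h\in H$, the subgroup $H$ preserves the fixed-point set of $g$. Indeed, if $gp=p$ for some point $p$ of $T$ (or of $T\cup\partial T$), then $g(hp)=h(gp)=hp$ for every $h\in H$, so $hp$ is again fixed by $g$; applying this to $h^{-1}$ as well shows that $H$ maps the fixed-point set of $g$ onto itself. By Lemma \ref{normal-on-tree}, the action of $H$ on $T$ is itself minimal and of general type, and I would repeatedly use these two properties of the $H$-action.

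First I would treat the case where $g$ is elliptic. Then its fixed-point set $T^g$ is a nonempty subtree of $T$, and by the observation above it is $H$-invariant. Since the action of $H$ is minimal, this forces $T^g=T$, so $g$ fixes $T$ pointwise; as the action of $G$ is faithful, $g=e$.

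Next, suppose $g$ is hyperbolic, with axis $L^g$ and endpoints $g^+,g^-\in\partial T$, which are exactly the two points of $T\cup\partial T$ fixed by $g$. By the observation, $H$ preserves the pair $\{g^+,g^-\}$. I claim this is incompatible with the $H$-action being of general type. The point is that a hyperbolic element $h\in H$ preserving $\{g^+,g^-\}$ cannot interchange these two ends: if it did, then $h^2$ would fix both, forcing $\{g^+,g^-\}$ to coincide with the endpoint pair $\{h^+,h^-\}$ of the axis of $h$, so that $h$ would fix $g^+$ and $g^-$ individually, contradicting that $h$ swaps them. Hence every hyperbolic element of $H$ fixes both $g^+$ and $g^-$, and therefore has axis equal to $L^g$. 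In particular no two hyperbolic elements of $H$ can have disjoint fixed-point sets, contradicting the fact that a general type $H$-action contains two transverse hyperbolic automorphisms. Thus $g$ cannot be hyperbolic.

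Finally, if $g$ is an inversion, it fixes the midpoint $m$ of some edge $\{v,w\}$ of $T$ in the geometric realization, and $m$ is its only fixed point there. By the observation, every $h\in H$ fixes $m$, hence either fixes or interchanges $v$ and $w$, so $H$ preserves the edge $\{v,w\}$. The corresponding one-edge subtree is then a proper $H$-invariant subtree of $T$ (the tree is infinite, since the $H$-action is of general type), contradicting minimality. Therefore $g$ is neither an inversion nor hyperbolic, so it must be elliptic, and the first case gives $g=e$. The main obstacle is the hyperbolic case: one has to extract a genuine contradiction with general type from the mere invariance of a boundary pair, and this is exactly what the no-swapping argument above supplies.
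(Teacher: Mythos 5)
Your proof is correct, but it takes a genuinely different route from the paper's. The paper argues in the opposite direction: it fixes a single hyperbolic element $h\in H$ (which exists since the $H$-action is of general type by Lemma \ref{normal-on-tree}) and shows that its axis $L^h$ is invariant under all of $C_G(H)$, using $gL^h = ghL^h = hgL^h$ and uniqueness of the $h$-invariant axis; since the $G$-action is of general type, $L^h\neq T$, so $C_G(H)$ does not act minimally, and because $C_G(H)$ is itself normal in $G$ (as $H$ is normal), a \emph{second} application of Lemma \ref{normal-on-tree} forces $C_G(H)$ to be trivial. You instead fix $g\in C_G(H)$ and analyze how $H$ moves the fixed-point set of $g$, splitting into the elliptic, hyperbolic and inversion cases; you invoke Lemma \ref{normal-on-tree} only once (for $H$) and then use only minimality and general type of the $H$-action together with faithfulness of the $G$-action. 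The paper's argument is shorter and exploits the normality of the centralizer in a slick way; yours is more elementary after the single use of the lemma and in fact proves a marginally stronger statement: any subgroup whose action on $T$ is minimal and of general type has trivial centralizer in every group acting faithfully on $T$, with normality of $H$ entering only through Lemma \ref{normal-on-tree}. All three of your cases check out against the facts recorded in Section \ref{irred-trees}: the fixed subtree $T^g$ of an elliptic element is $H$-invariant, a hyperbolic element fixes exactly the two ends of its axis (which underpins your no-swapping argument), and an inversion fixes only the midpoint of the inverted edge, so the one-edge subtree would be a proper $H$-invariant subtree.
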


\begin{proof}
Since the action of $H$ on $T$ is of general type by Lemma~\ref{normal-on-tree}, $H$ contains a hyperbolic element $h$. Let $g\in C_G(H)$. Then $gh=hg$, and the axis $L^h$ is invariant under the action of $g$.
Indeed, we have $g L^h = g h L^h = h g L^h$, so $g L^h$ is another infinite $h$-invariant axis, and so $g L^h = L^h$. 
Thus $L^h$ is invariant under the action of $C_G(H)$ on $T$. The action of $G$ on $T$ being of general type, we have $L^h\neq T$. Thus the action of $C_G(H)$ on $T$ is not minimal. Since $H$ is normal in $G$, $C_G(H)$ is normal in $G$ too, so Lemma~\ref{normal-on-tree} gives that $C_G(H)$ must be trivial.
\end{proof}

If $G$ is the free product of two groups not both of order $2$ and $H$ is a normal subgroup of $G$, then \cite[Theorem~5.2]{Ursu} says that $H$ is relatively $C^*$-simple in $G$, hence that $H\leq G$ is $C^*$-irreducible by Theorem \ref{FCGH}. This may be generalized as follows. 
\begin{theorem} \label{tree-C*s}
Assume $G$ has a faithful, minimal action of general type on a tree $T$, and let $H$ be a nontrivial normal subgroup of $G$.
Then the following conditions are equivalent:
\begin{itemize}
\item[(i)] $G$ is $C^*$-simple.
\item[(ii)] $H$ is $C^*$-simple.
\item[(iii)] $H\leq G $ is $C^*$-irreducible.
\item[(iv)] $(H\leq G, \sigma) $ is $C^*$-irreducible for every two-cocycle $\sigma$ on $G$.
\end{itemize}
\end{theorem}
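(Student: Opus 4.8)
The plan is to reduce everything to the triviality of the centralizer $C_G(H)$, which the hypotheses already deliver through Proposition~\ref{triv-cent}. Once we know that $C_G(H)=\{e\}$, each of the four conditions becomes a statement about $C^*$-simplicity of $H$ (or $G$), and the equivalences follow by assembling previously established results rather than by any new computation. So the first step is simply to record, via Proposition~\ref{triv-cent} (which itself rests on Lemma~\ref{normal-on-tree}), that $C_G(H)$ is trivial; this is the single geometric input, and everything afterwards is formal.

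For the equivalence (i) $\Leftrightarrow$ (ii) I would appeal to \cite[Theorem~1.4]{BKKO}, which states that for a normal subgroup $H$ of $G$, the group $G$ is $C^*$-simple if and only if both $H$ and $C_G(H)$ are $C^*$-simple. Since the trivial group $C_G(H)$ is $C^*$-simple, this criterion collapses to: $G$ is $C^*$-simple if and only if $H$ is. For (ii) $\Leftrightarrow$ (iii) I would apply Theorem~\ref{FCGH}, according to which $H\leq G$ is $C^*$-irreducible if and only if $H$ is $C^*$-simple and $C_G(H)$ is trivial; as the latter holds automatically by Proposition~\ref{triv-cent}, this reduces exactly to (ii).

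Finally, for (iii) $\Leftrightarrow$ (iv), the implication (iv) $\Rightarrow$ (iii) is immediate by specializing to the trivial two-cocycle, while (iii) $\Rightarrow$ (iv) is precisely Corollary~\ref{twistedC*}, which promotes $C^*$-irreducibility of $H\leq G$ to $C^*$-irreducibility of $(H\leq G,\sigma)$ for an arbitrary two-cocycle $\sigma$, using the normality of $H$. Chaining these gives (i) $\Leftrightarrow$ (ii) $\Leftrightarrow$ (iii) $\Leftrightarrow$ (iv).

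I do not expect a genuine obstacle in the argument itself, since the substantive content is already packaged in Proposition~\ref{triv-cent} and in the cited theorems of \cite{BKKO} and Theorem~\ref{FCGH}. The only points requiring a moment's care are the harmless observation that the trivial group is $C^*$-simple (so that the \cite{BKKO} criterion genuinely simplifies), and the bookkeeping that every cited result is being applied under its standing hypothesis that $H$ is normal in $G$, which is assumed throughout; the nontriviality of $H$ is what makes Proposition~\ref{triv-cent} applicable in the first place.
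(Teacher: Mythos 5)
Your proposal is correct and follows essentially the same route as the paper: triviality of $C_G(H)$ from Proposition~\ref{triv-cent}, then (i)$\Leftrightarrow$(ii) via \cite[Theorem~1.4]{BKKO}, (ii)$\Leftrightarrow$(iii) via Theorem~\ref{FCGH}, and (iii)$\Leftrightarrow$(iv) via Corollary~\ref{twistedC*} (with the trivial-cocycle specialization for the converse, which the paper leaves implicit).
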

\begin{proof} Since $C_G(H)$ is trivial by Proposition \ref{triv-cent}, (i) is equivalent to (ii) by \cite[Theorem 1.4]{BKKO}, while Theorem \ref{FCGH} gives that (ii) and (iii) are equivalent. The equivalence between (iii) and (iv) follows from Corollary \ref{twistedC*}. 
\end{proof}

We recall that an action of a group $G$ on a set $X$ is called \emph{strongly faithful} if for any finite subset $F \subseteq G \setminus \{e\}$, there exists $x \in X$ such that $fx\not= x$ for all $f \in F$. Also, an action of $G$ on a topological space $Y$ is said to be \emph{topologically free} if the set $Y^g:=\{y \in Y: gy = y\}$ has empty interior for each $g\in G \setminus \{e\}$.  Now, consider a minimal action of $G$ of general type on a tree $T$. As shown in  \cite[Proposition~3.8]{BIO}, such an action is strongly faithful if and only if the induced action of $G$ on $\partial T$ (equipped with the relative shadow topology) is topologically free, if and only if the induced action of $G$ on the closure $\overline{\partial T}$ (of $\partial T$ in $T\cup \partial T$ w.r.t.~the shadow topology) is topologically free. Moreover,  $G$ is $C^*$-simple (in fact a Powers group) when one of these conditions holds.  
Hence, Theorem \ref{tree-C*s} gives:
\begin{corollary} \label{str-f-C^*}
Assume that $G$ has a strongly faithful, minimal action of general type on a tree $T$,  
and let $H$ be a nontrivial normal subgroup of $G$.Then $H\leq G $ is $C^*$-irreducible. Moreover, 
$(H\leq G, \sigma) $ is $C^*$-irreducible for every two-cocycle $\sigma$ on $G$.
\end{corollary}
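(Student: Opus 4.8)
The plan is to reduce everything to Theorem \ref{tree-C*s} by checking its hypotheses and then verifying that its condition (i) holds. First I would observe that a strongly faithful action is in particular faithful: applying the definition with a singleton $F = \{g\}$ for any $g \in G \setminus \{e\}$ produces some vertex or boundary point $x$ with $gx \neq x$, so no nontrivial element of $G$ acts as the identity on $T$. Hence the standing assumptions of Theorem \ref{tree-C*s}---a faithful, minimal action of general type on $T$, together with a nontrivial normal subgroup $H$---are met for our $G$ and $H$.

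Next I would invoke the result recalled in the paragraph immediately preceding the corollary, namely \cite[Proposition~3.8]{BIO}: a strongly faithful, minimal action of general type makes $G$ a Powers group, and in particular $C^*$-simple. This is precisely condition (i) of Theorem \ref{tree-C*s}.

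Finally, the chain of equivalences (i) $\Leftrightarrow$ (iii) $\Leftrightarrow$ (iv) furnished by Theorem \ref{tree-C*s} immediately yields that $H \leq G$ is $C^*$-irreducible and that $(H \leq G, \sigma)$ is $C^*$-irreducible for every two-cocycle $\sigma$ on $G$, which is exactly the assertion.

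There is no genuine obstacle here, since the corollary is a direct specialization of Theorem \ref{tree-C*s}; the only thing to verify is that strong faithfulness is strong enough to supply both the faithfulness required to apply that theorem and, through \cite{BIO}, the $C^*$-simplicity of $G$ that triggers the equivalences. The actual content---the passage from the tree-theoretic hypotheses to triviality of $C_G(H)$, to $C^*$-simplicity, and to $C^*$-irreducibility of the (possibly twisted) inclusions---is already packaged in Proposition \ref{triv-cent}, Theorem \ref{FCGH}, Corollary \ref{twistedC*}, and Theorem \ref{tree-C*s}.
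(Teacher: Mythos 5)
Your proposal is correct and follows exactly the paper's own route: the paper likewise recalls that, by \cite[Proposition~3.8]{BIO}, a strongly faithful, minimal action of general type makes $G$ a Powers group (hence $C^*$-simple), and then invokes Theorem \ref{tree-C*s} to obtain both the untwisted and twisted $C^*$-irreducibility of $H\leq G$. Your extra remark that strong faithfulness implies faithfulness (take $F=\{g\}$) is a small point the paper leaves implicit, but it is exactly what is needed to apply that theorem.
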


\begin{example} Let $G=A*_C B$ be an amalgam of groups which is  nondegenerate, i.e., at least one of the indices $[A:C], [B:C]$ is strictly larger than $2$. Then the action of $G$ on its Bass-Serre tree $T$ is minimal and of general type, cf.~\cite[Proposition 19]{HP}. Moreover, \cite[Proposition 5.3]{amalgams} gives that this action is strongly faithful (equivalently, the action of $G$ on $\partial T$ is topologically free) whenever the so-called interior group of $G$, $\operatorname{int}(G) $, is trivial. We also note that \cite[Proposition 3.2]{amalgams} characterizes in several ways when this happens, e.g.,~it is equivalent to require that for every finite subset $F$ of $C\setminus \{e\}$, there exists $g\in G$ such that $gFg^{-1} \cap C = \emptyset$.
When $\operatorname{int}(G)$ is trivial and $H$ is a nontrivial normal subgroup of $G$, we can apply Corollary \ref{str-f-C^*} and deduce that $(H\leq G, \sigma) $ is $C^*$-irreducible for any two-cocycle $\sigma$ on $G$. 

We note that if $C$ is trivial, i.e.,  $G= A*B$ with $\max\{ \lvert A\rvert, \lvert B\rvert \} \geq 3$, then every two-cocycle on $G$ is similar to a two-cocycle of the form $\sigma_1*\sigma_2$, where  $\sigma_1$ (resp.~$\sigma_2$) is a two-cocycle on $A$ (resp.~$B$), see for instance \cite[Section~4]{primeness} and references therein. It is therefore easy  to obtain concrete examples $(H \leq A*B, \sigma)$ that are $C^*$-irreducible. A natural choice here is to let $H$ be the kernel of the canonical homomorphism from $A*B$ into $A\times B$.
\end{example}
 
\begin{example}
Similarly, let $G= \operatorname{HNN}(\Gamma, A, \theta)$ be an HNN-extension which is nonascending, i.e., we have $A\neq G \neq  \theta(A) $. Then the action of $G$ on  its Bass-Serre tree $T$ is minimal and of general type, cf.~\cite[Proposition 20]{HP} (see also \cite[Proposition 4.16]{BIO}). Moreover, \cite[Proposition 4.18]{BIO} gives that this action is strongly faithful (equivalently, the action of $G$ on $\partial T$ is topologically free) whenever the interior group of $G$, $\operatorname{int}(G) $, is trivial. Several characterizations of $\operatorname{int}(G) $ being trivial are given in \cite[Theorem 4.10]{BIO}, e.g., this happens if and only if  for every finite subset $F$ of $A\setminus \{e\}$, there exists $g\in G$ such that $gFg^{-1} \cap A = \emptyset$. When $\operatorname{int}(G)$ is trivial, we can apply Corollary \ref{str-f-C^*} and get that $(H\leq G, \sigma)$ is $C^*$-irreducible whenever $H$ is a  nontrivial normal subgroup of $G$ and $\sigma$ is a two-cocycle on $G$. 

To illustrate this, let $m, n\in \Z$ be such that $\min\{\lvert m\rvert, \lvert n\rvert\} \geq 2$
and $\lvert m\rvert\neq \lvert n\rvert$, and let $G=BS(m,n) = \langle t, b \mid t^{-1} b^m t = b^n\rangle $ denote the associated Baumslag-Solitar group, which is an HNN-extension with $\Gamma = \Z$, $A= m\Z$, and $\theta(mk) = nk$ for $k\in \Z$. Then it is well-known that $G$ is $C^*$-simple (see for example \cite[Theorem  3]{HP} and \cite[Example 4.21]{BIO}). Moreover, $G$, as an HNN-extension, is clearly nonascending, and its interior group is trivial. Indeed, the final part of  \cite[Example 4.21]{BIO} shows that  the so-called quasi-kernel $K_1$ of $G$  is trivial, and this is equivalent to $\operatorname{int}(G)$ being trivial by \cite[Theorem 4.10]{BIO}.
Let now $\varphi\colon G \to \Z$ be the homomorphism determined by $\varphi(t) = 1$ and $\varphi(b) =0$ and set $H= SBS(m,n) = \ker \varphi$ (as in \cite[Corollary 5]{HP}). Then we can conclude from the previous paragraph that $H\leq G$ is $C^*$-irreducible.
\end{example}
If $G$ acts on compact Hausdorff space $X$ in such a way that every orbit is dense in $X$ (i.e., the action is minimal) and  the weak*-closure of every orbit in the space of probability measures on $X$ contains a point mass (i.e., the action is strongly proximal), then the action of $G$ is called a \emph{boundary action} and $X$ is called a \emph{$G$-boundary}. Up to $G$-equivariant homeomorphism, the \emph{Furstenberg boundary} $\partial_F G$ is the unique $G$-boundary having the universal property that for any $G$-boundary $X$, there exists a (unique) $G$-equivariant, continuous surjection $\partial_F G \to X$.  As shown in \cite[Theorem  6.2]{KK} (see also \cite[Theorem 3.1]{BKKO}), $G$ is $C^*$-simple if and only if there exists a topologically free $G$-boundary, 
if and only if the action of $G$ on $\partial_F G$ is free (resp.~topologically free). 

When a faithful action of $G$ on a tree $T$ is minimal and of general type,
then  $\overline{\partial T}$ is a $G$-boundary (cf.~\cite[Lemma 3.5]{BIO}; see also \cite[Proposition~4.26]{LBMB}),
and thus also an $H$-boundary for any nontrivial normal subgroup $H$ of $G$ by Lemma~\ref{normal-on-tree}.
 Corollary \ref{str-f-C^*} can therefore also be deduced from the following result for boundary actions, which relies heavily on \cite[Theorem~1.3]{Ursu} and its proof. 
\begin{proposition}
Let $H$ be a nontrivial normal subgroup of $G$ and suppose that there 
exists a topologically free boundary action of $G$ on $X$ which restricts to a boundary action of $H$.  
Then $(H\leq G, \sigma) $ is $C^*$-irreducible for every two-cocycle $\sigma$ on $G$.
\end{proposition}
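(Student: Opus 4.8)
The plan is to reduce the twisted statement to the untwisted one and then to the two numerical conditions of Theorem \ref{FCGH}. By Corollary \ref{twistedC*}, since $H$ is normal in $G$, it suffices to prove that $H\leq G$ is $C^*$-irreducible (for the trivial cocycle); $C^*$-irreducibility of $(H\leq G,\sigma)$ for an arbitrary two-cocycle $\sigma$ then follows immediately. In turn, Theorem \ref{FCGH} reduces $C^*$-irreducibility of $H\leq G$ to showing that $H$ is $C^*$-simple and that the centralizer $C_G(H)$ is trivial. I would establish these two facts separately, using only the boundary action supplied by the hypothesis.

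For the first fact, I would observe that the restriction to $H$ of the given action is a topologically free boundary action of $H$. Indeed, topological freeness of the $G$-action means that $X^g$ has empty interior for every $g\in G\setminus\{e\}$, and this property is inherited by every $h\in H\setminus\{e\}$ because $H\subseteq G$; that the restricted action remains a boundary action of $H$ is exactly the standing hypothesis. Thus $X$ is a topologically free $H$-boundary, and \cite[Theorem 6.2]{KK} (or \cite[Theorem 3.1]{BKKO}) gives that $H$ is $C^*$-simple.

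The main point is the triviality of $C_G(H)$, and here I would argue directly with the dynamics of the $H$-action. Fix $g\in C_G(H)$. Since $g$ commutes with every element of $H$, the homeomorphism $x\mapsto gx$ is $H$-equivariant, so its fixed-point set $X^g$ is a closed $H$-invariant subset of $X$; by minimality of the $H$-action it is either empty or all of $X$. To rule out the empty case, pick any $x_0\in X$ and consider the probability measure $\mu=\tfrac12(\delta_{x_0}+\delta_{gx_0})$. By strong proximality of the $H$-action there is a net $(h_i)$ in $H$ with $h_i\mu\to\delta_z$ for some $z\in X$. Using $h_ig=gh_i$ we have $h_i\mu=\tfrac12(\delta_{h_ix_0}+\delta_{g h_ix_0})$, and since a convex combination of two point masses can converge to a single point mass only if both atoms converge to it, we get $h_ix_0\to z$ and $g(h_ix_0)\to z$; continuity of $g$ then forces $gz=z$. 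Hence $X^g\neq\varnothing$, so $X^g=X$ and $g$ acts trivially on $X$. As topological freeness makes the $G$-action faithful, this yields $g=e$, proving $C_G(H)=\{e\}$.

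Combining the two facts with Theorem \ref{FCGH} shows that $H\leq G$ is $C^*$-irreducible, and Corollary \ref{twistedC*} then upgrades this to $C^*$-irreducibility of $(H\leq G,\sigma)$ for every two-cocycle $\sigma$. The only delicate step is the triviality of $C_G(H)$; the elementary weak*-convergence argument above replaces the tree-geometric reasoning of Proposition \ref{triv-cent}, and I expect it to be the dynamical heart of any proof, including one routed through the relative $C^*$-simplicity machinery of \cite[Theorem~1.3]{Ursu}. I would double-check the claim that a weak*-convergent net of two-atom probability measures forces both atoms to converge (a routine test against continuous bump functions supported near $z$), since this is the one place where compactness and the Hausdorff structure of $X$ enter.
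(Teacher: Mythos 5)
Your proof is correct, but its dynamical core is genuinely different from the paper's. The paper never manipulates measures on $X$ at all: it uses the $H$-boundary hypothesis to produce an $H$-equivariant continuous surjection $\partial_F H \to X$, invokes \cite[Lemma 5.2]{BKKO} to extend the $H$-action on $\partial_F H$ to a $G$-action, uses \cite[Corollary 4.3]{Ursu} to see that this surjection is in fact $G$-equivariant, deduces from topological freeness of $G\curvearrowright X$ that $G$ acts topologically freely on $\partial_F H$, and then obtains both key facts --- $C^*$-simplicity of $H$ and triviality of $C_G(H)$ --- in one stroke from \cite[Theorem 1.3]{Ursu}; the endgame (Theorem \ref{FCGH} followed by Corollary \ref{twistedC*}) is identical to yours. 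You instead work entirely on the given boundary $X$: $C^*$-simplicity of $H$ comes from observing that $X$ is a topologically free $H$-boundary and citing \cite[Theorem 6.2]{KK} (or \cite[Theorem 3.1]{BKKO}), and triviality of $C_G(H)$ comes from your hands-on minimality/strong-proximality argument, whose key convergence step is indeed routine: if $\tfrac12(\delta_{a_i}+\delta_{b_i})\to\delta_z$ weak$^*$, testing against a Urysohn function that is $1$ at $z$ and vanishes off a given neighbourhood forces $f(a_i)\to 1$ and $f(b_i)\to 1$, hence $a_i\to z$ and $b_i\to z$. What your route buys is self-containedness: no Furstenberg boundary, no extension lemma, no relative $C^*$-simplicity machinery beyond Theorem \ref{FCGH} itself; moreover your centralizer argument isolates a clean general fact --- in any faithful action of $G$ whose restriction to a subgroup $H$ is a boundary action, $C_G(H)$ acts trivially, hence is trivial --- which does not even use normality of $H$ and subsumes the tree-geometric Proposition \ref{triv-cent}. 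What the paper's route buys is the stronger intermediate conclusion that $G$ acts topologically freely on $\partial_F H$, which is precisely Ursu's criterion for relative $C^*$-simplicity (equivalently, plumpness) of $H \leq G$; this situates the proposition inside that framework, as the paper signals by saying its proof ``relies heavily on \cite[Theorem 1.3]{Ursu}.''
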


\begin{proof}
Since $X$ is an $H$-boundary, there exists an $H$-equivariant, continuous surjection map $\partial_F H \to X$. 
By~\cite[Lemma 5.2]{BKKO}, the action of $H$ on $\partial_F H$ extends in a unique way to an action of $G$ on $\partial_F H$, so $\partial_F H$ is a $G$-boundary.
Further, it follows from \cite[Corollary~4.3]{Ursu} that the surjection  $\partial_F H \to X$ is $G$-equivariant.
Hence, if $g\in G$ is such that $(\partial_F H)^g$ has nonempty interior, then $X^g$ has nonempty interior. 
By assumption, this means that $g=e$. Thus, the action of $G$ on $\partial_F H$ is topologically free, and 
\cite[Theorem~1.3]{Ursu} gives that $H$ is $C^*$-simple and $C_G(H)$ is trivial.
The conclusion follows  from Theorem~\ref{FCGH} and Corollary \ref{twistedC*}. 
\end{proof}

\section{On $C^*$-simplicity and normal subgroups}

Let $H$ be a normal subgroup of $G$. As recalled in Section \ref{irred-rtgc}, we then have that $G$ is $C^*$-simple if and only if $H$ and $C_G(H)$ are both $C^*$-simple, cf.~\cite[Theorem 1.4]{BKKO}. It would be interesting to know what kind of assumptions are needed to ensure that a twisted version of this result holds. Our goal in this section is to prove a result in this direction, cf.~Corollary \ref{FC-sigma-inv}.  

\begin{lemma} \label{prime-Kleppner}
Let $H$ be a normal subgroup of $G$. Assume that $H$ is prime and that 
$(H,\sigma)$ and $(C_G^\sigma(H),\sigma)$ both satisfy Kleppner's condition.
Then $(G,\sigma)$ satisfies Kleppner's condition.
\end{lemma}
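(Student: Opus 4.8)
The plan is to recast Kleppner's condition for $(G,\sigma)$ in contrapositive form and reduce it, via the primeness hypothesis, to the twisted centralizer $C_G^\sigma(H)$. Recall that $(G,\sigma)$ satisfies Kleppner's condition precisely when every element of the FC-center $FC(G)$ that is $\sigma$-regular (w.r.t.~$G$) equals $e$. So I would start with an arbitrary $g\in FC(G)$ which is $\sigma$-regular w.r.t.~$G$, and aim to show $g=e$.

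The first, and main, step is to establish that such a $g$ actually lies in $C_G^\sigma(H)$. Since the $H$-conjugacy class $g^H$ is contained in the finite $G$-conjugacy class $g^G$, it is finite, so $g\in FC_G(H)$. Moreover, $\sigma$-regularity w.r.t.~$G$ trivially implies $\sigma$-regularity w.r.t.~the subgroup $H$, as one merely restricts the commuting elements to lie in $H$; hence in fact $g\in FC_G^\sigma(H)$. This is where the hypotheses on $H$ enter: because $H$ is prime and $(H,\sigma)$ satisfies Kleppner's condition, Lemma~\ref{sigma-centralizer} gives $FC_G^\sigma(H)=C_G^\sigma(H)$, and therefore $g\in C_G^\sigma(H)$.

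Once $g$ is known to lie in $C_G^\sigma(H)$, the second step invokes Kleppner's condition for the pair $(C_G^\sigma(H),\sigma)$. The $C_G^\sigma(H)$-conjugacy class of $g$ is contained in $g^G$, hence finite, so $g$ belongs to $FC(C_G^\sigma(H))$. Likewise, $g$ is $\sigma$-regular within $C_G^\sigma(H)$: if $c\in C_G^\sigma(H)$ commutes with $g$, then since $c\in G$ and $g$ is $\sigma$-regular w.r.t.~$G$, we have $\sigma(g,c)=\sigma(c,g)$. Thus $g$ is a $\sigma$-regular element of the FC-center of $C_G^\sigma(H)$, and the assumption that $(C_G^\sigma(H),\sigma)$ satisfies Kleppner's condition forces $g=e$, which completes the argument.

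I expect the only genuinely nontrivial ingredient to be the reduction $FC_G^\sigma(H)=C_G^\sigma(H)$ in the first step, which is exactly the content of Lemma~\ref{sigma-centralizer} and is the sole place where primeness of $H$ (together with Kleppner's condition for $(H,\sigma)$) is used. The remaining verifications---inheritance of finiteness of conjugacy classes under passage to subgroups, and descent of $\sigma$-regularity from $G$ to its subgroups---are routine.
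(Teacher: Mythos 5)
Your proposal is correct and follows essentially the same route as the paper's proof: reduce to showing that a $\sigma$-regular element of $FC(G)$ lies in $FC_G^\sigma(H)$, apply Lemma \ref{sigma-centralizer} (primeness of $H$ plus Kleppner's condition for $(H,\sigma)$) to place it in $C_G^\sigma(H)$, and then conclude by Kleppner's condition for $(C_G^\sigma(H),\sigma)$. The only difference is that you spell out the routine verifications (finiteness of conjugacy classes under passage to subgroups, descent of $\sigma$-regularity) which the paper leaves implicit.
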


\begin{proof}
Suppose that $g\in G$ is $\sigma$-regular w.r.t.~$G$ and $\lvert g^G\rvert < \infty$.
Then $g\in FC_G^\sigma(H)$, so Lemma~\ref{sigma-centralizer} gives that $g\in C_G^\sigma(H)$. Now, it is clear that 
$g$ is also $\sigma$-regular w.r.t.~$C_G^\sigma(H)$ and $\lvert g^{C^\sigma_G(H)}\rvert < \infty$. 
Since $(C_G^\sigma(H),\sigma)$ satisfies Kleppner's condition,
we get that $g=e$. This shows that $(G,\sigma)$ satisfies Kleppner's condition.
\end{proof}

\begin{lemma}\label{normal-hypercentral}
Suppose that $H$ is a nontrivial normal subgroup of a $FC$-hypercentral group $G$.
Then $H\cap FC(G)$ is nontrivial.
\end{lemma}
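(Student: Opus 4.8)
The plan is to exploit the upper FC-central series of $G$. Recall that this is the ascending series of characteristic (hence normal) subgroups $\{e\} = Z_0 \leq Z_1 \leq \cdots$ defined by $Z_{\alpha+1}/Z_\alpha = FC(G/Z_\alpha)$, with unions taken at limit ordinals; in particular $Z_1 = FC(G)$. The first thing I would record is that, since $G$ is FC-hypercentral, this series exhausts $G$, i.e.\ $Z_\gamma = G$ for some ordinal $\gamma$. This is the easy direction of the standard characterization: if the series stabilized at a proper subgroup $Z_\infty$, then $G/Z_\infty$ would be a nontrivial quotient with trivial FC-center, hence a nontrivial icc quotient of $G$, contradicting FC-hypercentrality.

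Next, since $H \neq \{e\}$ and $H \subseteq G = Z_\gamma$, I would let $\alpha$ be the least ordinal with $H \cap Z_\alpha \neq \{e\}$. A short argument rules out $\alpha$ being a limit ordinal: at a limit one has $Z_\alpha = \bigcup_{\beta < \alpha} Z_\beta$, so a nontrivial element of $H \cap Z_\alpha$ would already lie in some $H \cap Z_\beta$ with $\beta < \alpha$, against minimality. Hence $\alpha = \beta + 1$ is a successor, and I may fix some $x \in (H \cap Z_{\beta+1}) \setminus \{e\}$, where by minimality $H \cap Z_\beta = \{e\}$.

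The crux, and the step I expect to be the main point, is upgrading the information ``$x$ has finite conjugacy class modulo $Z_\beta$'' to ``$x$ has finite conjugacy class in $G$''. By construction the image $\bar{x}$ of $x$ in $G/Z_\beta$ lies in $FC(G/Z_\beta)$, so its $G/Z_\beta$-conjugacy class is finite. Now $x^G \subseteq H$ because $H$ is normal, and the quotient map sends $x^G$ onto the finite set $\bar{x}^{\,G/Z_\beta}$. The key observation is that this map is injective on $x^G$: if $y, y' \in x^G$ have the same image, then $y^{-1}y' \in Z_\beta \cap H = \{e\}$, using that $Z_\beta$ is normal and $H \cap Z_\beta$ is trivial. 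Therefore $x^G$ is finite, i.e.\ $x \in FC(G)$, and since $x \in H \setminus \{e\}$ this shows that $H \cap FC(G)$ is nontrivial. The only mild obstacle is the bookkeeping with the transfinite series and confirming the characterization of FC-hypercentrality in the form needed; everything else is elementary.
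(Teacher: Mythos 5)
Your proof is correct and takes essentially the same route as the paper's: both locate the first successor stage $Z_{\beta+1}$ of the FC-hypercentral series meeting $H$ nontrivially, and both use $H \cap Z_\beta = \{e\}$ (i.e.\ injectivity of $H \to G/Z_\beta$) to pull the finiteness of the conjugacy class of $\bar{x}$ in $G/Z_\beta$ back to finiteness of $x^G$ in $G$. The only difference is cosmetic: you spell out the minimal-ordinal/limit-ordinal bookkeeping and the easy direction of the characterization of FC-hypercentrality, which the paper simply asserts or cites.
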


\begin{proof}
We recall, see for example \cite[Section~4.3]{Robinson}, that $G$ is $FC$-hypercentral if and only if $G$ is equal to its $FC$-hypercenter $FCH(G)$, which is defined  as follows.  Let  $\{F_\alpha\}_\alpha$ 
be the ascending  series of normal subgroups of $G$ indexed by the ordinal numbers, given by
$F_0=\{e\}$, $F_\alpha/F_\beta =FC(G/F_\beta)$ if $\alpha=\beta+1$,
and $F_\alpha=\bigcup_{\beta<\alpha}F_\beta$ when $\alpha$ is a limit ordinal number.
This series eventually stabilizes and $FCH(G):=\lim_\alpha F_\alpha=\bigcup_\alpha F_\alpha$. 

Now, since $G= FCH(G)$, there is some $\alpha$ such that $H\cap F_\alpha=\{e\}$ while $H\cap F_{\alpha+1} \neq \{e\}$.
Pick  $h\in H\cap F_{\alpha+1}$, $h\neq e$.
As $H\cap F_\alpha=\{e\}$, the homomorphism $h'\mapsto h'F_\alpha$ from $H$ into $G/F_\alpha$ is injective. Also,  $hF_\alpha$ belongs to $ FC(G/F_\alpha)$ since $h \in F_{\alpha+1}$. We therefore get that
\[
\lvert\{ ghg^{-1} : g\in G \}\rvert = \lvert\{ ghg^{-1}F_{\alpha} : g\in G \}\rvert =  \lvert\{ (gF_\alpha) hF_\alpha (gF_{\alpha})^{-1} : g\in G \}\rvert< \infty.
\]
Hence, $h\in FC(G)$. Thus we have $e\neq h \in H\cap FC(G)$.
\end{proof}

\begin{definition} 
A subgroup $H$ of $G$ will be said to be \emph{$\sigma$-regular in $G$} if 
$h\in H$ is $\sigma$-regular w.r.t.~$G$ whenever $h$ is $\sigma$-regular w.r.t.~$H$.
\end{definition}

\begin{example} 
Let $\theta$ be irrational,  $p,q \in \N$, $H_{p, q} = p\Z \times q\Z$ and $\sigma_\theta$ be the two-cocycle on $\Z^2$ defined in Example \ref{nct-I}. Then  $H_{p, q}$ is $\sigma$-regular in $\Z^2$. Indeed, $\mathbf{y}=(0,0)$ is the only element of  $H_{p,q}$ which is $\sigma$-regular w.r.t.~$H_{p, q}$ (because $(H_{p, q}, \sigma_\theta)$ satisfies Kleppner's condition), and $(0,0)$ is obviously $\sigma$-regular w.r.t.~$\Z^2$. The same argument shows that if $H\leq G$, $H$ is  abelian and  $(H, \sigma)$ satisfies Kleppner's condition, then $H$ is $\sigma$-regular in $G$.
\end{example}

\begin{example}
Let $G= \mathbb{F}_2\times \Z_2$, $H=\mathbb{F}_2\times\{0\} $ and $\sigma_j$ be as in Example \ref{F2Z2} for $j\in \{1,2,3\}$.
Then $H$ is not $\sigma_j$-regular in $G$.
Indeed, every $(x, 0) \in H$ is $\sigma_j$-regular w.r.t.~$H$ (because $\sigma_j$ restricts to the trivial two-cocycle on $H$). But if we pick $x\in \mathbb{F}_2$ such that $\theta_j(x)$ is odd, then 
$(x,1)$ commutes with $(x, 0)$, and 
\[ \sigma_j((x,0), (x, 1)) = 1 \neq -1 =  \sigma_j((x,1), (x, 0)),\]
i.e., $(x, 0)$ is not $\sigma_j$-regular w.r.t.~$G$.
\end{example}

It is not difficult  to see that if $(G,\sigma)$ satisfies Kleppner's condition and $H$ is a $\sigma$-regular subgroup of $G$ having finite index, then $(H,\sigma)$ satisfies Kleppner's condition. If $H$ is of infinite index, this might not be true. However, we have:
\begin{proposition}\label{sigma-invariant-subgroup}
Let $G$ be a FC-hypercentral group and $H$ be a normal subgroup of $G$.
Assume that $(G,\sigma)$ satisfies Kleppner's condition and that
$H$ is prime and $\sigma$-regular in $G$.
Then $(H,\sigma)$ satisfies Kleppner's condition.
\end{proposition}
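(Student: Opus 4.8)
The plan is to argue by contradiction. Suppose $(H,\sigma)$ fails Kleppner's condition. Then, writing
$FC_H^\sigma(H) := \{h \in FC(H) : h \text{ is } \sigma\text{-regular w.r.t.~}H\}$
for the set obtained by specializing the ambient group in the definition of $FC_G^\sigma(H)$ to $H$, there is some $h \in FC_H^\sigma(H)$ with $h \neq e$. Since $H$ is prime, $FC(H)$ is torsion-free (and abelian) by Proposition~\ref{prime-def}, and I will manufacture from $h$ a nontrivial element of $G$ having a finite $G$-conjugacy class and being $\sigma$-regular w.r.t.~$G$, contradicting the assumption that $(G,\sigma)$ satisfies Kleppner's condition.

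The key observation is that $FC_H^\sigma(H)$ is invariant under conjugation by all of $G$. Indeed, let $x \in FC_H^\sigma(H)$ and $g \in G$. Since the FC-center $FC(H)$ is a characteristic subgroup of the normal subgroup $H$, it is invariant under $G$-conjugation, so $gxg^{-1} \in FC(H)$; moreover $\lvert (gxg^{-1})^H\rvert = \lvert x^H\rvert < \infty$, because conjugation by $g$ permutes the $H$-conjugacy classes (as $gHg^{-1}=H$). Next, $x$ is $\sigma$-regular w.r.t.~$G$ because $H$ is $\sigma$-regular in $G$; as $\sigma$-regularity w.r.t.~$G$ is a property of $G$-conjugacy classes, $gxg^{-1}$ is again $\sigma$-regular w.r.t.~$G$, hence a fortiori $\sigma$-regular w.r.t.~$H$. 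Therefore $gxg^{-1} \in FC_H^\sigma(H)$. Consequently the subgroup $P := \langle FC_H^\sigma(H)\rangle$ is a nontrivial normal subgroup of $G$ contained in $FC(H)$.

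Now I would apply Lemma~\ref{normal-hypercentral} to the nontrivial normal subgroup $P$ of the FC-hypercentral group $G$, obtaining a nontrivial element $p \in P \cap FC(G)$. Writing $p$ as a product of elements of $FC_H^\sigma(H)$ (using Lemma~\ref{sigma-inverse} to handle inverses) and noting that $FC_H^\sigma(H) \subseteq FC_G^\sigma(H)$, Lemma~\ref{sigma-product} furnishes an $N \geq 1$ such that $p^N$ is $\sigma$-regular w.r.t.~$H$, hence $\sigma$-regular w.r.t.~$G$ since $H$ is $\sigma$-regular in $G$. As $p \in FC(G)$, we also have $p^N \in FC(G)$, i.e.\ $\lvert (p^N)^G\rvert < \infty$; and since $p$ lies in the torsion-free group $FC(H)$ with $p \neq e$, we get $p^N \neq e$. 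Thus $(p^N)^G$ is a nontrivial finite conjugacy class in $G$ consisting of elements that are $\sigma$-regular w.r.t.~$G$, contradicting Kleppner's condition for $(G,\sigma)$. Hence $(H,\sigma)$ satisfies Kleppner's condition.

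The main obstacle is establishing the $G$-invariance of $FC_H^\sigma(H)$, since $\sigma$-regularity w.r.t.~$H$ is not visibly preserved under conjugation by elements lying outside $H$. This is precisely the point where the hypothesis that $H$ is $\sigma$-regular in $G$ is indispensable: it upgrades the $H$-local notion of $\sigma$-regularity to the conjugation-stable notion of $\sigma$-regularity w.r.t.~$G$, which is what makes $P$ normal and allows the FC-hypercentral Lemma~\ref{normal-hypercentral} to be invoked. A secondary point requiring care is that Lemma~\ref{sigma-product} only returns a power of $p$, so the torsion-freeness of $FC(H)$—that is, the primeness of $H$—is essential to guarantee that this power is nontrivial.
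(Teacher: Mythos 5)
Your proof is correct, and its skeleton is the same as the paper's: argue by contraposition, produce from the failure of Kleppner's condition for $(H,\sigma)$ a nontrivial normal subgroup of $G$ inside the torsion-free group $FC(H)$, intersect it with $FC(G)$ via Lemma~\ref{normal-hypercentral}, pass to a suitable power via Lemma~\ref{sigma-product}, and use torsion-freeness (primeness of $H$) to keep that power nontrivial, contradicting Kleppner's condition for $(G,\sigma)$.

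There are two differences worth recording. First, the paper takes the normal subgroup $N$ generated by the single $G$-conjugacy class $h^G$ of one witness $h$, whereas you take the subgroup generated by all of $FC_H^\sigma(H)$ and prove that this whole set is $G$-invariant; both devices work and use the same ingredients ($FC(H)$ characteristic in $H$, and the $\sigma$-regularity of $H$ in $G$ to propagate regularity across $G$-conjugation). Second, and more substantively, the two proofs differ in how Lemma~\ref{sigma-product} is invoked. The paper asserts that the elements of $h^G$ ``clearly belong to $FC_G^\sigma(G)$'' and applies the lemma with ambient subgroup $G$; this presupposes that $h$ has a \emph{finite $G$-conjugacy class}, which does not follow from the hypotheses (e.g., in the discrete Heisenberg group $G$ with $H=\langle b,c\rangle$ and $\sigma$ trivial, $b$ has finite $H$-class but infinite $G$-class). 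Your version avoids this: you verify membership in $FC_G^\sigma(H)$, apply Lemma~\ref{sigma-product} relative to $H$ to get that $p^N$ is $\sigma$-regular w.r.t.~$H$, and only then upgrade to $\sigma$-regularity w.r.t.~$G$ using the hypothesis that $H$ is $\sigma$-regular in $G$, while the finiteness of the $G$-class of $p^N$ comes separately from $p\in FC(G)$. This is exactly the repair the paper's argument needs, so your write-up is, at that step, more careful than the published one.
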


\begin{proof}
Assume $h\in H$ is $\sigma$-regular w.r.t.~$H$ and $h^H$ is finite.
Then $h$ is $\sigma$-regular w.r.t.~$G$ (since $H$ is $\sigma$-regular in $G$).
Let $N$ be the normal subgroup of $G$ generated by $h$,
i.e., the subgroup generated by $h^G$. 

Note that $FC(H)$ is a characteristic subgroup of $H$, so it is normal in $G$ (this can also be checked directly).  
It follows that $N$ is contained in $FC(H)$, so $N$ is torsion-free (since $H$ is prime). 
Moreover, using Lemma \ref{normal-hypercentral}, we get that $N\cap FC(G)$ is nontrivial.

Let $y\in N\cap FC(G)$, $y\neq e$. 
Then $y$ can be written as a finite product of elements from $h^G$ and their inverses, 
and all these elements clearly belong to $FC_G^\sigma(G) = \{ g \in FC(G) : g \text{ is $\sigma$-regular w.r.t.}~G\}$.
Thus, by Lemma~\ref{sigma-product}, there exists some $n\in \N$ such that $y^n \in FC_G^\sigma(G)$. 
As $y \in N$, and $N$ is torsion-free, we can conclude that $FC_G^\sigma(G)$ is nontrivial,
i.e., $(G,\sigma)$ does not satisfy Kleppner's condition.
\end{proof}

\begin{corollary}\label{FC-sigma-inv}
Let $G$ be a FC-hypercentral group and $H$ be a normal subgroup.
Assume that $H$ is prime and $\sigma$-regular in $G$.
Consider the following two conditions:
\begin{itemize}
	\item[(i)] $(H,\sigma)$ and $(C_G^\sigma(H),\sigma)$ are both $C^*$-simple.
	\item[(ii)] $(G,\sigma)$ is $C^*$-simple.
\end{itemize}
Then \textup{(i)}~$\Rightarrow$~\textup{(ii)}. If we also assume that $C_G^\sigma(H)$ is prime and $\sigma$-regular in $G$, then \textup{(ii)}~$\Rightarrow$~\textup{(i)}.
\end{corollary}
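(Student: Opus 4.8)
The plan is to reduce both implications to statements about Kleppner's condition, exploiting the fact that for FC-hypercentral pairs $C^*$-simplicity and Kleppner's condition coincide, cf.~\cite[Theorem~3.1]{FCH}. Since $G$ is FC-hypercentral, so are its subgroups $H$ and $C_G^\sigma(H)$ (the class of FC-hypercentral groups is closed under subgroups, cf.~\cite[Section~4.3]{Robinson}), and this equivalence is therefore available for each of the three pairs $(G,\sigma)$, $(H,\sigma)$ and $(C_G^\sigma(H),\sigma)$. With this in hand, the two implications become consequences of Lemma~\ref{prime-Kleppner} and Proposition~\ref{sigma-invariant-subgroup}, respectively.

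For (i)~$\Rightarrow$~(ii), I would start from the assumption that $(H,\sigma)$ and $(C_G^\sigma(H),\sigma)$ are both $C^*$-simple. As recalled in the Preliminaries, any $C^*$-simple pair satisfies Kleppner's condition, so both $(H,\sigma)$ and $(C_G^\sigma(H),\sigma)$ satisfy Kleppner's condition. Since $H$ is prime, Lemma~\ref{prime-Kleppner} then gives that $(G,\sigma)$ satisfies Kleppner's condition, and the FC-hypercentrality of $G$ upgrades this to $(G,\sigma)$ being $C^*$-simple. I note that this direction uses only that $G$ is FC-hypercentral, together with the primeness of $H$.

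For (ii)~$\Rightarrow$~(i), I would assume in addition that $C_G^\sigma(H)$ is prime and $\sigma$-regular in $G$, and that $(G,\sigma)$ is $C^*$-simple, so that $(G,\sigma)$ satisfies Kleppner's condition. The idea is to apply Proposition~\ref{sigma-invariant-subgroup} twice. First, since $H$ is a prime normal subgroup that is $\sigma$-regular in $G$, the proposition yields that $(H,\sigma)$ satisfies Kleppner's condition, whence $(H,\sigma)$ is $C^*$-simple by FC-hypercentrality of $H$. Second, recalling that $C_G^\sigma(H)$ is a normal subgroup of $G$ because $H$ is normal, and using the extra hypotheses that it is prime and $\sigma$-regular in $G$, I would apply Proposition~\ref{sigma-invariant-subgroup} with $C_G^\sigma(H)$ in the role of the normal subgroup to obtain that $(C_G^\sigma(H),\sigma)$ satisfies Kleppner's condition, hence is $C^*$-simple. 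This establishes (i).

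The only points requiring genuine care are structural rather than computational: one must confirm that the relevant subgroups inherit FC-hypercentrality from $G$ so that \cite[Theorem~3.1]{FCH} applies, and that $C_G^\sigma(H)$ is indeed normal in $G$ so that it is an admissible input for Proposition~\ref{sigma-invariant-subgroup}; both facts have already been recorded earlier. I expect the main conceptual subtlety to be the asymmetry between the two directions. The forward implication needs no $\sigma$-regularity hypothesis, whereas the reverse relies on Proposition~\ref{sigma-invariant-subgroup}, whose proof in turn requires both primeness and $\sigma$-regularity of the subgroup one restricts to; this is precisely why the additional hypotheses on $C_G^\sigma(H)$ must be imposed for (ii)~$\Rightarrow$~(i), and it explains why the statement cannot be fully symmetric.
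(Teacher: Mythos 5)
Your proof is correct and follows essentially the same route as the paper: both directions are reduced to Kleppner's condition via the FC-hypercentral equivalence from \cite[Theorem~3.1]{FCH}, with Lemma~\ref{prime-Kleppner} handling (i)~$\Rightarrow$~(ii) and Proposition~\ref{sigma-invariant-subgroup} (applied to $H$ and then to the normal subgroup $C_G^\sigma(H)$) handling (ii)~$\Rightarrow$~(i). Your explicit verification that $C_G^\sigma(H)$ is normal and that subgroups inherit FC-hypercentrality matches the structural points the paper also invokes.
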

\begin{proof}
Assume (i) holds. Then $(H, \sigma)$ and $(C_G^\sigma(H),\sigma)$ both satisfy Kleppner's condition, so Lemma \ref{prime-Kleppner} gives that $(G,\sigma)$ satisfies Kleppner's condition. Since $G$ is $FC$-hypercentral, we get that (ii) holds. 

Next, assume  also that $C_G^\sigma(H)$ is prime and $\sigma$-regular in $G$, and that (ii) holds. Then $(G, \sigma)$ satisfies Kleppner's condition, so Proposition \ref{sigma-invariant-subgroup} gives that $(H, \sigma)$ and $(C_G^\sigma(H),\sigma)$ both satisfy Kleppner's condition. Since $H$ and $C_G^\sigma(H)$ are $FC$-hypercentral (being  subgroups of $G$), 
we get that  $(H,\sigma)$ and $(C_G^\sigma(H),\sigma)$ both are $C^*$-simple.
\end{proof}

\begin{question}
Let $H$ be a normal subgroup of $G$ and consider the following two properties:
\begin{itemize}
\item[(i)] $(H,\sigma)$ and $(C_G^\sigma(H),\sigma)$ are both $C^*$-simple.
\item[(ii)] $(G,\sigma)$ is $C^*$-simple.
\end{itemize}
Under which assumptions (other than $\sigma=1$ and those imposed in Corollary \ref{FC-sigma-inv}) do we have that (i)~$\Rightarrow$~(ii), or that (i)~$\Leftrightarrow$~(ii) ?
\end{question}

\section*{Acknowledgements} We are very grateful to the referee for carefully reading the manuscript and for pointing out several places where it could be improved.

\bibliographystyle{plain}
\bibliography{relative-simplicity}

\end{document}